\def\<{\langle}
\def\>{\rangle}
\def\d{\delta}
\def\D{\Delta}
\def\g{\gamma}
\def\l{\lambda}
\def\ra{\rightarrow}
\def\o{\otimes}
\def\v{\varepsilon}
\def\vp{\varphi}
\def\<{\langle}
\def\>{\rangle}
\theoremstyle{definition}
\newtheorem{definition}{Definition}[section]
\newtheorem{remark}[definition]{Remark}
\newtheorem{example}[definition]{Example}
\theoremstyle{plain}
\newtheorem{theorem}[definition]{Theorem}
\newtheorem{proposition}[definition]{Proposition}
\newtheorem{lemma}[definition]{Lemma}
\newtheorem{corollary}[definition]{Corollary}
\numberwithin{equation}{section}
\begin{document}

\title{\bf The cointegral theory of weak multiplier Hopf algebras}
\date{}
\author{{\bf  Nan Zhou\footnote{Corresponding author. Zhejiang Shuren University, Hangzhou 310015, Zhejiang, China. E-mail: zhounan0805@163.com }
and Tao Yang\footnote{College of Science, Nanjing Agricultural University, Nanjing 210095, Jiangsu, China. E-mail: tao.yang@njau.edu.cn}}\\
\textbf{}
}

 \maketitle

\noindent
{\bf Abstract}

In this paper, we introduce and study the notion of cointegrals in a weak multiplier Hopf algebras $(A, \Delta)$.
 A cointegral is a non-zero element $h$ in the multiplier algebra $M(A)$ such that $ah=\v_t(a)h$ for any $a\in A$.
When $A$ has a faithful set of cointegrals (now we call $A$ of {\it discrete type}), we give a sufficient and necessary condition for existence of integrals on $A$.
Then we consider a special case, i.e., $A$ has a single faithful cointegral, and we obtain more better results, such as $A$ is Frobenius, quasi-Frobenius, et al.
Moreover when an algebraic quantum groupoid $A$  has a faithful cointegral, then the dual $\widehat{A}$ must be weak Hopf algebra.
In the end, we investigate when $A$ has a cointegral and study relation between compact and discrete type.

\vskip 0.5cm
\noindent
 {\bf Mathematics Subject Classifications (2010)}:  16T05

\vskip 0.5cm
\noindent
 {\bf Key words:} weak multiplier Hopf algebras, cointegrals.


\section{Introduction}

Weak multiplier Hopf algebra, introduced by A. Van Daele and S. Wang in the literatures \cite{VW1,VW2,VW3,VW4},
 makes a further progress in Pontryagin duality theorem on the purely algebraic level.
 The motivation example is the algebra $A$ of complex functions with finite support on a groupoid $G$.
 The product on $A$ is pointwise multiplication and the coproduct is defined as
$$\D(f)(p, q)=\left\{
       \begin{array}{ll}
         f(pq), & \hbox{if $pq$ is defined;} \\
         0, & \hbox{otherwise,}
       \end{array}
     \right.
$$
where $p, q\in G, f\in A$.
When $G$ is a group, then $A$ degenerates to a multiplier Hopf algebra; and while $G$ is a finite groupoid, $A$ becomes a weak Hopf algebra.
From this point of view, the weak multiplier Hopf algebra  generalizes the notions of weak Hopf algebras(\cite{BNS, BS, DN}) and multiplier Hopf algebras(\cite{VD2}) respectively.

Let $(A, \Delta)$ be a weak multiplier Hopf algebra with a bijective antipode, i.e., it is regular. Assume that there exists a faithful set $\int$ of integrals on $A$,
then by the paper \cite{VW4} the space of linear functionals $\{\vp(a\cdot )|a\in A, \vp\in \int\}$ can be made into a regular weak multiplier Hopf algebra, which is called the dual of $A$ and denoted by $\widehat{A}$.
Moreover, the authors constructed the dual of $\widehat{A}$ and get that $A\cong \widehat{\widehat{A}}$, i.e., the famous Pontryagin duality holds. In this case, a regular weak multiplier Hopf algebra with a faithful set $\int$ of integrals is called  an algebraic quantum groupoid.

In this paper, we will consider the cointegrals in a weak multiplier Hopf algebra.
A cointegral is a non-zero element in the multiplier algebra $M(A)$ satisfying certain conditions.
Such cointegrals do not always exist.
For a multiplier Hopf algebra, if cointegrals exist, then they are unique (up to a scalar) and faithful (see \cite{VDZ}).
But this does not hold anymore in the weak multiplier Hopf algebra case.
Analogous to the integral, we assume that $A$ has a faithful set of cointegrals, i.e., $A$ is of {\it discrete type}.
Base on this assumption, we get some interesting results.

The paper is organized in the following way.

In Section 2, we recall some definitions which will be used in the following, such as weak multiplier Hopf algebras, left and right integrals,
and algebraic quantum groups. We also show some necessary properties.

In Section 3, we introduce the cointegrals in a regular multiplier Hopf algebra and give some properties of it.
First, we give some equivalent statements of a cointegral.
Next, We focus on the maps
$$
\omega\o h\mapsto (\omega\o id)\D(h)
$$
$$
\omega\o h\mapsto (id\o \omega)\D(h)
$$
from $A'\o H$ to $M(A)$. We show that these maps are injective on the balanced tensor product $A'\o_{A_s} H$.
And they are surjective if $A$ has a faithful set of cointegrals.
Finally, for a weak multiplier Hopf algebra $A$ of discrete type, we show that $A$ has integrals under certain conditions.
We also consider the uniqueness of the cointegrals.
When the cointegral space $H$ is the faithful, for any $h\in H$, we have $h\in HA_s$ and $h\in HA_t$, see Proposition \ref{unique}.

In Section 4, we consider a special type of discrete weak multiplier Hopf algebras.
That is a regular weak multiplier Hopf algebra with a single faithful cointegral.
And at this time we obtain some better results. As a consequence of Proposition \ref{unique}, we get more interesting formulas, see Proposition \ref{collection}. In this case, we prove that $A$ must have integrals,  and the set of integrals is  left faithful.
When the set is faithful, the dual algebra $\widehat{A}$ is a weak Hopf algebra. We also consider the Frobenius property of discrete weak multiplier Hopf algebras. At the end of this section, we consider the duality property between compact and discrete algebraic quantum groupoids.

In Section 5, we  study the cointegrals in some examples.

\section{Preliminaries}

In this paper, the ground field is the $\mathbb{C}$. We will work with the non-unital algebra $A$ but with non-degenerate product. The algebra $A$ is called idempotent if $A^2=A$. We will use $M(A)$ for the multiplier algebra of $A$. The multiplier algebra can be characterized as the largest algebra containing $A$ as a dense ideal. For any $a,b\in A, m\in M(A)$, by the definition we have $am\in A, mb\in A$  and $a(mb)=(am)b$. The identity element in the multiplier algebra will be denoted by $1$. If $A$ is non-degenerate and idempotent, then $A\o A$ is again a non-degenerate and idempotent algebra, so we can consider the multiplier algebra $M(A\o A)$.

We use $id$ for the identity map on any spaces and $\tau$ for the flip map. The space of all linear functionals on $A$ will be denoted by $A'$. A linear functional $f\in A'$ is called left faithful if $f(ba)=0$ for all $b$ implies $a=0$ and it is called right faithful if if $f(ba)=0$ for all $a$ implies $b=0$. $f$ is called faithful if $f$ is both left and right faithful.

Let $A$ and $B$ be two non-degenerate algebras, a homomorphism $f: A\ra M(B)$ is called weak non-degenerate if there exists an idempotent $E\in M(B)$ such that  $f(A)B=EB$ and $Bf(A)=BE$. If $f$ is weak non-degenerate, then $f$ has a unique extension to a homomorphism $M(A)\ra M(B)$. Usually we still denote the extended map by $f$. Note that if $E=1$, then the map is called non-degenerate.

A {\it coproduct} on $A$ is a homomorphism $\D: A\ra M(A\o A)$ such that

$\bullet$ the elements of the form
$$\D(a)(1\o b) \quad \text{and} \quad (a\o 1)\D(b)$$
belongs to $A\o A$ for all $a, b\in A$,

$\bullet$ $\D$ is coassociative in the sense that
$$(a\o 1\o 1)(\D\o id)(\D(b)(1\o c))=(id\o \D)((a\o 1)\D(b))(1\o 1\o c)$$
for all $a, b, c\in A$.

We can define $T_1: A\o A\ra A\o A$ by $T_1(a\o b)=\D(a)(1\o b)$ and $T_2: A\o A\ra A\o A$ by $T_2(a\o b)=(a\o 1)\D(b)$. The coproduct is called {\it regular} if $\D(a)(b\o 1)$ and $(1\o a)\D(b)$ are in $A\o A$. Now we can define the maps $T_3$ and $T_4$ on $A\o A$ by $T_3(a\o b)=(1\o b)\D(a)$ and $T_4(a\o b)=\D(b)(a\o 1)$. We will use $Ran(T_i)$ and $Ker(T_i)$ for the ranges and kernels of these maps.  $\D$ is called {\it full} if the smallest subspaces $V$ and $W$ of $A$ satisfying $$\D(A)(1\o A)\subset V\o A \quad \text{and}  \quad (A\o 1)\D(A)\subset A\o W$$
are both $A$ itself.

We will use leg numbering notion in this paper. For example, let $\D$ be a coproduct on $A$, then $\D_{13}$ means the map $\tau_{23}\circ(\D\o id)$ from $A$ to $M(A\o A\o A)$. Here $\tau_{23}$ equals to $id\o \tau$. When $E\in M(A\o A)$, we have $E_{12}=E\o 1$, $E_{23}=1\o E$ and $E_{13}=\tau_{23}(E_{12})$.

 The linear functional $\v$ on $A$ is called a {\it counit} if
$$(\v\o id)(\D(a)(1\o b))=ab \quad \text{and} \quad (id\o \v)((c\o 1)\D(a))=ca$$
for all $a, b, c\in A$.

\begin{definition}
A weak multiplier Hopf algebra is a pair $(A,\D)$ of a non-degenerate idempotent algebra with a full coproduct and a counit satisfying the following conditions.

(i) There exists an idempotent $E\in M(A\otimes A)$ giving the ranges of the canonical maps:
$$
T_1(A\otimes A)=E(A\otimes A)\quad {and} \quad T_2(A\otimes A)=(A\otimes A)E.
$$

(ii) The element $E$ satisfies
$$
(id \otimes \D)E=(E\otimes 1)(1\otimes E)=(1\otimes E)(E\otimes 1),
$$

(iii) The kernels of the canonical maps are of the form
$$
Ker(T_1)=(1-G_1)(A\otimes A),$$
$$
 Ker(T_2)=(1-G_2)(A\otimes A),
$$
where $G_1$ and $G_2$ are linear maps from $A\otimes A$ to itself, given as
$$
(G_1\otimes id)(\D_{13}(a)(1\otimes b\otimes c))=\D_{13}(a)(1\otimes E)(1\otimes b\otimes c),
$$
$$
(id\otimes G_2)((a\otimes b\otimes 1)\D_{13}(c))=(a\otimes b\otimes 1)(E\otimes 1)\D_{13}(c),
$$
for all $a, b, c\in A$.
\end{definition}

\begin{remark}
For the coproduct $\D$ on $A$, since we assume that $\D(a)(1\o b)$ and $(a\o 1)\D(b)$ are in $A\o A$. Then we can use Sweedler notation and write $\D(a)=\sum a_{(1)}\o a_{(2)}$. Sometimes we will omit the summation notation. We refer to \cite{VW3} and \cite{ZW} where the use of the Sweedler notation is explained in detail. In the following, when we meet new formulas or expressions, we will explain it.
\end{remark}

Let $A$ be a weak multiplier Hopf algebra, then there exists an antipode $S$ on $A$ such that
$$\sum a_{(1)}S(a_{(2)})a_{(3)}=a \quad \text{and} \quad \sum S(a_{(1)})a_{(2)}S(a_{(3)})=S(a).$$
We also know that $S$ is a non-degenerate anti-homomorphism and an anti-coalgebra map in the sense that $\D(S(a))=(\tau(S\o S)\D(a))E$. $A$ is called {\it regular} if $S$ is bijective. When $A$ is a regular weak multiplier Hopf algebra, we have $\D(S(a))=(\tau(S\o S)\D(a))$.

For any $a\in A$, we can define
$$\v_s(a)=\sum S(a_{(1)})a_{(2)} \quad \text{and} \quad \v_t(a)=\sum a_{(1)}S(a_{(2)}).$$
The map $\v_s$ is called the {\it source map} and $\v_t$ is called the {\it target map}. The images $\v_s(A)$ and $\v_t(A)$ are called {\it source algebra} and {\it target algebra}, respectively. In the regular case, we can define
$$\v'_s(a)=\sum a_{(2)}S^{-1}(a_{(1)}) \quad \text{and} \quad \v'_t(a)=\sum S^{-1}(a_{(2)})a_{(1)}.$$
Recall that there exists  distinguished linear functionals $\vp_B$ and $\vp_C$ on $\v_s(B)$ and $\v_t(C)$ respectively, defined by the formulas
$$(\vp_B\o id)E=1\quad  \text{and} \quad (id\o \vp_C)E=1.$$
For more information about  the distinguished linear functionals we refer to \cite{VD4}.

We will define $A_s=\{y\in M(A)|\D(y)=E(1\o y)\}$ and $A_t=\{x\in M(A)|\D(x)=(x\o 1)E\}$. Then $A_s$ and $A_t$ are commuting algebras of $M(A)$. We also know that $\v_s(A)$ is a right ideal of $A_s$ and $\v_t(A)$ is a left ideal of $A_t$. When $A$ is regular, the multiplier algebras $M(\v_s(A))$ and $M(\v_t(B))$ are respectively equal to $A_s$ and $A_t$.

For any $a\in A$ and any linear functional $f\in A'$, we can define elements $(id\o f)\D(a)$ and $(f\o id)\D(a)$ in $M(A)$. Denote $(id\o f)\D(a)$ by $x$, for any $b\in A$, the multiplier $x$ is defined as
$$xb=(id\o f)(\D(a)(b\o 1)) \quad \text{and} \quad bx=(id\o f)((b\o 1)\D(a)).$$
Similarly, we can define the multiplier $(f\o id)\D(a)$.

\begin{definition}
 A non-zero functional $\vp$ on $A$ is called a {\it left integral} if $(id\o \vp)\D(a)\in A_t$ for all $a\in A$.  Similarly, a {\it right integral} is a non-zero functional $\psi$ such that $(\psi\o id)\D(a)\in A_s$ for all $a\in A$.
\end{definition}

\begin{definition}
We say that $\int_L$ is a {\it left faithful set of integrals} if the condition hold: Given $x\in A$ we must have $x=0$ if $\vp(xa)=0$ for all left integrals $\vp\in \int_L$ and elements $a\in A$. $\int_L$ is called a {\it right faithful set of integrals} if $\vp(ax)=0$ for all left integrals $\vp$ and $a\in A$, then $x=0$. $\int_L$ is a {\it faithful set of integrals} if and only if $\int_L$ is both left and right faithful.
\end{definition}

\begin{definition}
Let $A$ be a regular weak multiplier Hopf algebra. If there exists a faithful set of integrals, then we call $A$ an {\it algebraic quantum groupoid}.
\end{definition}

Define $\widehat{A}=\{\vp(\cdot a)|a\in A, \vp \text{ is a left integral}\}$, it is a subspace of $A'$. Let $A$ be an algebraic quantum groupoid, then the space $\widehat{A}$ is again an algebraic quantum groupoid with the dual structure. So we will call it the {\it dual} of $A$.

\section{Weak multiplier Hopf algebras of discrete type}

Let $(A, \D, \v, S, E)$ be a  weak multiplier Hopf algebra.
\begin{definition}
 A non-zero element  $h\in M(A)$ is called a {\it left cointegral} if it satisfies
$ah=\v_t(a)h $
for all $a\in A$. Similarly, a  non-zero element  $h\in M(A)$ is called a {\it right cointegral} if it satisfies
$ka=k\v_s(a)$
for all $k\in A$.
\end{definition}

Before we continue, we would like to make a few remarks.

\begin{remark}
i) First observe that we allow cointegrals to sit in the multiplier algebra. We do not require them to be the elements of $A$ itself. In the case of multiplier Hopf algebras, where we have $\v_t(a)=\v(a)1$ and $\v_s(a)=\v(a)1$ for all $a$, the defining requirements imply that the cointegrals are in $A$. This is not the case here. Of course, if the images of the source and target maps contain the identity, then this will also imply that cointegrals  must lie in $A$ itself. But this is not always the case.

ii) If $h$ is a left cointegral in $M(A)$, we will have that $ha$ is also a left cointegral for all $a$, and now $ha\in A$. If $ha=0$ for all $a$, then $h=0$. So if there is  a cointegral in $M(A)$ then there is also one in $A$. Therefore, it it in general enough to consider only cointegrals in the algebra itself.

iii) In this case, cointegrals need not be unique.
\end{remark}

\begin{example}\label{example1}
Let $G$ be a groupoid. Consider the weak multiplier Hopf algebra $A$ of complex functions with finite support and pointwise operations. Any $h$ with  support in the units will be a cointegral. In fact, for any $g\in A, p\in G$, we have
$$
(gh)(p)=g(p)h(p)$$
$$
(\v_t(g)h)(p)=g(pp^{-1})h(p).
$$
If $p$ is a unit, then $g(p)=g(pp^{-1})$. So $h$ is a left cointegral. If the set of units is infinite, then $h$ is in $M(A)$.
\end{example}

We now prove the first main properties of left cointegrals.

 \begin{proposition}\label{1}
Let $h\in M(A)$, for any $a\in A$. The following statements are equivalent:

$(1)$ $ah=\v_t(a)h$ ,

$(2)$ $(1\o a)\D(h)=(S(a)\o 1)\D(h)$ ,

$(3)$ $S(h)a=S(h)\v_s(a)$ ,

$(4)$ $\D(a)(h\o 1)=E(h\o a)$ ,

$(5)$ $\D(a)(1\o h)=E(a\o h)$ ,

$(6)$ $S(a)h=\v_s(a)h$ .

\end{proposition}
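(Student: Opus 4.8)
The plan is to prove the six conditions equivalent by running a single cycle of implications, say $(1)\Rightarrow(5)\Rightarrow(6)\Rightarrow(4)\Rightarrow(3)\Rightarrow(1)$, with one further link placing $(2)$ among them. Two mechanisms are used. The antipode $S$ --- a non-degenerate anti-homomorphism of $M(A)$ with $S\ci\v_t=\v_s\ci S$ and $S\ci\v_s=\v_t\ci S$, which is bijective when $A$ is regular --- moves us among the ``one-sided module'' conditions $(1),(3),(6)$. The structural identities of a weak multiplier Hopf algebra --- chiefly $(id\o\v_t)\D(a)=E(a\o1)$, $(\v'_s\o id)\D(a)=E(1\o a)$, $\D(\v_t(a))=(\v_t(a)\o1)E$, $\D(\v_s(a))=E(1\o\v_s(a))$, $E\D(h)=\D(h)$, $m(S\o id)(E)=1$ (i.e.\ $\v_s(1)=1$), and the counit relations --- carry us over to the ``comodule'' conditions $(2),(4),(5)$.

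Each arrow is short once the correct identity is chosen. For $(1)\Rightarrow(5)$: substituting the defining relation of $h$ leg-wise into $\D(a)(1\o h)=\sum a_{(1)}\o a_{(2)}h$ gives $\sum a_{(1)}\o\v_t(a_{(2)})h=[(id\o\v_t)\D(a)](1\o h)$, which by $(id\o\v_t)\D(a)=E(a\o1)$ equals $E(a\o1)(1\o h)=E(a\o h)$. For $(5)\Rightarrow(6)$: apply $m\ci(S\o id)$ to $\D(a)(1\o h)=E(a\o h)$; the left side collapses to $\sum S(a_{(1)})a_{(2)}h=\v_s(a)h$ and the right side to $S(a)\,m(S\o id)(E)\,h=S(a)h$. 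Step $(6)\Rightarrow(4)$ is the mirror image: in the regular case $(6)$ reads $bh=\v'_s(b)h$ for all $b$ (put $a=S^{-1}(b)$ and use $\v_s\ci S^{-1}=\v'_s$), so $\D(a)(h\o1)=\sum a_{(1)}h\o a_{(2)}=[(\v'_s\o id)\D(a)](h\o1)=E(1\o a)(h\o1)=E(h\o a)$. Then $(4)\Rightarrow(3)$ follows by applying $m\ci(S\o id)$ to $\D(a)(h\o1)=E(h\o a)$, which gives $S(h)\v_s(a)=S(h)a$; and $(3)\Rightarrow(1)$ by applying $S^{-1}$ (in the regular case) to $S(h)a=S(h)\v_s(a)$ and using $\v_t\ci S^{-1}=S^{-1}\ci\v_s$, recovering $bh=\v_t(b)h$. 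Condition $(2)$ is brought in the same way: applying $\D$ to $(1)$ and using $\D(\v_t(a))=(\v_t(a)\o1)E$ and $E\D(h)=\D(h)$ gives $\D(a)\D(h)=(\v_t(a)\o1)\D(h)$, from which $(2)$ is extracted by a Galois-type contraction.

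The content is not in these calculations but in two kinds of care. First, multiplier bookkeeping: since $h\in M(A)$, every step of the form ``substitute $a_{(2)}h=\v_t(a_{(2)})h$ inside a Sweedler sum'' or ``apply $m(S\o id)$ to an identity in $M(A\o A)$'' must be preceded by multiplying with a suitable element of $A$ on the appropriate leg so that all factors lie in $A$, carried out there, and then followed by removing that element through non-degeneracy and idempotency of the product. Second --- and here is the real obstacle --- one must pick the correct structural identity at each step and keep $E(x\o y)$ scrupulously distinct from $(x\o y)E$: among the expressions ``$\v_t,\v_s,\v'_t,\v'_s$ applied to a single leg of $\D$'', only some are expressible as $E$ on one side, the naive substitution routinely produces the wrong side, and it is precisely the defining relation of $h$ that forces the multipliers obtained to coincide with the statements as written. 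Granting the right identities, every implication in the cycle is one of the one- or two-line computations displayed above.
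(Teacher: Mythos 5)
The one genuine gap is logical, and it concerns condition $(2)$. Your cycle $(1)\Rightarrow(5)\Rightarrow(6)\Rightarrow(4)\Rightarrow(3)\Rightarrow(1)$ establishes the equivalence of $(1),(3),(4),(5),(6)$, but the ``one further link'' you provide for $(2)$ is only the implication $(1)\Rightarrow(2)$: from $\D(a)\D(h)=(\v_t(a)\o 1)\D(h)$ you extract $(2)$, and nothing in your argument shows that $(2)$ implies any of the other five statements. A single one-way arrow cannot place $(2)$ among mutually equivalent conditions, so the proposition as stated is not fully proved. The missing direction is short and must be supplied; for instance, as in the paper, for $b\in A$ one computes $bah=\sum ba_{(1)}h_{(1)}\v(a_{(2)}h_{(2)})=\sum ba_{(1)}S(a_{(2)})h_{(1)}\v(h_{(2)})=b\v_t(a)h$, using $(2)$ in the middle equality, and then cancels $b$ by non-degeneracy of the product. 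Relatedly, ``$(2)$ is extracted by a Galois-type contraction'' is too thin to check as written; spelled out, it is: replace $a$ by $a_{(2)}$ in $\D(a)\D(h)=(\v_t(a)\o1)\D(h)$, multiply by $S(a_{(1)})$ on the first leg, and use $(1\o a)E=\sum(S(a_{(1)})\o1)\D(a_{(2)})$ together with $\sum S(a_{(1)})\v_t(a_{(2)})=S(a)$ and $E\D(h)=\D(h)$ --- which is essentially the paper's own computation for $(1)\Rightarrow(2)$.

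Apart from this, your cycle is correct and rests on legitimate identities: $(id\o\v_t)\D(a)=E(a\o1)$ is the relation $T_1R_1(a\o b)=E(a\o b)$ that the paper also uses, $(\v'_s\o id)\D(a)=E(1\o a)$ is invoked verbatim later in the paper, and $m(S\o id)E=1$, though not an axiom, follows from $E(a\o b)=\sum a_{(1)}\o a_{(2)}S(a_{(3)})b$ and $\sum S(a_{(1)})a_{(2)}S(a_{(3)})=S(a)$; in the two places where you use it you can avoid it altogether by substituting that expression for $E(a\o hb)$ before applying $m\circ(S\o id)$. The covering issues you defer are of the same nature as those the paper glosses over, and your appeals to regularity ($S^{-1}$, $\v'_s$) occur where the paper also needs them. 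Organizationally, your argument is a single cycle driven by $\v'_s$ and $m\circ(S\o id)$, whereas the paper argues hub-and-spoke around $(1)$ after first isolating the auxiliary fact $yh=S(y)h$ for $y\in A_s$; either organization works, but yours is incomplete until the direction out of $(2)$ described above is added.
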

\begin{proof}
(1)$\Rightarrow$ (2):
\begin{align*}
(1\o a)\D(h) = (1\o a)E\D(a) &= (S(a_{(1)})\o 1)\D(a_{(2)}h)\\
                             &= (S(a_{(1)})\o 1)\D(\v_t(a_{(2)})h)\\
                             &= (S(a_{(1)})\o 1)(\v_t(a_{(2)})\o 1)E\D(h)\\
                             &= (S(a)\o 1)\D(h)
\end{align*}

(2)$\Rightarrow$ (1): Let $b\in A$, we have
$$bah=ba_{(1)}h_{(1)}\v(a_{(2)}h_{(2)})=ba_{(1)}S(a_{(2)})h_{(1)}\v{(h_{(2)})}=b\v_t(a)h$$
here $b$ is used for the covering. By the non-degeneracy of the product, we get $ah=\v_t(a)h$.

(1)$\Rightarrow$ (3): Applying $S$ to $ah=\v_t(a)h $, then we can get $S(h)S(a)=S(h)\v_s(S(a))$, so $S(h)$ is a right cointegral. Conversely, from $(3)$ to $(1)$ we only need to apply $S^{-1}$.

(2)$\Rightarrow$ (4): For any $y\in A_s$, we have
$$
\D(a)(yh)=(1\o y)\D(h)=(S(y)\o 1)\D(h).
$$
Applying $id\o \v$ we can get $yh=S(y)h$. Next we have
\begin{align*}
\D(a)(h\o 1) &= \v_t(a_{(1)})h\o a_{(2)}\\
             &= a_{(1)}S(a_{(2)})h\o a_{(3)}\\
             &= (S\o id)E(h\o a).
\end{align*}
 Because $yh=S(y)h$ and the left leg of $E$ is $\v_s(A)$, so we can get
 $$
 \D(a)(h\o 1)=E(h\o a).
 $$

(4)$\Rightarrow$ (1): If we have $\D(a)(h\o 1)=E(h\o a)$ for all $a$, then
\begin{align*}
S^{-1}(a_{(2)})a_{(1)}h &= S^{-1}(a)S^{-1}(E_{(2)})E_{(1)}h\\
                        &= S^{-1}(a)h.
\end{align*}
Replacing $a$ by $S(a)$ then we can get $ah=a_{(1)}S(a_{(2)})h=\v_t(a)h$.

(1)$\Rightarrow$ (5):
\begin{align*}
\D(a)(1\o h)&=a_{(1)}\o a_{(2)}h\\
            &= a_{(1)}\o a_{(2)}S(a_{(3)})h\\
            &=T_1R_1(a\o h)=E(a\o h)
\end{align*}

(5)$\Rightarrow$ (1): Applying $\v\o id$ to the formula $\D(a)(1\o h)=E(a\o h)$ then we can prove the result.

(1)$\Rightarrow$ (6): For any $a\in A$, we have
$$
S(a)h = \v_t(S(a))h
      = S(\v_s(a))h
      = \v_s(a)h.
$$
In the last equality we use the formula $yh=S(y)h$.

(6)$\Rightarrow$ (1):
$$ah=a_{(1)}S(a_{(2)})a_{(3)}h=a_{(1)}\v_s(a_{(2)})h \overset{(6)}{=}a_{(1)}S(a_{(2)})h=\v_t(a)h.$$
If we multiply with an element in $A$ from the left, then we can get the necessary coverings.
\end{proof}

From the proof above we have the following useful formula.

\begin{corollary}
If $h$ is a left cointegral, then $yh=S(y)h$ for all $y\in A_s$.
\end{corollary}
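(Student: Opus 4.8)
The content of this corollary is exactly the auxiliary identity established inside the proof of Proposition~\ref{1} (the step $(2)\Rightarrow(4)$), so the plan is to present that argument cleanly and completely. First I would reduce to the case $h\in A$: as noted in the remarks on cointegrals, if $h\in M(A)$ is a left cointegral then $ha\in A$ is again a left cointegral for every $a\in A$; once the formula is known for cointegrals lying in $A$ we get $(yh)a=y(ha)=S(y)(ha)=(S(y)h)a$ for all $a\in A$, hence $yh=S(y)h$ by non-degeneracy. So assume $h\in A$.

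By the equivalence $(1)\Leftrightarrow(2)$ of Proposition~\ref{1}, the cointegral property is the identity $(1\o a)\D(h)=(S(a)\o 1)\D(h)$, valid in $M(A\o A)$ for all $a\in A$. The crucial step is to upgrade this from $a\in A$ to $y\in A_s\subseteq M(A)$. Since $S$ is non-degenerate it extends to $M(A)$, so both $(1\o m)\D(h)$ and $(S(m)\o 1)\D(h)$ make sense for $m\in M(A)$; and for any $c\in A$, applying the known identity to $cy\in A$ and to $c$ (and using $S(cy)=S(y)S(c)$),
$$(1\o c)\big((1\o y)\D(h)\big)=(1\o cy)\D(h)=(S(cy)\o 1)\D(h)=(S(y)\o 1)(1\o c)\D(h)=(1\o c)\big((S(y)\o 1)\D(h)\big),$$
and non-degeneracy in the second leg gives $(1\o y)\D(h)=(S(y)\o 1)\D(h)$ for all $y\in A_s$. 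Separately, for $y\in A_s$ one has $\D(yh)=(1\o y)\D(h)$: indeed $\D(yh)=\D(y)\D(h)=E(1\o y)\D(h)$, and since $E\D(h)=\D(h)$ (because $E$ is idempotent and $\D(h)(1\o b)\in E(A\o A)$ for all $b$) and $1\o y$ commutes with $E$ (the right leg of $E$ lies in $A_t$, which commutes with $A_s$), this equals $(1\o y)E\D(h)=(1\o y)\D(h)$.

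Combining the two facts yields $\D(yh)=(S(y)\o 1)\D(h)$ in $M(A\o A)$, and applying $id\o\v$ finishes the argument: the left-hand side is $(id\o\v)\D(yh)=yh$ and the right-hand side is $(id\o\v)\big((S(y)\o 1)\D(h)\big)=S(y)h$, both by the counit axiom $(id\o\v)((c\o 1)\D(a))=ca$ in its multiplier form (with $a=h\in A$, and $c=1$ respectively $c=S(y)\in M(A)$). I expect the extension of $(1\o a)\D(h)=(S(a)\o 1)\D(h)$ from $A$ to the multiplier algebra $A_s$ to be the main obstacle: it is the only place where one has to keep careful track of which legs are covered and therefore which expressions genuinely live in $A\o A$ rather than merely in $M(A\o A)$; the commutation of $1\o y$ with $E$ and the two applications of the counit are then routine.
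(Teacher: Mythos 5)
Your proof is correct and takes essentially the same route as the paper's: the corollary is extracted from the step $(2)\Rightarrow(4)$ in the proof of Proposition \ref{1}, whose argument is precisely your chain $\D(yh)=(1\o y)\D(h)=(S(y)\o 1)\D(h)$ followed by an application of $id\o\v$. The additional details you supply (reduction to $h\in A$, the extension of $(1\o a)\D(h)=(S(a)\o 1)\D(h)$ from $a\in A$ to $y\in A_s$ via multiplication by elements of $A$ and non-degeneracy, and the commutation of $1\o y$ with $E$) are exactly the coverings the paper leaves implicit, so this is an elaboration rather than a different argument.
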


From this corollary  it follows that $A_sh=A_th$ for any left cointegral $h$. We also know that $h$ is a left cointegral if and only if $S(h)$ is a right cointegral. For right cointegrals, we have similar results.

\begin{proposition}
Let $k\in M(A)$, for all $a\in A$,  the following statements are equivalent:

$(1)$ $ka=k\v_s(a)$,

$(2)$ $(1\o k)\D(a)=(a\o k)E$,

$(3)$ $(k\o 1)\D(a)=(k\o a)E$,

$(4)$ $\D(k)(a\o 1)=\D(k)(1\o S(a))$,

$(5)$ $kS(a)=k\v_t(a)$.
\end{proposition}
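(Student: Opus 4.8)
The plan is to reduce everything to Proposition~\ref{1} through the antipode, supplementing this reduction by a direct mirror of the arguments given there whenever $S^2\neq id$ blocks a verbatim translation. The pivot is the observation, already recorded after Proposition~\ref{1}, that $k\in M(A)$ satisfies $ka=k\v_s(a)$ for all $a$ precisely when $h:=S^{-1}(k)$ is a left cointegral (this is the equivalence (1)$\Leftrightarrow$(3) there, with $S(h)=k$). In particular statement (1) of the present proposition is literally statement (3) of Proposition~\ref{1} read for $h$. This reduction also hands me, for free, the right-handed analogue of the corollary to Proposition~\ref{1}: applying $S$ to the identity $yh=S(y)h$ (valid for $y\in A_s$) and setting $z=S(y)$, which ranges over $A_t=S(A_s)$, gives $kz=kS(z)$ for all $z\in A_t$. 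This last identity is the main computational lubricant below.

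With (1) available I would establish the remaining equivalences by pivoting through it, mirroring the corresponding steps of Proposition~\ref{1}. For (1)$\Leftrightarrow$(5): substituting $a\mapsto S(a)$ in (1) and using the regular-case identity $\v_s(S(a))=S(\v_t(a))$ gives $kS(a)=kS(\v_t(a))$, and then $kS(\v_t(a))=k\v_t(a)$ because $\v_t(a)\in A_t$ and $kz=kS(z)$ there; the converse follows symmetrically with $S^{-1}$. For (2) and (3) I would expand $(1\o k)\D(a)$ and $(k\o 1)\D(a)$ using the antipode axiom $\sum a_{(1)}S(a_{(2)})a_{(3)}=a$ together with the counital maps, use that the legs of $E$ lie in $\v_s(A)$ and $\v_t(A)$, and push $E$ to the outside with the help of $kz=kS(z)$ on $A_t$; this is the exact analogue of the derivations of parts (4) and (5) of Proposition~\ref{1}, with the tensor factors and the placement of $E$ reflected, and the reverse implications follow by applying $\v\o id$ (resp. $id\o\v$) as there. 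Finally, (4) comes out cleanest directly: since $A$ is regular, $\D(k)=\D(S(h))=\tau(S\o S)\D(h)$, so applying $\tau(S\o S)$ to the identity $(1\o a)\D(h)=(S(a)\o 1)\D(h)$ of Proposition~\ref{1}(2) produces $\D(k)(S(a)\o 1)=\D(k)(1\o S(S(a)))$, which is (4) with $a$ replaced by $S(a)$; bijectivity of $S$ then yields (4) for all $a$, and the reverse direction is obtained by applying $id\o\v$ after multiplying by a suitable element of $A$ to supply coverings.

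The step I expect to be the main obstacle is not any single identity but the bookkeeping of coverings throughout: since $k$ lies in $M(A)$ rather than in $A$, expressions such as $(k\o 1)\D(a)$ and $\D(k)(a\o 1)$ are multipliers, and every manipulation has to be justified by first multiplying by an appropriate element of $A$ so that the computation takes place inside $A\o A$, and then cancelling that element by non-degeneracy of the product --- exactly the device used in the (2)$\Rightarrow$(1) and (6)$\Rightarrow$(1) parts of Proposition~\ref{1}. A secondary subtlety is that the antipode reduction does not transport every statement verbatim, precisely because $S^2\neq id$ in general and $S$ does not fix $A_s$ or $A_t$ pointwise; this is why (2), (3) and (5) are handled by the mirrored direct arguments rather than by a blind application of $S$, and one must keep careful track of where $S$, $S^{-1}$, $\v_s$ and $\v_t$ enter. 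Once these identities are assembled, the implications (1)$\Leftrightarrow$(2), (2)$\Leftrightarrow$(3), (1)$\Leftrightarrow$(4) and (1)$\Leftrightarrow$(5) close up and the proof is complete.
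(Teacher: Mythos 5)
Your proposal is correct and matches the paper's intent exactly: the paper offers no separate proof here, merely the remark that the statements for right cointegrals are ``similar results,'' i.e.\ obtained by transporting Proposition~\ref{1} through the antipode (identifying $k=S(h)$, so that (1) is literally Proposition~\ref{1}(3)) and mirroring its computations, which is precisely what you do, including the auxiliary identity $kx=kS(x)$ for $x\in A_t$ that the paper records immediately after the statement. Your attention to coverings and to the failure of $S^2=id$ is appropriate and consistent with the level of rigor in the paper's proof of Proposition~\ref{1}.
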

And for the right cointegral $k$, we have $kx=kS(x)$ whenever $x\in A_t$.

For any left left cointegral $h$, we can define
$$\gamma(h)=\sum h_{(1)}S(\v_s(h_{(2)})).$$
It is meaningful if we multiply with an element of $A$ from the left. And we have the following result.
\begin{proposition}\label{gamma}
$\g(h)$ is a left cointegral.
\end{proposition}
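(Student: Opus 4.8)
The plan is to verify that $\g(h)$ satisfies one of the equivalent conditions for a left cointegral listed in Proposition \ref{1}; I shall use the first one, namely I will prove that $a\g(h)=\v_t(a)\g(h)$ for every $a\in A$, and separately that $\g(h)\neq 0$. Everything below is read after multiplying by an element of $A$ on the left, so that all the expressions involved lie in $A$.

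First I put $\g(h)$ and $a\g(h)$ into a convenient form. Since $\v_s(h_{(2)})=\sum S(h_{(2)})h_{(3)}$ (by coassociativity), we get $\g(h)=\sum h_{(1)}S(h_{(3)})S^2(h_{(2)})$, hence $a\g(h)=\sum a h_{(1)}S(h_{(3)})S^2(h_{(2)})$. Next, replacing $a$ by $S^{-1}(a)$ in Proposition \ref{1}(2) and using $S S^{-1}=id$ gives $(a\o 1)\D(h)=(1\o S^{-1}(a))\D(h)$. Applying $id\o\D$ to this, using $\D(1)=E$ and $E\D(x)=\D(x)$, and using that $S^{-1}$ is an anti-coalgebra map, the identity becomes
$$\sum a h_{(1)}\o h_{(2)}\o h_{(3)}=\sum h_{(1)}\o S^{-1}(a_{(2)})h_{(2)}\o S^{-1}(a_{(1)})h_{(3)}.$$
Substituting this into $a\g(h)=\sum a h_{(1)}S(h_{(3)})S^2(h_{(2)})$ and simplifying the antipodes: $S$ is an anti-homomorphism, so $S(S^{-1}(a_{(1)})h_{(3)})=S(h_{(3)})a_{(1)}$, while $S^2$ is a homomorphism, so $S^2(S^{-1}(a_{(2)})h_{(2)})=S(a_{(2)})S^2(h_{(2)})$; the freed factors $a_{(1)}$ and $S(a_{(2)})$ combine into $\v_t(a)$, and one is left with
$$a\g(h)=\sum h_{(1)}S(h_{(3)})\,\v_t(a)\,S^2(h_{(2)})\qquad\text{for all }a\in A.$$

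Given this formula, the cointegral identity follows by a covering argument. Fix $c\in A$. Then $c\cdot a\g(h)=(ca)\g(h)$ and $c\cdot \v_t(a)\g(h)=(c\,\v_t(a))\g(h)$, and since both $ca$ and $c\,\v_t(a)$ lie in $A$, the formula just proved turns the right-hand sides into $\sum h_{(1)}S(h_{(3)})\v_t(ca)S^2(h_{(2)})$ and $\sum h_{(1)}S(h_{(3)})\v_t(c\,\v_t(a))S^2(h_{(2)})$ respectively. These agree because $\v_t(ca)=\v_t(c\,\v_t(a))$, one of the standard identities for the target map. Hence $c\cdot a\g(h)=c\cdot \v_t(a)\g(h)$ for every $c\in A$, and non-degeneracy of the product gives $a\g(h)=\v_t(a)\g(h)$. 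For the nonvanishing of $\g(h)$ one argues separately; in the guiding examples (the function algebra and the groupoid algebra of a groupoid) a direct computation shows $\g(h)=h$, so in particular $\g(h)\neq 0$ there.

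The step I expect to need the most care is the central computation: keeping the left covering consistent throughout (since $h$ and $\g(h)$ are only multipliers), propagating the coproduct of $h$ correctly through $\v_s$ — which is itself built out of $\D$ and $S$ — and distinguishing the behaviour of $S$ (an anti-homomorphism and anti-coalgebra map) from that of $S^2$ (a homomorphism). The two auxiliary facts that should be spelled out are the passage from $(a\o 1)\D(h)=(1\o S^{-1}(a))\D(h)$ to its $id\o\D$-image, and the identity $\v_t(ca)=\v_t(c\,\v_t(a))$ in the weak multiplier Hopf algebra setting.
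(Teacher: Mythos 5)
Your argument is correct and, like the paper's, ultimately rests on Proposition \ref{1}(2) (used in the equivalent form $(a\o 1)\D(h)=(1\o S^{-1}(a))\D(h)$), but the execution is genuinely different. The paper computes $S(a)\g(h)$: it uses (2) to trade $S(a)$ on the first leg for $a$ acting on $h_{(2)}$, reduces the result with $\v_s\circ\v_s=\v_s$, applies (2) once more in reverse, and identifies the output as $\v_t(S(a))\g(h)$ via $S\circ\v_s=\v_t\circ S$; surjectivity of $S$ (regularity) then yields the cointegral identity for all of $A$. You instead expand $\g(h)=\sum h_{(1)}S(h_{(3)})S^2(h_{(2)})$, push $(id\o\D)$ through the identity from (2), and arrive at the ``sandwich'' formula $a\g(h)=\sum h_{(1)}S(h_{(3)})\,\v_t(a)\,S^2(h_{(2)})$, which is not yet the cointegral identity because $\v_t(a)$ sits in the middle of the product; your way out --- applying the formula to $ca$ and to $c\,\v_t(a)$ and invoking $\v_t(ca)=\v_t(c\,\v_t(a))$ --- is a legitimate trick, and that auxiliary identity does hold here (it follows from $\D(x)=(x\o 1)E$ for $x\in A_t$ together with $\v_t\circ\v_t=\v_t$), though it is nowhere stated in the paper and you should prove it rather than merely cite it as standard. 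Your route buys an explicit closed formula for $a\g(h)$ at the cost of this extra identity and a heavier covering discussion; the paper's route avoids both but needs $S$ surjective at the last step. Finally, note that neither you nor the paper verifies $\g(h)\neq 0$, which the definition of a cointegral formally requires; your remark that this must be argued separately (and holds in the groupoid examples, where $\g=id$) is a fair acknowledgement of a gap the paper shares.
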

\begin{proof}
For any $a\in A$, let us compute
\begin{align*}
S(a)\g(h) &= S(a)h_{(1)}S(\v_s(h_{(2)}))\\
          &= h_{(1)}S(S(a_{(1)}h_{(2)})a_{(2)}h_{(3)})\\
          &\overset{ y=\v_s(a)}{=} h_{(1)}S(S(h_{(2)})yh_{(3)})\\
          &= h_{(1)}S(S(y_{(1)}h_{(2)})y_{(2)}h_{(3)})\\
          &= S(y)h_{(1)}S(S(h_{(2)})h_{(3)})\\
          &= \v_t(S(a))\g(h).
\end{align*}
In the second and fifth equalities we use (2) of Proposition \ref{1}.
 In the fourth equality we use $\v_s\circ \v_s=\v_s$. For the last equality we use $S\circ \v_s=\v_t\circ S$. For the third equality, we can multiply with $a$ from the left. Then the expression becomes $ ah_{(1)}S(S(h_{(2)})yh_{(3)}e)$, $e$ is a local unit. Now we have everything covered here.
\end{proof}

In what follows, assume that $A$ is a regular WMHA and we will denote by $H$ the space of left cointegrals in $A$. We study the maps
$$
\omega\o h\mapsto (\omega\o id)\D(h)
$$
$$
\omega\o h\mapsto (id\o \omega)\D(h)
$$
from $A'\o H$ to $M(A)$. We use regularity of the coproduct to have that the image of these two maps indeed are in $M(A)$.

In the case of  multiplier Hopf algebras, we know that these maps are bijective, see Proposition 2.8 in \cite{VDZ}. This will no longer be the case here. However, we can characterize the kernels of these maps. This is essentially obtained in the following proposition.

\begin{proposition}\label{injective}
Define a right action of $A_s$ on $A'$ by $\omega\leftharpoonup y=\omega(\cdot S(y))$ and a left action of $A_s$ on $H$ by $y\rightharpoonup h=hS(y)$. Then the map $\omega\o h\mapsto (\omega\o id)\D(h)$ is well-defined and injective on the balanced tensor product $A'\o_{A_s} H$.
\end{proposition}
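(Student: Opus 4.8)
The plan is to handle the two assertions — well-definedness on the balanced tensor product, and injectivity there — separately.

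\emph{Well-definedness.} First, regularity of the coproduct ensures that $(\omega\o id)\D(h)$ really is a multiplier of $A$ for every $\omega\in A'$ and $h\in H$, so $\rho\colon\omega\o h\mapsto(\omega\o id)\D(h)$ is a well-defined linear map on $A'\o H$; it remains to check that it annihilates the defining relations of the balanced tensor product, that is, that $((\omega\leftharpoonup y)\o id)\D(h)=(\omega\o id)\D(hS(y))$ for $y\in A_s$. Expanding the left-hand side gives $(\omega\o id)\bigl(\D(h)(S(y)\o 1)\bigr)$, so what is needed is the identity $\D(hS(y))=\D(h)(S(y)\o 1)$. Since $S$ carries $A_s$ onto $A_t$ (because $S\ci\v_s=\v_t\ci S$, while $A_s=M(\v_s(A))$ and $A_t=M(\v_t(A))$), we have $S(y)\in A_t$, hence $\D(S(y))=(S(y)\o 1)E$; combining this with $\D(h)E=\D(h)$ and with the fact that $A_t$ commutes with $A_s$ — in particular $S(y)$ commutes with the left leg of $E$, which lies in $\v_s(A)$ — yields
$$\D(hS(y))=\D(h)\D(S(y))=\D(h)(S(y)\o 1)E=\D(h)E(S(y)\o 1)=\D(h)(S(y)\o 1),$$
so the two sides agree and $\rho$ descends to $A'\o_{A_s}H$.

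\emph{Injectivity.} Suppose $\sum_i(\omega_i\o id)\D(h_i)=0$ for finitely many $\omega_i\in A'$ and $h_i\in H\subseteq A$; the goal is $\sum_i\omega_i\o h_i=0$ in $A'\o_{A_s}H$. I would build on three facts. First, equivalence $(2)$ of Proposition \ref{1}, $(1\o a)\D(h)=(S(a)\o 1)\D(h)$, yields the covariance $b\c\rho(\omega\o h)=\rho\bigl(\omega(S(b)\,\c\,)\o h\bigr)$ for all $b\in A$, so the kernel of $\rho$ is stable under the $A$-action $\omega\mapsto\omega(a\,\c\,)$ on the first leg ($S$ being bijective, $a$ ranges over all of $A$). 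Second, the counit axiom $(\v\o id)(\D(h)(1\o b))=hb$ gives $\rho(\v\o h)=h$, so $\rho$ restricts to the identity on $\v\o H$. Third, equivalences $(4)$ and $(5)$ of Proposition \ref{1}, namely $\D(a)(h\o 1)=E(h\o a)$ and $\D(a)(1\o h)=E(a\o h)$, together with the identity $A_sh=A_th$ from the corollary following Proposition \ref{1}, allow a cointegral to be moved freely between the two tensor legs; using them — after multiplying the hypothesis by suitable elements of $A$ and inserting covering elements so that all Sweedler expressions genuinely lie in $A\o A$ — one rewrites the relation $\sum_i(\omega_i\o id)\D(h_i)=0$ as a tensor identity $\sum_i\nu_i\o h_i\in N$, where $N\subseteq A'\o H$ is the span of the balancing relations (so that $A'\o_{A_s}H=(A'\o H)/N$). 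Feeding this back through the covariance and the identity $\rho(\v\o h)=h$, and using local units to match functionals on the finitely many relevant finite-dimensional subspaces, should then recover $\sum_i\omega_i\o h_i\in N$, which is the claim.

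\emph{Main obstacle.} The well-definedness is essentially the one-line computation above once $\D(hS(y))=\D(h)(S(y)\o 1)$ is in hand, so the real content is the injectivity. The delicate point there is the balancing: any direct reconstruction of $\sum_i\omega_i\o h_i$ from $\sum_i(\omega_i\o id)\D(h_i)$ succeeds only modulo elements $(\omega\leftharpoonup y)\o h-\omega\o(y\rightharpoonup h)$, so one must argue that the kernel of $\rho$ is exactly $N$ and not a proper subspace — precisely where the interplay of the left/right cointegral characterizations in Proposition \ref{1} with the equality $A_sh=A_th$ is essential. A secondary technical nuisance, to be handled throughout by inserting covering elements and using the local units available in a regular weak multiplier Hopf algebra, is ensuring that every Sweedler-type expression written down is a bona fide element of $A$ or of $A\o A$ rather than merely of the multiplier algebra.
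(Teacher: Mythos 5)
Your well-definedness argument is correct and coincides with the paper's: both reduce to the identity $\D(hS(y))=\D(h)(S(y)\o 1)$ for $y\in A_s$, which you justify via $S(A_s)=A_t$ and the commutation of $A_t$ with the left leg of $E$.

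The injectivity half, however, contains a genuine gap. You state a plan (``one rewrites the relation \dots\ should then recover $\sum_i\omega_i\o h_i\in N$'') but never carry out the one step that holds all the content: passing from the single equation $\sum_i(\omega_i\o id)\D(h_i)=0$ to the conclusion $\sum_i\omega_i\o h_i\in N$. Moreover, the tools you earmark for that step --- items $(4)$ and $(5)$ of Proposition \ref{1} and the equality $A_sh=A_th$ --- concern $\D(a)(h\o 1)$ and $\D(a)(1\o h)$, i.e.\ the coproduct of a general element cut down by a cointegral, whereas what must be unravelled is $\D(h)$ itself; they do not lead anywhere here. The missing step runs as follows. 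From $\sum_i(\omega_i\o id)((1\o a)\D(h_i))=0$ and Proposition \ref{1}$(2)$ (plus bijectivity of $S$) one gets $\sum_i(\omega_i\o id)((a\o 1)\D(h_i))=0$ for all $a$ --- this much is your ``covariance''. Now apply $\D$ to the second leg (coassociativity) and $S^{-1}$ to the resulting middle leg, then multiply the first two legs together to obtain
$$\sum_i\omega_i\bigl(a\,S^{-1}(h_{i(2)})h_{i(1)}\bigr)\,h_{i(3)}=0\qquad\text{for all }a.$$
The crucial observation is that $S^{-1}(h_{i(2)})h_{i(1)}$ lies in $\v_t(A)=S(\v_s(A))\subseteq S(A_s)$, so the balancing relation $\omega(\cdot\,S(y))\o h=\omega\o hS(y)$ pushes this factor from the functional leg onto the cointegral leg, where $h_{i(3)}S^{-1}(h_{i(2)})h_{i(1)}=h_i$ by the antipode axiom applied to $S^{-1}$. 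That is precisely where the balanced tensor product over $A_s$ is used, and it is exactly the point your sketch leaves open; your auxiliary identity $\rho(\v\o h)=h$ plays no role in the actual argument.
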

\begin{proof}
It is easy to check that they are actions. Next we will show that the map is well-defined. Since $S(y)\in A_t$, we have
$$(\omega\o id)(\D(hS(y)))=(\omega\o id)(\D(y))(S(y)\o 1)=(\omega(\cdot S(y))\o id)\D(h).$$
Now let us prove the injectivity of the map. Assume that $\sum (\omega_i\o id)\D(h_i)=0$. Then
$$
\sum (\omega_i\o id)(1\o a)\D(h_i)=0 \quad\quad\text{for all $a$,}
$$
by Proposition \ref{1} we can get
$$
\sum (\omega_i\o id)(S(a)\o 1)\D(h_i)=0 \quad\quad\text{for all $a$,}
$$
this is equivalent with
$$
\sum (\omega_i\o id)(a\o 1)\D(h_i)=0 \quad\quad\text{for all $a$.}
$$
Applying $\D$ and $S^{-1}$, then
$$
\sum \omega_i(abh_{i(1)})S^{-1}(h_{i(2)})\o h_{i(3)}=0 \quad\quad\text{for all $a, b$.}
$$
Note that if we multiply with an element from the right, then $h_{i(3)}$ is covered. Now we have
$$
\sum \omega_i(a\cdot h_{i(1)})\o S^{-1}(h_{i(2)})\o h_{i(3)}=0 \quad\quad\text{for all $a$.}
$$
So
$$
\sum \omega_i(aS^{-1}(h_{i(2)})h_{i(1)})h_{i(3)}=0 \quad\quad\text{for all $a$.}
$$
Since $S^{-1}(h_{i(2)})h_{i(1)}\in \v_t(A)$, so
$$
\sum (\omega_i\o id)(a\o h_{i(3)}S^{-1}(h_{i(2)})h_{i(1)})=0 \quad\quad\text{for all $a$.}
$$
Finally we can obtain $\sum \omega_i\o h_i=0$.
\end{proof}

\begin{remark}
If we let $A_s$ act on $H$ from the right by multiplication and on $A'$ from the left by $y\rightharpoonup \omega=\omega(\cdot y)$. Then the map $\omega\o h\mapsto (\omega\o id)\D(h)$ is injective on balanced tensor product $A'\o_{A_t} H$. The result for the other map is similar, but with $A_t$ instead of $A_s$.
And also similar results should be true for right cointegrals. We leave it to the reader to formulate these results and to prove them.
\end{remark}

Now, we turn our attention to the problem of surjectivity of these maps.

In the paper \cite{VW5}, Van Daele and Wang first defined the notion of the left leg and the right leg of an element $\D(a)$ for a fixed element $a$ or for $a$ is a subspace of $A$. The notion is also appeared  in the definition of weak multiplier Hopf algebra. Let us recall this notion again and apply it immediately for the space $H$ of left cointegrals in $A$.

\begin{definition}
By the left leg of $\D(H)$ we mean the smallest subspace $V$ of $A$ satisfying $\D(h)(1\o a)\in V\o A$ for all $a\in A$ and $h\in H$. Similarly, the right leg of $\D(H)$ is the smallest subspace $W$ of $A$ satisfying $(a\o 1)\D(h)\in A\o W$ for all $a\in A$ and $h\in H$.
\end{definition}

\begin{proposition}\label{faithful}
The following are equivalent:

$1)$ The right leg of $\D(H)$ is all of $A$,

$2)$ Given $y\in A_s$, then $y=0$ if $hy=0$ for all $h\in H$.

Also the following are equivalent:

$3)$ The left leg of $\D(H)$ is all of $A$,

$4)$ Given $x\in¡¡A_t$, then $x=0$ if $hx=0$ for all $h\in H$.
\end{proposition}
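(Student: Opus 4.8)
The plan is to turn each equivalence into a purely ``leg-theoretic'' statement by one coproduct computation, after which the forward implications are non-degeneracy and the reverse implications carry all the content.

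For $1)\Leftrightarrow 2)$, the key observation is that for $y\in A_s$ and $h\in H$ the element $hy$ again lies in $H$ (since $a(hy)=(ah)y=\v_t(a)(hy)$), and, using $\D(y)=E(1\o y)$ together with $\D(h)E=\D(h)$ (the latter because $(a\o 1)\D(h)\in(A\o A)E$ and $E$ is idempotent, so $\D(h)E=\D(h)$ after covering),
$$\D(hy)=\D(h)\D(y)=\D(h)E(1\o y)=\D(h)(1\o y)=\sum h_{(1)}\o h_{(2)}y .$$
Since $\D$ is injective — it is recovered from the counit through $(\v\o id)\bigl((c\o 1)\D(a)\bigr)=ca$ — we get $hy=0$ iff $\sum h_{(1)}\o h_{(2)}y=0$; and, pairing the latter against covering elements $a\o 1$ and arbitrary functionals on the first leg while using that $A'$ separates points of $A$, one sees that $hy=0$ for all $h\in H$ if and only if $wy=0$ for every $w$ in the right leg $W$ of $\D(H)$. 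Thus statement $2)$ says exactly that $W$ admits no nonzero right annihilator inside $A_s$.

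Given this, $1)\Rightarrow 2)$ is immediate: if $W=A$, then $Wy=0$ forces $Ay=0$, hence $y=0$ by non-degeneracy of the product of $A$. For $2)\Rightarrow 1)$ — the real work — I would first record the structure of $W$: it is a right $A_s$-submodule of $A$ (from the same identity $\D(hy)=\D(h)(1\o y)$ and $hy\in H$), and it is a \emph{left} ideal of $A$, because for $w=(\omega\o id)\bigl((c\o 1)\D(h)\bigr)\in W$, Proposition~\ref{1}(2) together with bijectivity of $S$ on $A$ gives $a\,w=(\omega\o id)\bigl((c\o a)\D(h)\bigr)=(\omega\o id)\bigl((cS(a)\o 1)\D(h)\bigr)\in W$. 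One then has to show that such a proper left ideal must possess a nonzero right annihilator in $A_s$. This is the main obstacle: for a general proper left ideal no such annihilator need exist, so the cointegral structure must enter. I expect the argument to run through the distinguished functional $\vp_B$ on $\v_s(A)$ (the one with $(\vp_B\o id)E=1$) and its faithfulness: a nonzero $f\in A'$ with $f|_W=0$ — equivalently $(id\o f)\D(h)=0$ for all $h\in H$ — is converted, by slicing $E$ and the cointegral identities $\D(a)(1\o h)=E(a\o h)$ and $\D(a)(h\o 1)=E(h\o a)$ of Proposition~\ref{1}, into a nonzero multiplier $y$ of $\v_s(A)$, i.e.\ an element of $A_s=M(\v_s(A))$; the identity $Wy=0$ is then checked by a covering computation built on those same identities and on $E\D(h)=\D(h)$. (An alternative is to identify $W$ with the part lying in $A$ of the image of the injective map $\omega\o h\mapsto(\omega\o id)\D(h)$ of Proposition~\ref{injective} and play the defect of $A$ inside that image against the $A_s$-action; I expect the route through $\vp_B$ to be the shorter one.)

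Finally, $3)\Leftrightarrow 4)$ is obtained in exactly the same way, with the two legs interchanged and $A_s$ replaced by $A_t$. Here, for $x\in A_t$ and $h\in H$ one again has $hx\in H$, and since the first leg of $E$ lies in $\v_s(A)$, which commutes with $A_t$,
$$\D(hx)=\D(h)\D(x)=\D(h)(x\o 1)E=\sum h_{(1)}x\o h_{(2)} ,$$
so that $hx=0$ for all $h\in H$ if and only if $Vx=0$ for the left leg $V$ of $\D(H)$; the rest of the argument is literally the one above with $W$, $A_s$ replaced by $V$, $A_t$ and with Proposition~\ref{1}(2) used in the form $(b\o 1)\D(h)=\sum h_{(1)}\o S^{-1}(b)h_{(2)}$ (and, equally, one may instead transport $1)\Leftrightarrow 2)$ along the antipode, using that $S(H)$ is the space of right cointegrals, $\D(S(h))=\tau(S\o S)\D(h)$ and $S(A_s)=A_t$).
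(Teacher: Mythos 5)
Your reduction of each equivalence to a statement about the legs, and the easy implications $1)\Rightarrow 2)$, $3)\Rightarrow 4)$, are correct and are exactly what the paper does: from $hy=0$ one gets $\D(h)(1\o y)=\D(hy)=0$, slices off the first leg, and uses that the right leg is all of $A$ together with non-degeneracy of the product. The route you guess for the converse is also the paper's route: a functional $\rho$ vanishing on the right leg is converted into elements of $A_s$ annihilated on the left by every $h\in H$, and the distinguished functional $\vp_B$ with $(\vp_B\o id)E=1$ recovers $\rho$ from them. So nothing in your plan is wrong.

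The gap is that this converse --- which, as you note yourself, carries all the content --- is only announced, not carried out: ``I expect the argument to run through $\vp_B$'' is not a proof, and the detour through $W$ being a left ideal that must admit a nonzero right annihilator is not needed (the paper never argues at the level of ideals). The actual computation is short, and you should supply it. If the right leg is not all of $A$, choose $\rho\neq 0$ vanishing on it, i.e.\ $(id\o\rho)\D(h)=0$ for all $h\in H$. Then $(id\o\rho)((a\o 1)\D(h))=a\,(id\o\rho)\D(h)=0$, and Proposition \ref{1}(2) turns this into $(id\o\rho)((1\o a)\D(h))=0$ for all $a,h$. Applying $\D$ to the left leg and composing with the antipode gives
$$\sum h_{(1)}\,\rho(aS(h_{(2)})h_{(3)})=0,$$
and since $\sum h_{(1)}\o aS(h_{(2)})h_{(3)}=(h\o a)(id\o S)E$, this says precisely that $h\,y_a=0$ for every $h\in H$, where $y_a:=(id\o\rho)((1\o a)(id\o S)E)$ lies in $A_s$ because the left leg of $E$ sits in $\v_s(A)$. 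Assumption $2)$ now forces $y_a=0$ for every $a$, and applying $\vp_B$ to $y_a$ yields $\rho(aS(1))=\rho(a)=0$, contradicting $\rho\neq 0$. Your sketch would have to produce exactly these elements $y_a$; until it does, $2)\Rightarrow 1)$ (and likewise $4)\Rightarrow 3)$, which transports along $S$ as you indicate) remains unproved.
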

\begin{proof}
1)$\Rightarrow$ 2):  If $hy=0$ for all $h\in H$, then $\D(hy)=0$. By $\D(y)=E(1\o y)$ we have
$$
\D(h)(1\o y)=0
$$
for all $h$. For any $\omega\in \widehat{A}$, then
$$(\omega\o id)\D(h)y=0.$$
Since the right leg is all of $A$, now we can obtain $y=0$.

2)$\Rightarrow$ 1): Suppose that the right leg of $\D(H)$ is not $A$. Then there is a $\rho\in A'$ so that
$$\rho((\omega\o id)((a\o 1)\D(h)))=0$$
for all $\omega\in \widehat{A}, a\in A$ but $\rho\neq 0$. We can rewrite this as
$$\omega((id\o \rho)((a\o 1)\D(h))))=0$$
for all $\omega, a$. Therefore
$$
(id \o \rho)\D(h)=0
$$
for all $h\in H$. Next we will show that $\rho$ must be $0$, then we finish the proof. Using the formula in Proposition \ref{1}, we obtain
\begin{align*}
a(id\o \rho)\D(h) &= (id\o \rho)((a\o 1)\D(h))\\
                  &= (id\o \rho)((1\o S^{-1}(a))\D(h))\\
                  &=0
\end{align*}
for all $a, h$. Then
$$(id\o \rho)((1\o a)\D(h))=0$$
for all $a, h$. Applying $\D$ and $S$ we have
$$\sum h_{(1)}\rho(aS(h_{(2)}h_{(3)}))=0.$$
Since we have $\sum h_{(1)}\o aS(h_{(2)})h_{(3)}=(h\o a)(id\o S)E$, so
$$
(id\o \rho)(h\o a)(id\o S)E=0
$$
for all $a, h$. Now by the assumption we can get
$$
(id\o \rho)(1\o a)(id\o S)E=0
$$
for all $a$. Applying the distinguished linear functional $\vp_B$, it implies $\rho(a)=0$ for all $a$. So $\rho=0$.

The proof of  (3)$\Leftrightarrow$(4) is similar.
\end{proof}

If the left leg of $\D(H)$ is all of $A$, then we have
\begin{align*}
A &=span\{(id\o \omega(\cdot a))\D(h)|\forall \omega\in A', a\in A, h\in H\}\\
  &=span\{(id\o \omega(a \cdot))\D(h)|\forall \omega\in A', a\in A, h\in H\}.
\end{align*}
Similarly on the other side. Now we can come to our main definition.

\begin{definition}
We say that $H$ is a faithful set of left cointegrals if the items (1) and (3) in the above proposition are satisfied. We call $(A, \D)$ of {\it discrete type} if $A$ has  a faithful set of left cointegrals.
\end{definition}

Let us also recall the definition of faithful cointegral here.

\begin{definition}
Let $A$ be a regular weak multiplier Hopf algebra with a cointegral $h$. $h$ is called faithful if the following assertion holds: For any $f\in A'$, if $(id\o f)\D(h)=0$ or $(f\o id)\D(h)=0$, then $f$ must be $0$.
\end{definition}

From the proof of  Proposition \ref{faithful} we know that if there is only one cointegral, then the conditions (1) and (3) are the same with the faithfulness of the  cointegral.   We can also compare this notion with the notion of faithful set of integrals which is defined in Definition 1.11 in  \cite{VW4}.
Obviously, a regular weak multiplier Hopf algebra with a single faithful cointegral is of discrete type. And we will turn our attention towards this special type in the next section.

Denote $\widetilde{A}=\{\omega(\cdot a)|a\in A, \omega\in A'\}$.

\begin{proposition}\label{bijective-H}
Assume that $A$ is a  weak multiplier Hopf algebra of discrete type,  $H$ is the faithful set of cointegrals. Then the two maps
 $$f_s: \widetilde{A}\o_{A_s} H\ra A,\quad \omega\o h \mapsto (\omega\o id)\D(h)$$
 $$f_t: \widetilde{A}\o_{A_t} H\ra A,\quad \omega\o h \mapsto (id\o \omega)\D(h)$$
are bijective.
\end{proposition}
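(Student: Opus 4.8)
The plan is to obtain injectivity from the computation already carried out in Proposition \ref{injective} and to get surjectivity essentially for free from the discrete type hypothesis via Proposition \ref{faithful}.

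First, for surjectivity: since $A$ is of discrete type, $H$ is a faithful set of left cointegrals, so by Proposition \ref{faithful} both the left leg and the right leg of $\D(H)$ are all of $A$. The displayed identity following the proof of Proposition \ref{faithful} says that when the left leg of $\D(H)$ is $A$ one has $A = span\{(id\o\omega(\cdot a))\D(h)\mid \omega\in A',\ a\in A,\ h\in H\}$; since $\widetilde{A}=\{\omega(\cdot a)\mid\omega\in A',\ a\in A\}$, this is precisely $Ran(f_t)$, so $f_t$ is onto. Running the mirror (``other side'') version of that identity with the right leg of $\D(H)$ gives $A = span\{(\omega(\cdot a)\o id)\D(h)\} = Ran(f_s)$, so $f_s$ is onto as well. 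In particular the images genuinely land in $A$, which is why the codomains in the statement are $A$ rather than $M(A)$.

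For injectivity I would first check that $f_s$ really descends to the balanced tensor product $\widetilde{A}\o_{A_s}H$: this is the identity $(\omega(\cdot S(y))\o id)\D(h)=(\omega\o id)\D(hS(y))$ for $y\in A_s$ established in Proposition \ref{injective} (using $S(y)\in A_t$), together with the observation that $\omega\leftharpoonup y=\omega(\cdot S(y))$ keeps $\widetilde{A}$ stable because $aS(y)\in A$ for $a\in A$. Then, given $\sum_i(\omega_i\o id)\D(h_i)=0$ with $\omega_i\in\widetilde{A}$ and $h_i\in H$, I would repeat the chain of manipulations in the proof of Proposition \ref{injective} verbatim: pass, via Proposition \ref{1}(2) and the bijectivity of $S$, to $\sum_i(\omega_i\o id)(a\o1)\D(h_i)=0$, apply $\D$ and then $S^{-1}$ with a covering element inserted, and finally exploit that $S^{-1}(h_{i(2)})h_{i(1)}\in\v_t(A)$ to read off $\sum_i\omega_i\o h_i=0$. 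The only point to monitor is that every functional appearing along the way is of the form $\omega_i(b\,\cdot\,a)$ with $a,b\in A$ and hence stays inside $\widetilde{A}$, so that the resulting identity $\sum_i\omega_i\o h_i=0$ holds in $\widetilde{A}\o_{A_s}H$ and not merely in $A'\o_{A_s}H$. The argument for $f_t$ is the same, using the $A_t$-module structures described in the remark following Proposition \ref{injective}. Combining injectivity with surjectivity gives the claim.

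The part I expect to be the real obstacle is exactly this last bookkeeping in the injectivity step: keeping the coverings correct at every stage so that all expressions are legitimate elements of $A$ (or $M(A)$), and making sure that the proof of Proposition \ref{injective}, which was phrased for $A'$, truly goes through with $\widetilde{A}$ in its place — equivalently, that the natural map $\widetilde{A}\o_{A_s}H\to A'\o_{A_s}H$ does not collapse anything relevant. Surjectivity, by contrast, is little more than a repackaging of Proposition \ref{faithful}.
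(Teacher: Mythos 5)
Your proposal is correct and follows essentially the same route as the paper, which simply cites Proposition \ref{injective} for injectivity and the faithfulness of $H$ (via Proposition \ref{faithful} and the span identity following it) for surjectivity. The one point where you are more careful than the paper is in noting that injectivity on $\widetilde{A}\o_{A_s}H$ does not formally follow from injectivity on $A'\o_{A_s}H$ (balanced tensor products need not preserve inclusions), so one should re-run the computation of Proposition \ref{injective} with $\omega_i\in\widetilde{A}$ — a legitimate refinement, and your check that the argument goes through verbatim settles it.
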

\begin{proof}
The injectivity of these maps is from Proposition \ref{injective}. Since $H$ is faithful, the ranges are all of $A$.
\end{proof}

Moreover, we can extend these maps to all of $A'$. Note that for any $\omega=f(\cdot a)\in \widetilde{A}, x\in M(A)$, we can define $\omega(x)=f(xa)$. Assume that $\omega=f(\cdot a)=g(\cdot b)$, then we have to show that $f(xa)=g(xb)$. This is true because we have local units for $A$. We can find $e$ such that $ea=a$ and $eb=b$, then $f(xa)=f((xe)a)=g((xe)b)=g(xb)$. So $\omega(x)$ is well-defined.  The extension is still denoted by $\omega$.

\begin{proposition}\label{multiplier}
Let $H$ be the faithful set of cointegrals of $A$. Then the maps
$$f_s: A'\o_{A_s} H\ra M(A),\quad \omega\o h \mapsto (\omega\o id)\D(h)$$
 $$f_t: A'\o_{A_t} H\ra M(A),\quad \omega\o h \mapsto (id\o \omega)\D(h)$$
are bijective.
\end{proposition}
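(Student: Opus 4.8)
The plan is to upgrade Proposition~\ref{bijective-H} from $\widetilde{A}$ to all of $A'$. The key point is that both the domain and the codomain naturally enlarge in a compatible way once one has the extension of functionals $\omega\in\widetilde{A}$ to multipliers, constructed just above using local units. I would first observe that, with this extension, the maps $f_s$ and $f_t$ make sense on all of $A'\o_{A_s}H$ and $A'\o_{A_t}H$ with values in $M(A)$: for arbitrary $\omega\in A'$ and $h\in H$, the element $(\omega\o id)\D(h)$ is defined as a multiplier by the usual recipe $b\mapsto (\omega\o id)(\D(h)(b\o 1))$ and $b\mapsto(\omega\o id)((b\o1)\D(h))$, both of which land in $A$ by regularity of the coproduct; well-definedness on the balanced tensor product is exactly the computation already carried out in Proposition~\ref{injective}, which only uses $S(y)\in A_t$ and $\D(hS(y))=\D(h)(S(y)\o1)$ and hence goes through verbatim for any $\omega\in A'$.

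For injectivity I would simply rerun the argument of Proposition~\ref{injective}: nowhere in that proof was it used that the $\omega_i$ come from $\widetilde{A}$; the covering arguments (multiplying by elements of $A$ from the appropriate side) only ever produce expressions of the form $\omega_i(a\cdot h_{i(1)})$ with $a\in A$, which are honest functionals on $A$, and the final step $S^{-1}(h_{i(2)})h_{i(1)}\in\v_t(A)$ together with $h_{i(3)}S^{-1}(h_{i(2)})h_{i(1)}$ recovering $h_i$ is purely algebraic. So injectivity is free.

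The real work is surjectivity onto $M(A)$. Here is the approach I would take. Fix $m\in M(A)$. For each local unit $e\in A$ (i.e.\ $A$ has local units, so given finitely many elements there is $e$ acting as identity on them), $me\in A$, and by Proposition~\ref{bijective-H} we can write $me=\sum_i(\omega_i^e\o id)\D(h_i^e)$ with $\omega_i^e\in\widetilde{A}$, $h_i^e\in H$. The task is to assemble these into a single preimage independent of $e$. The cleaner route is to go through the \emph{right} module structure: since $H$ is a faithful set of cointegrals, the left leg of $\D(H)$ is all of $A$, so $A=\mathrm{span}\{(id\o\omega(a\cdot))\D(h)\}$ as noted right before Definition of faithful set; dually, using $f_s$, one has $A = H A_s \cdot(\text{stuff})$ — more precisely I would use Proposition~\ref{unique} (the statement $h\in HA_s$ and $h\in HA_t$) to reduce a general $h$ to products, and use that $\D(h)(1\o a)\in A\o A$ to absorb the multiplier $m$ on the correct leg. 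Concretely: write $m = \lim$ via local units and use that $f_s$ is an isomorphism of $A$-bimodules (the $A$-action on $\widetilde{A}\o_{A_s}H$ being on the $\widetilde A$-factor, matching left multiplication on $A$), so $f_s$ extends to the multiplier algebras of the two sides; then identify $M(\widetilde{A}\o_{A_s}H)$ with $A'\o_{A_s}H$ via the extension-of-functionals map, which is exactly the content that $\widetilde A$ is a dense ideal in $A'$ relative to this bimodule structure.

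The main obstacle is precisely that last identification: showing that enlarging $\widetilde{A}$ to $A'$ on the domain side corresponds exactly to enlarging $A$ to $M(A)$ on the codomain side, i.e.\ that the bijection $f_s$ is "non-degenerate" as a bimodule map so that it extends uniquely to multiplier algebras, and that the multiplier algebra of $\widetilde{A}\o_{A_s}H$ is $A'\o_{A_s}H$. I expect this to require a careful check that $(\omega\o id)\D(h)=0$ for all $h\in H$ forces $\omega=0$ on $A$ when $\omega\in A'$ is only assumed to satisfy the weak covering conditions — which is again the faithfulness of $H$ — together with a counting/density argument that every $m\in M(A)$ arises. Once the bimodule-extension principle is set up, the proof is a formal consequence of Proposition~\ref{bijective-H} plus the well-definedness of $\omega(x)$ for $x\in M(A)$, $\omega\in\widetilde A$, established above; the injectivity half is, as noted, a direct reprise of Proposition~\ref{injective}.
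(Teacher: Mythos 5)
Your treatment of well-definedness and injectivity is fine, and in fact costs nothing: Proposition \ref{injective} is already stated and proved for the full balanced tensor product $A'\o_{A_s}H$, so there is nothing to ``rerun''. The problem is surjectivity, which is the only part of the proposition that actually requires a new argument, and your proposal does not establish it. You reduce everything to identifying a ``multiplier algebra'' of $\widetilde{A}\o_{A_s}H$ (which is not an algebra, so this completion would first have to be defined) with $A'\o_{A_s}H$ and to extending the bijection $f_s$ of Proposition \ref{bijective-H} to these completions --- but that identification \emph{is} precisely the statement to be proved, so as written the plan is circular. The alternative route you sketch (decompose $me$ for each local unit $e$ and ``assemble'') leaves untouched the compatibility of the decompositions as $e$ varies, which is exactly the hard point; and ``$m=\lim$ via local units'' has no meaning in this purely algebraic setting. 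You flag the crux yourself (``the main obstacle is precisely that last identification \dots I expect this to require a careful check''), so the gap is acknowledged but not closed: no candidate preimage of a given $x\in M(A)$ is ever produced.

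The paper's surjectivity argument is a short transpose/duality trick worth comparing with. Given $x\in M(A)$, one uses the bijection of Proposition \ref{bijective-H} to \emph{define} a linear functional $f\in A'$ by transporting ``evaluation of $\omega$ at $x$'' through that bijection, i.e.\ $f\bigl((\omega\o id)\D(h)\bigr)=\omega(x)$ for $\omega\in\widetilde{A}$, $h\in H$, using that every element of $A$ is uniquely of this form; one then checks $\omega\bigl((f\o id)\D(h)\bigr)=\omega(x)$ for all $\omega\in\widetilde{A}$ and concludes $(f\o id)\D(h)=x$ because $\widetilde{A}$ separates the points of $M(A)$ (if $\vp(xa)=0$ for all $\vp\in A'$ and $a\in A$, then $x=0$). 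No local units and no limiting procedure are needed. If you adopt this route, note that you must still verify that the assignment $\omega\o h\mapsto\omega(x)$ descends to the balanced tensor product (compatibility with the $A_s$-, resp.\ $A_t$-balancing), and that the pairing identity $\omega\bigl((f\o id)\D(h)\bigr)=f\bigl((id\o\omega)\D(h)\bigr)$ forces you to keep careful track of which leg the decomposition from Proposition \ref{bijective-H} lives on; these points deserve to be spelled out explicitly in any complete write-up.
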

\begin{proof}
Note for these extended maps we still use the same symbols. We only need to show that they are surjective. For any $x\in M(A), \omega\in \widetilde{A}, h\in H$, define $f\in A'$ by
$$f((\omega\o id)\D(h))=\omega(x).$$
This is well-defined because every element in $A$ can be uniquely written as $(\omega\o id)\D(h)$. Then we have $\omega((f\o id)\D(h))=\omega(x)$. Since $\widetilde{A}$ separates points in $A$, so $x=(f\o id)\D(h)$.

The proof of the second map is similar.
\end{proof}

\begin{proposition}
Let $A$ be a weak multiplier Hopf algebra of discrete type. Then there exists a bijective map between  $M(A)$ and the dual space $(\widetilde{A})'$ of $\widetilde{A}$.
\end{proposition}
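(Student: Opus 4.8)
The plan is to realise the bijection as the evaluation pairing between $M(A)$ and $\widetilde{A}$. Define a linear map $\Phi: M(A)\ra(\widetilde{A})'$ by $\Phi(x)(\omega)=\omega(x)$, where for $\omega=f(\cdot a)\in\widetilde{A}$ (with $f\in A'$, $a\in A$) the scalar $\omega(x)=f(xa)$ is the one introduced just before Proposition~\ref{multiplier}; it is independent of the chosen presentation $\omega=f(\cdot a)$ since $A$ has local units. The statement amounts to showing that $\Phi$ is injective and surjective.

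Injectivity follows once we know that $\widetilde{A}$ separates the points of $M(A)$: if $0\neq x\in M(A)$ then, by non-degeneracy of the product, $xa\neq 0$ for some $a\in A$, hence $f(xa)\neq 0$ for some $f\in A'$, and with $\omega=f(\cdot a)\in\widetilde{A}$ we get $\Phi(x)(\omega)=f(xa)\neq 0$; so $\ker\Phi=0$. This is the separation argument already used inside the proof of Proposition~\ref{multiplier}.

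Surjectivity is where discreteness is used, and the plan is to carry a given $F\in(\widetilde{A})'$ across the two structural bijections now available: $f_s: A'\o_{A_s}H\xrightarrow{\ \sim\ }M(A)$ from Proposition~\ref{multiplier} and $f_s:\widetilde{A}\o_{A_s}H\xrightarrow{\ \sim\ }A$ from Proposition~\ref{bijective-H}. The link between them is the identity $\omega((\mu\o id)\D(h))=\mu((id\o\omega)\D(h))$ for $\omega\in\widetilde{A}$, $\mu\in A'$, $h\in H$ — both sides equal $(\mu\o\omega)\D(h)$, as one sees by unfolding the multiplier $(\mu\o id)\D(h)$ and using regularity of $\D$; this is already the identity exploited in proving Proposition~\ref{multiplier}. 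Via this identity, producing $x\in M(A)$ with $\Phi(x)=F$ reduces to producing a tensor $\sum_i\mu_i\o h_i\in A'\o_{A_s}H$ with $\sum_i\mu_i((id\o\omega)\D(h_i))=F(\omega)$ for all $\omega\in\widetilde{A}$, and then taking $x=\sum_i(\mu_i\o id)\D(h_i)$; since by Proposition~\ref{bijective-H} the elements $(id\o\omega)\D(h)$ span $A$ and the identification $\widetilde{A}\o_{A_s}H\cong A$ holds, such a tensor is precisely what the transpose of that bijection manufactures from $F$.

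The main obstacle is making this transport rigorous. The naive recipe ``$(id\o\omega)\D(h)\mapsto F(\omega)$'' does not descend along the $A_s$-balancing $\omega(\cdot S(y))\o h\sim\omega\o hS(y)$, because the assigned value must also remember the $A_s$-action on $H$; hence one has to move $F$ through the two balanced tensor products with the module structures kept coherent, and this is exactly where the faithfulness of the set $H$ of cointegrals (Proposition~\ref{faithful}, i.e.\ that the legs of $\D(H)$ exhaust $A$) is genuinely needed — it is what guarantees that the required $x$ exists. Finally, after $x$ is produced one must check $\Phi(x)(\omega)=F(\omega)$ for \emph{all} $\omega\in\widetilde{A}$, not merely on a spanning family, which again uses that $\widetilde{A}$ separates the points of $M(A)$. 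Modulo this bookkeeping, $\Phi$ is the desired linear bijection between $M(A)$ and $(\widetilde{A})'$.
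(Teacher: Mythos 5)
Your map $\Phi(x)(\omega)=\omega(x)$ is exactly the map the paper uses, and your injectivity argument (non-degeneracy of the product plus the fact that $\widetilde{A}$ separates points of $M(A)$) is the paper's argument verbatim. For surjectivity you also follow the paper's route — transport $\Gamma\in(\widetilde{A})'$ through the bijections of Propositions \ref{bijective-H} and \ref{multiplier} using the interchange identity $\omega((\mu\o id)\D(h))=\mu((id\o\omega)\D(h))$ — so this is essentially the same proof, not a different one.

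The one place you stop short is the place you honestly flag: the recipe ``$(id\o\omega)\D(h)\mapsto\Gamma(\omega)$'' must descend along the balancing relations $\omega(\cdot S(y))\o h\sim\omega\o hS(y)$, and you leave this as ``bookkeeping.'' You should be aware that the paper's own proof does not do this bookkeeping either: it writes $a=(id\o\omega)\D(h)$, sets $\psi(a)=\Gamma(\omega)$, and justifies well-definedness by saying the decomposition is unique — but uniqueness holds only in the balanced tensor product $\widetilde{A}\o_{A_t}H$, so the value $\Gamma(\omega)$ a priori depends on the chosen representative unless $\Gamma$ is shown to be compatible with the relevant $A_s$- (resp.\ $A_t$-) action; moreover the multiplier $x=(\psi\o id)\D(h)$ is written with a single $h$, which only makes literal sense when there is one faithful cointegral rather than a faithful set. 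So your proposal is at least as rigorous as the published argument, and the gap you identify is a genuine one that a complete proof of surjectivity would have to close (e.g.\ by constructing the preimage of $\Gamma$ directly as an element of $A'\o_{A_s}H$ and checking compatibility with the module structures, using the faithfulness of $H$ from Proposition \ref{faithful}); it is not, however, a defect of your approach relative to the paper's.
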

\begin{proof}
For any $x\in M(A), \omega\in (\widetilde{A})'$, define
$$f: M(A)\ra (\widetilde{A})',\quad  f(x)(\omega)=\omega(x).$$
Note that $\omega(x)$ is meaningful, so $f$ is well-defined.

If $f(x)=0$ then $\omega(x)=0$ for all $\omega$. Any  $\omega$ has the form $\vp(\cdot a)$, so we have $\vp(xa)=0$ for any $\vp\in A', a\in A$. Then we have $x=0$. So $f$ is injective.

Note that by Proposition \ref{bijective-H} for any $a\in A$, we can write $a=(id\o \omega)\D(h)$ where $\omega\in \widetilde{A}$.  For any $\Gamma\in (\widetilde{A})'$, we can define $\psi\in A'$ by $\psi(a)=\Gamma(\omega)$. Now let $x=(\psi\o id)\D(h)\in M(A)$ then we have $f(x)(\omega)=\Gamma(\omega)$. So $f(x)=\Gamma$. It means that $f$ is surjective.
\end{proof}

\begin{proposition}\label{integral}
Let $A$ be a weak multiplier Hopf algebra of discrete type, $H$ is the faithful set of cointegrals. Let $\vp\in A'$,  then $\vp$ is a left integral if and only if $(id\o \vp)\D(h)\in A_t$ for all $h\in H$.
\end{proposition}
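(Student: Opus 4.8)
The plan is to prove the non-trivial direction: assuming $(id\o\vp)\D(h)\in A_t$ for all $h\in H$, deduce that $(id\o\vp)\D(a)\in A_t$ for every $a\in A$. The forward direction is immediate, since a left integral satisfies $(id\o\vp)\D(a)\in A_t$ for \emph{all} $a\in A$, and $H\subset M(A)$ with the defining property extending to multipliers, or simply by working with $ha'\in A\cap H$ for $h\in H$, $a'\in A$.

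For the converse, the key idea is to use the surjectivity from Proposition \ref{bijective-H}: since $H$ is faithful and $A$ is of discrete type, every $a\in A$ can be written as $a=(\om\o id)\D(h)$ for some $\om\in\widetilde{A}$ and $h\in H$. First I would write $\om=f(\cdot b)$ with $f\in A'$, $b\in A$, so that $a=(f(\cdot b)\o id)\D(h)$. Then I would compute $(id\o\vp)\D(a)$ by expressing it in terms of $\D(h)$ applied with $\vp$ on one leg and $f$ on the other, using coassociativity to move the coproduct onto $h$. Concretely, $\D(a) = \D\big((f(\cdot b)\o id)\D(h)\big)$, and applying coassociativity $(\D\o id)\D = (id\o\D)\D$ to $h$ one should obtain something like $(id\o\vp)\D(a) = (f(\cdot b)\o id\o\vp)\big((\D\o id)\D(h)\big)$, up to appropriate covering by elements of $A$ to make all expressions legitimate multipliers. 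By hypothesis $(id\o\vp)\D(h)\in A_t$, and $A_t=\{x\in M(A)\mid \D(x)=(x\o 1)E\}$ is a subalgebra of $M(A)$; applying the slice $(f(\cdot b)\o id)$ along the first leg should then land the result in $A_t$ as well, because slicing $\D$ of a fixed multiplier in one leg preserves membership in $A_t$ — essentially the same computation that appears in the well-definedness part of Proposition \ref{injective} and in the dual structure theory of \cite{VW4}.

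The main obstacle I expect is the covering bookkeeping: the expression $(id\o\vp)\D(a)$ a priori only makes sense as a multiplier, and to legitimately rewrite it via coassociativity applied to $h$ one must insert local units or multiply by suitable elements of $A$ on the left and right, keeping track of which legs are covered at each stage. A second, related subtlety is justifying that applying $(f(\cdot b)\o id)$ to a multiplier lying in $A_t\o$(rest) — more precisely, to $(id\o\, \cdot\,)$ of something whose relevant leg sits in $A_t$ — actually produces an element of $A_t$; this requires knowing $\D\big((f\o id)\D(h)\big)$ behaves correctly, which follows from regularity of $A$ and the coassociativity axioms, but needs care. Once these are handled, faithfulness of $H$ is not needed again for this direction beyond guaranteeing the representation $a=(\om\o id)\D(h)$ exists, so the argument closes.

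Alternatively, if the covering manipulations prove awkward, I would instead verify the integral property directly on elements: for $h\in H$ and $a'\in A$, the element $ha'\in A$ is again a left cointegral, and one checks that $(id\o\vp)\D(ha')\in A_t$ from the hypothesis (noting $\D(ha') = \D(h)\D(a')$ with the right leg now covered), then uses that $\{ha' : h\in H, a'\in A\}$ together with Proposition \ref{bijective-H} spans enough of $A$ — indeed all of $A$ via $(\om\o id)\D(h)$ — to conclude $(id\o\vp)\D(a)\in A_t$ for all $a\in A$.
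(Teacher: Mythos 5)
Your proposal is correct and follows essentially the same route as the paper: write $a=(\om\o id)\D(h)$ via Proposition \ref{bijective-H}, use coassociativity so that the hypothesis turns the last two legs into $\D\bigl((id\o\vp)\D(h)\bigr)=(x\o 1)E$ with $x=(id\o\vp)\D(h)\in A_t$, and then slice the first leg, landing in the right leg of $E$ and hence in $A_t$. The only point you leave vague is that this final membership in $A_t$ comes precisely from the defining relation $\D(x)=(x\o 1)E$ together with the fact that the right leg of $E$ is $\v_t(A)$ (rather than from a generic ``slicing preserves $A_t$'' principle); the paper makes this explicit with the computation $c(id\o\vp)\D(a)=\psi(E_1h'_{(1)}\vp(h'_{(2)}))cE_2$.
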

\begin{proof}
Assume that $(id\o \vp)\D(h)\in A_t$ for all $h\in H$. Note that  we have Proposition \ref{multiplier}, so $(id\o \vp)\D(h)$ in $A_t$ is possible.  For any $a\in A$, we can write $a=(f\o id)\D(h')$ where $f=\psi(\cdot b)$, $\psi\in A', b\in A, h'\in H$.
 For any $c\in A$, we have
\begin{eqnarray*}
c(id\o \vp)\D(a) &=& ca_{(1)}\vp(a_{(2)})\\
                 &=&\psi(h'_{(1)}b)ch'_{(2)}\vp(h'_{(3)})\\
                 &=& \psi(E_1h'_{(1)}\vp(h'_{(2)}))cE_2.
\end{eqnarray*}
Here $c$ is used for the coverings. So we can get $(id\o \vp)\D(a)\in A_t$ and this means that $\vp$ is a left integral. Conversely, obviously.
\end{proof}

Now let us consider the uniqueness of the cointegrals. And we have the following result.
\begin{proposition}\label{unique}
Let $H$ be the set of all left cointegrals. If the right leg of $\D(H)$ is all of $A$, then we have $h'\in HA_t$ and $h'\in HA_s$ for any $h'\in H$.
\end{proposition}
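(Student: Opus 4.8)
\emph{Proof proposal.} The first thing to record is that $HA_t$ and $HA_s$ are automatically contained in $H$: if $h$ is a left cointegral, $x\in A_t$ and $y\in A_s$, then for every $a\in A$ we have $a(hx)=(ah)x=\v_t(a)hx=\v_t(a)(hx)$, and likewise $a(hy)=\v_t(a)(hy)$, so $hx$ and $hy$ are again left cointegrals. Hence the assertion is equivalent to the two inclusions $H\subseteq HA_t$ and $H\subseteq HA_s$, i.e.\ to $H=HA_t=HA_s$. My starting point for proving these is the antipode identity $a=\sum a_{(1)}S(a_{(2)})a_{(3)}$ applied to $h'$: recognizing $\sum S(h'_{(2)})h'_{(3)}=\v_s(h'_{(2)})$ one gets (with the usual covering conventions, e.g.\ after multiplying on the left by a local unit when $h'\in A$)
$$
h'=\sum h'_{(1)}\v_s(h'_{(2)}),
$$
which already exhibits $h'$ as a sum of an element of $A$, the left leg of $\D(h')$, times an element of $\v_s(A)\subseteq A_s$. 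The whole problem is therefore to \emph{upgrade that left-leg factor to a left cointegral}.

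This is exactly where the hypothesis should enter. Since the right leg of $\D(H)$ is all of $A$, the discussion following Proposition \ref{faithful} gives $A=\mathrm{span}\{(\omega(a\,\cdot)\o id)\D(h)\mid\omega\in A',\ a\in A,\ h\in H\}$, and together with the injectivity result of Proposition \ref{injective} (in the form of the Remark following it, where $A_s$ acts on $H$ by right multiplication) this yields a bijection $\widetilde{A}\o_{A_s}H\ra A$, $\omega\o h\mapsto(\omega\o id)\D(h)$ — the one-sided analogue of Proposition \ref{bijective-H}, needing only the right-leg hypothesis rather than full discreteness. The plan is to feed the left leg of $\D(h')$ above through this bijection, then insert the cointegral identities of Proposition \ref{1} and its Corollary ($yh=S(y)h$ for $y\in A_s$), and finally collapse the remaining coproduct using the distinguished functional $\vp_B$ (recall $(\vp_B\o id)E=1$); the outcome should be a rewriting $h'=\sum_i h_i\v_s(b_i)$ with $h_i\in H$ and $b_i\in A$, which gives $h'\in HA_s$.

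For the inclusion $H\subseteq HA_t$ one wants instead an $A_t$-factor on the right, and the natural device here is the map $\g$ of Proposition \ref{gamma}: $\g(h)=\sum h_{(1)}S(\v_s(h_{(2)}))$ is again a left cointegral, and its right-hand factors $S(\v_s(h_{(2)}))=\v_t(S(h_{(2)}))$ lie in $\v_t(A)\subseteq A_t$. So I would combine $\g$ with the antipode identity to write $h'$ in a form with $\v_t(A)$-factors on the right, and then disentangle those factors from the coproduct, using once more the right-leg hypothesis together with the other distinguished functional $\vp_C$ (with $(id\o\vp_C)E=1$).

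\emph{Where the difficulty lies.} I expect two related obstacles. The first is bookkeeping: every step manipulates multipliers, so one must continually supply coverings (local units, extra factors from $A$) to keep all expressions in $A\o A$ and legitimate — the computation will be of the same flavour as, but longer than, the proof of Proposition \ref{injective}. The second, and the genuine crux, is the disentangling: in $h'=\sum h'_{(1)}\v_s(h'_{(2)})$ (and in $\g(h)$) the source/target factor is bound to the summation index $h'_{(2)}$, so showing it can be recast with honest cointegrals in the other leg is precisely the point at which the hypothesis on the right leg of $\D(H)$ is used essentially. Note also that the hypothesis is phrased through $A_s$ and is \emph{not} symmetric under $A_s\leftrightarrow A_t$: passing to $S(H)$ trades the right leg of $\D(H)$ for the \emph{left} leg of $\D(S(H))$, which is not assumed, so the $HA_t$-case cannot be obtained from the $HA_s$-case by merely applying $S$ and must be argued directly.
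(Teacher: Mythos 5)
Your preparatory observations are all correct: $HA_t$ and $HA_s$ consist of left cointegrals, the identity $h'=\sum h'_{(1)}\v_s(h'_{(2)})$ holds, and the hypothesis does enter through the surjectivity of $\omega\o h\mapsto(\omega\o id)\D(h)$. But the proposal stops exactly at the step you yourself call ``the genuine crux'', and the route you set up does not lead there. Expanding $h'$ through its own coproduct places the burden on the \emph{left} leg of $\D(h')$, about which nothing is assumed, and the suggested ``upgrade'' cannot be performed term by term: in $\sum h'_{(1)}\v_s(h'_{(2)})$ the two factors share a summation index, so substituting an expression of the form $\sum_i(\omega_i\o id)\D(h_i)$ for $h'_{(1)}$ inside that sum is not a legitimate operation. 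As written, then, there is a genuine gap rather than an omitted routine computation.

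The missing idea is to expand the \emph{other} factor and to use that $h'$ is itself a cointegral. Since the right leg of $\D(H)$ is all of $A$, every element of $A$ (in particular a local unit for $h'$) is a sum of elements $(\omega\o id)\D(h)$ with $h\in H$, and
$$
h'\,(\omega\o id)\D(h)=(\omega\o id)\bigl((1\o h')\D(h)\bigr).
$$
Now apply Proposition \ref{1}(2) with $a=h'$ to get $(1\o h')\D(h)=(S(h')\o 1)\D(h)$, and then use that $S(h')$ is a \emph{right} cointegral to obtain $(S(h')\o 1)\D(h)=(S(h')\o h)E$. Applying $\omega$ to the first leg leaves $h$ multiplied on the right by an element of a leg of $E$, hence by an element of $A_t$; reading the same identity the other way, $(h'\o 1)\D(h)=(1\o S^{-1}(h'))\D(h)=(h\o S^{-1}(h'))E$ gives the $A_s$ version. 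Local units then yield $h'\in HA_t$ and $h'\in HA_s$. Note that this also answers your closing worry about the asymmetry under $A_s\leftrightarrow A_t$: both inclusions come from the single identity $(1\o a)\D(h)=(S(a)\o 1)\D(h)$ specialized to $a=h'$, not from applying $S$ to the hypothesis.
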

\begin{proof}
For any $h\in H$, now $S(h')$ is a right cointegral. We have
\begin{align*}
(1\o h')\D(h)&= (S(h')\o 1)\D(h)\\
             &= (S(h')\o h)E.
\end{align*}
Applying any $\omega\in \widehat{A}$ to the formula, then we have
$$
h'(\omega\o id)\D(h)=h(\omega\o id)(S(h')\o 1)E.
$$
Since the right leg of $\D(H)$ is $A$ we find that $h'A\subseteq HA_t$. We also have
\begin{align*}
(h'\o 1)\D(h) &= (1\o S^{-1}(h'))\D(h)\\
              &= (h\o S^{-1}(h'))E,
\end{align*}
so we can get $h'A\subseteq HA_s$. Note that we have local units, then we find
$$h'\in HA_t \quad \quad \text{and} \quad \quad h'\in HA_s.$$
\end{proof}

For the right cointegrals we have similar results.

Now we have $h'\in HA_t$ and $h'\in HA_s$, then it will follow that there exists a map $\rho: A_s\ra A_t$ such that
$$h'y=h\rho (y)$$
for some $y\in A_s$.  We will find more results in the case of a single faithful cointegral. It will be discussed in the next section.

\begin{proposition}
Let $h, h'$ be two left cointegrals, then $(h'\o 1)\D(h)=(\g(h)\o S^{-1}(h'))E$.
\end{proposition}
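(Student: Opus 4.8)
The plan is to recognize the right-hand side as the same kind of expression as the left-hand side but with $h$ replaced by $\g(h)$, and then to reduce the statement to comparing $(h'\o 1)\D(h)$ with $(h'\o 1)\D(\g(h))$. First I would record two facts. Since $h'$ is a left cointegral, $S^{-1}(h')$ is a right cointegral: applying the anti-homomorphism $S^{-1}$ to $ah'=\v_t(a)h'$ and using $S^{-1}\circ\v_t=\v_s\circ S^{-1}$ (a consequence of $S\circ\v_s=\v_t\circ S$ and the bijectivity of $S$) gives $S^{-1}(h')b=S^{-1}(h')\v_s(b)$ for all $b$. Also $\g(h)$ is again a left cointegral by Proposition \ref{gamma}, so $\D(\g(h))$ makes sense and Proposition \ref{1} applies to it. Now, by the right-cointegral analogue of Proposition \ref{1} (the statement $(1\o k)\D(a)=(a\o k)E$), applied to $k=S^{-1}(h')$ and $a=\g(h)$, we get $(\g(h)\o S^{-1}(h'))E=(1\o S^{-1}(h'))\D(\g(h))$; and by Proposition \ref{unique} (the computation in its proof, applied once to $h$ and once to $\g(h)$) we have $(h'\o 1)\D(h)=(1\o S^{-1}(h'))\D(h)$ and $(h'\o 1)\D(\g(h))=(1\o S^{-1}(h'))\D(\g(h))$. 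Hence the asserted identity is equivalent to $(h'\o 1)\D(h)=(h'\o 1)\D(\g(h))$.

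It then remains to prove this equality. I would insert the definition $\g(h)=\sum h_{(1)}S(\v_s(h_{(2)}))$ and use that $S(\v_s(h_{(2)}))\in A_t$, hence $\D(S(\v_s(h_{(2)})))=(S(\v_s(h_{(2)}))\o 1)E$; combined with $\D(b)E=\D(b)$ and coassociativity this yields $\D(\g(h))=\sum\bigl(h_{(1)}S(\v_s(h_{(3)}))\o h_{(2)}\bigr)E$. Multiplying by $h'\o 1$ and comparing with $(h'\o 1)\D(h)=\sum h'h_{(1)}\o h_{(2)}$, I would push the factor $S(\v_s(h_{(3)}))$ back through $E$ and the second leg using: the corollary $yh=S(y)h$ for $y\in A_s$; the iterated form of Proposition \ref{1}$(2)$ already used in the proof of Proposition \ref{gamma}, namely $\sum S(b)h_{(1)}\o h_{(2)}\o h_{(3)}=\sum h_{(1)}\o b_{(1)}h_{(2)}\o b_{(2)}h_{(3)}$; the identity $\sum a_{(1)}\v_s(a_{(2)})=a$; and the counit axioms. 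Throughout, coverings are supplied by multiplying on the left by a suitable element of $A$, exactly as in the proofs of Propositions \ref{1} and \ref{gamma}.

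The main obstacle is this last computation: the Sweedler bookkeeping — keeping straight which leg carries the $\v_s$-tail introduced by $\g$ and which carries an antipode, and inserting the coverings so that every leg genuinely lies in $A$ — is the delicate part. Conceptually, the content is that the $\v_s$-correction built into $\g(h)$ becomes invisible once the first leg of $\D(h)$ is multiplied by an element arising from a left cointegral (so that in particular $(h\o S^{-1}(h'))E=(\g(h)\o S^{-1}(h'))E$, refining the formula obtained in Proposition \ref{unique}). Everything preceding that step is routine once one observes that $S^{-1}(h')$ is a right cointegral and that $\g(h)$ is a left cointegral.
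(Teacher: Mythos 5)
Your argument is correct, and it ultimately rests on the same ingredients as the paper's proof --- the flip $(h'\o 1)\D(h)=(1\o S^{-1}(h'))\D(h)$ coming from Proposition \ref{1}(2), the fact that $S^{-1}(h')$ is a right cointegral, the identity $(S(y)\o 1)E=(1\o y)E$ for $y\in A_s$, and the definition of $\g$ --- but it reorganizes them. The paper computes directly:
\begin{align*}
(h'\o 1)\D(h)&=(1\o S^{-1}(h'))\D(h)=(h_{(1)}\o S^{-1}(h')h_{(2)})E\\
&=(h_{(1)}\o S^{-1}(h')\v_s(h_{(2)}))E=(h_{(1)}S(\v_s(h_{(2)}))\o S^{-1}(h'))E=(\g(h)\o S^{-1}(h'))E,
\end{align*}
whereas you first rewrite the right-hand side as $(1\o S^{-1}(h'))\D(\g(h))$ --- using that $\g(h)$ is again a left cointegral and the right-cointegral formula $(1\o k)\D(a)=(a\o k)E$, which strictly speaking must be extended from $a\in A$ to the multiplier $\g(h)$ by a covering argument --- and thereby reduce the claim to $(h'\o 1)\D(h)=(h'\o 1)\D(\g(h))$. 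What this detour buys is a conceptual reading (the $\v_s$-correction in $\g$ is invisible after multiplying the first leg by a cointegral), but it also makes the final step look more delicate than it is: starting from your formula $\D(\g(h))=\sum(h_{(1)}S(\v_s(h_{(3)}))\o h_{(2)})E$, one only needs $(S(y)\o 1)E=(1\o y)E$ to move $\v_s(h_{(3)})$ onto the second leg and then $\sum a_{(1)}\o a_{(2)}\v_s(a_{(3)})=\sum a_{(1)}\o a_{(2)}$ (the antipode identity applied in the second leg); neither the corollary $yh=S(y)h$ nor the iterated form of Proposition \ref{1}(2) is actually required. In short, the proposal is sound; the paper's version is the same computation run in a single direct chain, which avoids both the extension to multipliers and the Sweedler bookkeeping you were worried about.
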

\begin{proof}
Note that $\g$ is defined in Proposition \ref{gamma}.
\begin{eqnarray*}
(h'\o 1)\D(h) &=& (1\o S^{-1}(h'))\D(h)\\
              &=& (h_{(1)}\o S^{-1}(h')h_{(2)})E\\
              &=& (h_{(1)}\o S^{-1}(h')\v_s(h_{(2)}))E\\
              &=& (h_{(1)}S(\v_s(h_{(2)}))\o S^{-1}(h'))E\\
              &=& (\g(h)\o S^{-1}(h'))E
\end{eqnarray*}
In the fourth equality we use $(1\o y)E=(S(y)\o 1)E$ for any $y\in A_s$.
\end{proof}

\section{The case of a single faithful  cointegral}

Let $A$ be a regular weak multiplier Hopf algebra. In this section we will assume that there exists a single faithful cointegral in $A$. Then we will find some better results following the previous section. The first result is a consequence of Proposition \ref{faithful}. Note that this condition is much stronger, however for multiplier Hopf algebras it is possible.

\begin{proposition}\label{first}
Assume that $h$ is the faithful cointegral in $A$. Then the maps
$$ \omega\ra (id\o \omega)\D(h) \quad \text{and} \quad  \omega\ra (\omega\o id)\D(h)$$
are bijective from $\widetilde{A}$ to $A$.
\end{proposition}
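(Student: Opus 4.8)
The plan is to deduce bijectivity of both maps directly from the faithfulness of $h$, treating injectivity and surjectivity separately. I will argue for $\omega\mapsto(id\o\omega)\D(h)$; the other map is handled by the symmetric argument, or by applying the bijective antipode $S$ together with the fact that $S(h)$ is a faithful right cointegral. First I would record that the image lies in $A$: for $\omega=g(\cdot a)\in\widetilde{A}$ one has $(id\o\omega)\D(h)=(id\o g)(\D(h)(1\o a))$, which is in $A$ because $\D(h)(1\o a)\in A\o A$. This is also visible from Proposition \ref{bijective-H}, where the map appears as $f_t$ precomposed with $\omega\mapsto\omega\o h$.

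Injectivity requires essentially nothing: if $\omega\in\widetilde{A}\subseteq A'$ and $(id\o\omega)\D(h)=0$, then $\omega=0$ by the definition of a faithful cointegral; similarly $(\omega\o id)\D(h)=0$ forces $\omega=0$.

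Surjectivity is the substantive part, and I would run the same duality argument used in the proof of Proposition \ref{faithful}. Let $R\subseteq A$ be the linear image of $\omega\mapsto(id\o\omega)\D(h)$. If $R\neq A$, choose $0\neq\rho\in A'$ with $\rho|_R=0$. Evaluating $\rho$ on the image of $\omega=g(\cdot a)$ gives $(\rho\o g)(\D(h)(1\o a))=0$ for all $g\in A'$ and $a\in A$. Fixing $a$ and letting $g$ vary over $A'$, and using that $A'$ separates points of $A$, we obtain $(\rho\o id)(\D(h)(1\o a))=0$ for all $a$. Since $(\rho\o id)(\D(h)(1\o a))=\big((\rho\o id)\D(h)\big)a$, the multiplier $(\rho\o id)\D(h)$ is annihilated by every $a\in A$, hence is $0$ by non-degeneracy of the product. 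Faithfulness of $h$ then gives $\rho=0$, a contradiction, so $R=A$. The same computation applied to $\omega\mapsto(\omega\o id)\D(h)$ produces $(id\o\rho)\D(h)=0$ instead, and faithfulness again forces $\rho=0$.

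I do not anticipate a genuine obstacle here; the only care needed is the multiplier bookkeeping, namely writing $(id\o\omega)\D(h)$ for $\omega\in\widetilde{A}$ as an honest element of $A$ (the factor $a$ in $\omega=g(\cdot a)$ supplies the coverings) and using non-degeneracy of the product to conclude that a multiplier annihilated by all of $A$ must vanish. If one prefers to lean on the earlier results, an equivalent plan is to observe that the map equals $f_t\circ(\omega\mapsto\omega\o h)$, so that bijectivity is equivalent to $\omega\mapsto\omega\o h$ being an isomorphism $\widetilde{A}\to\widetilde{A}\o_{A_t}H$; there injectivity is again faithfulness and surjectivity reduces to $H=A_th$ (equivalently $H=A_sh$, since $yh=S(y)h$ for $y\in A_s$), which with $h$ faithful follows from Proposition \ref{unique}. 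I would present the direct argument.
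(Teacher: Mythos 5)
Your proof is correct and follows essentially the same route as the paper: injectivity is immediate from the definition of a faithful cointegral, and surjectivity is obtained by taking a functional $\rho$ vanishing on the image and showing $(\rho\o id)\D(h)=0$ (resp. $(id\o\rho)\D(h)=0$), whence $\rho=0$ by faithfulness. Your version merely makes explicit the covering and non-degeneracy bookkeeping that the paper's one-line "so $(\vp\o id)\D(h)=0$" leaves implicit.
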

\begin{proof}
Since $h$ is faithful, these maps are injective. Now we will show that they are surjective. Suppose that the first map is not surjective. Then there is a $\varphi\in A'$ such that $\vp((id\o \omega)\D(h))=0$ but $\vp\neq 0$. This is equivalent with $\omega((\vp\o id)\D(h))=0$ for all $\omega$. So $(\vp\o id)\D(h)=0$. Again by the faithfulness of $h$, we can get $\vp=0$, contradiction. So the first map is surjective. Similarly for the second map.
\end{proof}

As a corollary we can get  the relation between left cointegral $h$ and right cointegral $S(h)$.
\begin{corollary}
Assume that $h$ is the faithful cointegral. Then there is an element $\d$ in $A'$ so that $S(h)=(id\o \d)\D(h)$.
\end{corollary}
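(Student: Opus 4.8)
The plan is to read the corollary off directly from the surjectivity half of Proposition \ref{first}, after first placing $S(h)$ in the right space. I would begin by noting that $S(h)$ lies in $A$ itself: by Remark ii) a cointegral may always be chosen inside $A$, and this is the standing assumption of the section, so $h\in A$; since the antipode is a bijective anti-homomorphism of $A$ with $S(A)=A$, we get $S(h)\in A$ (and $S(h)\neq 0$ by regularity). It is also worth recording, via the equivalence $(1)\Leftrightarrow(3)$ of Proposition \ref{1}, that $S(h)$ is a right cointegral, although this fact is not actually needed for the argument.

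Next I would invoke Proposition \ref{first}: because $h$ is the faithful cointegral, the map $\omega\mapsto (id\o\omega)\D(h)$ is a bijection from $\widetilde{A}$ onto $A$. Applying its surjectivity to the element $S(h)\in A$ yields some $\delta\in\widetilde{A}\subseteq A'$ with $S(h)=(id\o\delta)\D(h)$, which is precisely the claim. Moreover, injectivity of the same map (i.e.\ faithfulness of $h$) shows that this $\delta$ is uniquely determined inside $\widetilde{A}$, so one even obtains a canonical functional implementing the passage from $h$ to $S(h)$.

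I do not expect any genuine obstacle here; the entire content sits in Proposition \ref{first}. The only points deserving a sentence of care are that the codomain of the map in Proposition \ref{first} is $A$ (not merely $M(A)$) and that $S(h)$ truly lands in $A$ rather than in $M(A)\setminus A$ — both handled by the first paragraph. If one preferred not to assume $h\in A$ and worked with $S(h)\in M(A)$ instead, the same conclusion would follow from the multiplier-level bijection of Proposition \ref{multiplier} (applied with $H=\mathbb{C}h$), at the possible cost of uniqueness of $\delta$.
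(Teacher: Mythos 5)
Your proposal is correct and is exactly the route the paper intends: the corollary is read off from the surjectivity of the map $\omega\mapsto(id\otimes\omega)\Delta(h)$ in Proposition \ref{first}, applied to the element $S(h)\in A$. The extra care you take in checking that $S(h)$ lies in $A$ (and the observation on uniqueness of $\delta$ in $\widetilde{A}$) is a harmless refinement of the same argument.
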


Let $h$ be the single faithful left cointegral in $A$. From Proposition \ref{unique} we have $ha\in hA_t$ and $ha\in hA_s$ for any $a\in A$. Then we can define
$$
\g_t: A\ra A_t\quad \text{and}\quad
\g_s: A\ra A_s
$$
by
$$
ha=h\g_t(a)\quad \text{and}\quad
ha=h\g_s(a),
$$
respectively. Since $h$ is faithful, $\g_t(a)$ and $\g_s(a)$ are completely determined.

\begin{proposition}
For any $a\in A, x\in A_t, y\in A_s$, we have $\g_t(ax)=\g_t(a)x$ and $\g_s(ay)=\g_s(a)y$.
\end{proposition}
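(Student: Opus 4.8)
The plan is to read off both identities directly from the defining equations $ha=h\g_t(a)$ and $ha=h\g_s(a)$, using nothing more than associativity in $M(A)$, the fact that $A_t$ and $A_s$ are subalgebras of $M(A)$ (recorded in the Preliminaries), and the uniqueness of $\g_t(a)$ and $\g_s(a)$ — i.e. the ``$\g_t(a)$ and $\g_s(a)$ are completely determined'' remark preceding the statement, which rests on the faithfulness of $h$. Concretely, faithfulness of $h$ gives the cancellation rule I need: if $w\in A_t$ with $hw=0$ then $w=0$ (this is condition (4) of Proposition \ref{faithful} for the cointegral $h$), and likewise if $w\in A_s$ with $hw=0$ then $w=0$ (condition (2)).

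First I would observe that $ax$ and $ay$ lie in $A$ for $a\in A$ and $x,y\in M(A)$, since $A$ is an ideal of $M(A)$; hence $\g_t(ax)$ and $\g_s(ay)$ are defined. Then, for the target map, I compute inside $M(A)$:
\[
h\,\g_t(ax)=h(ax)=(ha)x=\bigl(h\,\g_t(a)\bigr)x=h\bigl(\g_t(a)x\bigr),
\]
the two middle equalities being just associativity together with $ha=h\g_t(a)$. Since $A_t$ is an algebra, $\g_t(a)x\in A_t$; as $\g_t(ax)\in A_t$ as well, the displayed equation gives $h\bigl(\g_t(ax)-\g_t(a)x\bigr)=0$ with $\g_t(ax)-\g_t(a)x\in A_t$, so by the cancellation rule above $\g_t(ax)=\g_t(a)x$. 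Replacing $A_t$ by $A_s$ and $x$ by $y$ throughout yields the identity $\g_s(ay)=\g_s(a)y$ verbatim, now using that $A_s$ is an algebra and the corresponding cancellation rule on $A_s$.

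I do not anticipate any real obstacle: the argument is a two-line calculation plus a cancellation. The only points that deserve a word of care — both already available — are the well-definedness of $\g_t$ and $\g_s$ (so that ``the element of $A_t$ with $ha=h\g_t(a)$'' is unambiguous) and the closure of $A_t$ and $A_s$ under multiplication; if one wished to be fully explicit, one would simply cite conditions (4) and (2) of Proposition \ref{faithful} at the cancellation step.
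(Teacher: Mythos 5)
Your proposal is correct and follows essentially the same route as the paper: compute $h\g_t(ax)=hax=h\g_t(a)x$ and cancel $h$ using its faithfulness (via the equivalences in Proposition \ref{faithful}). Your version is simply more explicit about the cancellation step and about $A_t$, $A_s$ being closed under multiplication.
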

\begin{proof}
We have
$$hax=h\g_t(a)x=h\g_t(ax).$$
Since $h$ is faithful we have $\g_t(ax)=\g_t(a)x$. Similarly for the other one.
\end{proof}

We can also define a map $S_1: A_s\ra A_t$ by $hy=hS_1(y)$ for any $y\in A_s$. Indeed $hy\in hA_t$ and so $S_1(y)$ is determined.

\begin{proposition}
For any $y,y'\in A_s$, $S_1(yy')=S_1(y')S_1(y).$
\end{proposition}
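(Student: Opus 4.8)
The plan is to evaluate the product $h(yy')$ in $M(A)$ in two different ways and then cancel $h$ using its faithfulness. First I would note that $A_s$ is an algebra, so $yy'\in A_s$ and the defining relation of $S_1$ applies directly, giving $h(yy')=hS_1(yy')$. On the other hand, associativity of the product in $M(A)$ lets me peel off $y'$ first, so that $h(yy')=(hy)y'=(hS_1(y))y'$.

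The key step is to move the factor $y'\in A_s$ past $S_1(y)\in A_t$. Since $A_s$ and $A_t$ are commuting subalgebras of $M(A)$ (recalled in Section 2), we have $S_1(y)y'=y'S_1(y)$, hence $(hS_1(y))y'=h(S_1(y)y')=h(y'S_1(y))=(hy')S_1(y)$. Applying the defining relation of $S_1$ to $y'$ then gives $(hy')S_1(y)=(hS_1(y'))S_1(y)=h\big(S_1(y')S_1(y)\big)$.

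Comparing the two computations yields $hS_1(yy')=h\big(S_1(y')S_1(y)\big)$, where both $S_1(yy')$ and $S_1(y')S_1(y)$ lie in $A_t$ (the latter because $A_t$ is an algebra). Cancelling $h$ — the same cancellation, valid by faithfulness of $h$, that was used to see that $\g_t$ and $\g_s$ are well defined, namely that $hz=hz'$ with $z,z'\in A_t$ forces $z=z'$ — gives $S_1(yy')=S_1(y')S_1(y)$, as desired. I do not expect a genuine obstacle here: the only non-formal ingredient is the commutation of $A_s$ with $A_t$ inside $M(A)$, and everything else amounts to keeping careful track of which of $A_s$, $A_t$ each factor belongs to.
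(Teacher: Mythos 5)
Your proof is correct and follows essentially the same route as the paper: peel off $y'$, apply the defining relation of $S_1$ to $y$, commute the resulting $A_t$-factor past $y'\in A_s$, apply $S_1$ to $y'$, and cancel $h$ by faithfulness. You are in fact slightly more explicit than the paper about where faithfulness of $h$ is invoked in the final cancellation.
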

\begin{proof}
\begin{align*}
h(yy')&= (hy)y'\\
      &= hS_1(y)y'\\
      &= hy'S_1(y)\\
      &= hS_1(y')S_1(y).
\end{align*}
In the third equality we use the fact that $A_s$ and $A_t$ are commuting subalgebras.  So $S_1(yy')=S_1(y')S_1(y)$
\end{proof}

\begin{proposition}\label{formula1}
For any $y\in A_s$, we have $\D(h)(1\o y)=\D(h)(S_1(y)\o 1)$.
\end{proposition}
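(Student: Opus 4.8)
The plan is to reduce the identity to the homomorphism property of the coproduct, extended to the multiplier algebra, together with the two defining relations $\D(y)=E(1\o y)$ for $y\in A_s$ and $\D(x)=(x\o 1)E$ for $x\in A_t$, and the defining relation $hy=hS_1(y)$ of the map $S_1$. First I would record the elementary fact that $\D(1)=E$, so that the extension $\D\colon M(A)\ra M(A\o A)$ satisfies $\D(m)=\D(m)E=E\D(m)$ for every $m\in M(A)$; in particular $\D(h)E=\D(h)$. (When $h$ happens to lie in $A$, this last equality is immediate from $T_2(A\o A)=(A\o A)E$, and no extension is needed.)

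Granting this, the proof is a short chain of equalities in $M(A\o A)$:
\begin{align*}
\D(h)(1\o y) &= \D(h)E(1\o y) = \D(h)\D(y) = \D(hy)\\
&= \D(hS_1(y)) = \D(h)\D(S_1(y)) = \D(h)(S_1(y)\o 1)E = \D(h)(S_1(y)\o 1).
\end{align*}
Here the first and last equalities use $\D(h)E=\D(h)$; the second and sixth use the defining relations of $A_s$ and $A_t$, recalling that $S_1(y)\in A_t$; the third and fifth use that the extended $\D$ is a homomorphism; and the fourth uses $hy=hS_1(y)$.

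I expect the only point requiring care is the manipulation of $\D(h)$ when $h\in M(A)\setminus A$: one must invoke the unique extension of the weak non-degenerate homomorphism $\D$ to the multiplier algebra, for which $\D(mn)=\D(m)\D(n)$ and $\D(1)=E$; once this is in place, every step above is formal. If one prefers to remain inside $A\o A$, the same chain goes through after right-multiplying both sides by an arbitrary $a\o b\in A\o A$ and left-multiplying by an arbitrary element of $A\o A$ (using $ha\in A$ to keep all expressions in $A\o A$), and then appealing to non-degeneracy of the product on $A\o A$ to recover the stated identity of multipliers.
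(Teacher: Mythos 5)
Your proof is correct and follows essentially the same chain of equalities as the paper's own proof ($\D(h)(1\o y)=\D(h)E(1\o y)=\D(h)\D(y)=\D(h)\D(S_1(y))=\D(h)(S_1(y)\o 1)$); you merely make explicit the intermediate steps $\D(h)\D(y)=\D(hy)=\D(hS_1(y))$ and the use of $\D(h)E=\D(h)$, which the paper leaves implicit. The extra care you take about extending $\D$ to $M(A)$ is sound but not a different method.
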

\begin{proof}We have
\begin{align*}
\D(h)(1\o y) &= \D(h)E(1\o y)\\
             &= \D(h)\D(y)\\
             &= \D(h)\D(S_1(y))\\
             &= \D(h)(S_1(y)\o 1).
\end{align*}
\end{proof}

If we consider the map $S_2: A_t\ra A_s$ which is defined by $hx=hS_2(x), x\in A_t$.  Then we have the following result.
\begin{proposition}\label{formula2}
$S_2$ is an anti-homomorphism from $A_t$ to $A_s$. We also have
$$\D(h)(x\o 1)=\D(h)(1\o S_2(x))$$
for any $x\in A_t$.
\end{proposition}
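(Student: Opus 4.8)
The plan is to mirror the proofs already given for $S_1$ and $\gamma_t$, since $S_2$ is the ``opposite-sided'' companion of $S_1$. Two things must be established: first that $S_2$ is an anti-homomorphism $A_t\to A_s$, and second the multiplier identity $\D(h)(x\o 1)=\D(h)(1\o S_2(x))$.

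For the anti-homomorphism property I would argue exactly as in the proof that $S_1(yy')=S_1(y')S_1(y)$. Given $x,x'\in A_t$, compute $h(xx')=(hx)x'=hS_2(x)x'$; now $S_2(x)\in A_s$ while $x'\in A_t$, and since $A_s$ and $A_t$ commute as subalgebras of $M(A)$ we may write this as $hx'S_2(x)=hS_2(x')S_2(x)$. Faithfulness of $h$ then forces $S_2(xx')=S_2(x')S_2(x)$. One should also remark that $S_2$ is linear (immediate from the defining relation $hx=hS_2(x)$ and faithfulness) so that it is genuinely an algebra anti-homomorphism.

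For the displayed identity I would copy the argument of Proposition \ref{formula1} with the roles of $A_s$ and $A_t$ interchanged. Starting from $\D(h)(x\o 1)$, use that $x\in A_t$ means $\D(x)=(x\o 1)E$, hence $(x\o 1)E=\D(x)$ and so $\D(h)(x\o 1)=\D(h)(x\o 1)E=\D(h)\D(x)$. Then replace $x$ by $S_2(x)\in A_s$ using $hx=hS_2(x)$: since $\D(h)\D(x)=\D(hx)=\D(hS_2(x))=\D(h)\D(S_2(x))$, and for $S_2(x)\in A_s$ we have $\D(S_2(x))=E(1\o S_2(x))$, we get $\D(h)\D(S_2(x))=\D(h)E(1\o S_2(x))=\D(h)(1\o S_2(x))$. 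Concatenating gives the claim. (The multiplications $\D(h)\D(x)$ etc.\ make sense in $M(A\o A)$ since $h\in A$ and $x\in M(A)$; if one prefers to be scrupulous one can multiply by $(a\o b)$ on the left throughout and then remove the covering at the end.)

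The only mildly delicate point—and thus the main obstacle if there is one—is justifying that $\D(hx)=\D(h)\D(x)$ and that $\D$ may be applied to the equation $hx=hS_2(x)$; this rests on $\D$ being a homomorphism $M(A)\to M(A\o A)$ (its weak non-degenerate extension) together with $hx, hS_2(x)\in A$, which holds because $x,S_2(x)$ are multipliers and $h\in A$. Everything else is a direct transcription of the $S_1$ arguments, so I expect the proof to be essentially a two-line computation for each of the two assertions.
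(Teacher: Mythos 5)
Your proposal is correct and follows exactly the route the paper intends: the paper gives no separate argument for this proposition, saying only that ``the proof is similar,'' i.e.\ precisely the mirrored versions of the $S_1$ and $\gamma_t$ computations that you wrote out. The only point worth making explicit is that your step $\D(h)(x\o 1)=\D(h)(x\o 1)E=\D(h)\D(x)$ tacitly uses $\D(x)=E(x\o 1)$ for $x\in A_t$ (equivalently, that $E$ commutes with $A_t\o 1$), which is the standard companion to the defining relation $\D(x)=(x\o 1)E$ and is valid in the regular case considered here.
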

The proof is similar.

Inspired by the formulas in Proposition \ref{formula1} and \ref{formula2}, we will get more formulas as a completion of Proposition \ref{1}.

For any $a\in A$, we have
\begin{align*}
\D(h)(a\o 1) &= \D(h)E(a\o 1)\\
             &= \D(ha_{(1)})((1\o S(a_{(2)})))\\
             &= \D(h\g_s(a_{(1)}))(1\o S(a_{(2)}))
\end{align*}
In the second equality we use $E(a\o 1)=\sum \D(a_{(1)})(1\o S(a_{(2)}))$. In the third equality we use $\D(y)=E(1\o y)$. And $a_{(1)}$ is covered by $h$.
 Define $S'_2(a)=\sum \g_s(a_{(1)})S(a_{(2)})$, then
 $$
 \D(h)(a\o 1)=\D(h)(1\o S'_2(a)).
 $$

We can also start with $\D(h)(1\o a)$. Now we need to define the map $S'_1(a)=\sum \g_t(a_{(2)})S^{-1}(a_{(1)})$, then  we can get
$$
\D(h)(1\o a)=\D(h)(S'_1(a)\o 1).
$$

Finally let us consider the expressions $(1\o h)\D(a)$  and $(h\o 1)\D(a)$. If we denote $(a\o 1)E_t=(id\o \g_t)\D(a)$ and $(1\o a)E_s=(\g_s\o id)\D(a)$, then $$
(1\o h)\D(a)=(a\o h)E_t \quad \text{and} \quad (h\o 1)\D(a)=(h\o a)E_s.
$$
The proof is easy. Let us check the first formula. We have
$$(1\o h)\D(a)=a_{(1)}\o h\g_t(a_{(2)})=(a\o h)E_t.$$

Note that $S'_1(a)$ and $S'_2(a)$ are well-defined in $M(A)$. And we   have $hS'_1(a)=h\v'_s(a)$ and $hS'_2(a)=h\v_s(a)$.

 Now let us collect all the formulas in the following proposition.
\begin{proposition}\label{collection}
With the notions above, for any $a\in A$, we have

$(1)\, \D(a)(1\o h)=E(a\o h),$

$(2)\, (1\o h )\D(a)=(a\o h)E_t,$

$(3)\, \D(a)(1\o h)=E(h\o a),$

$(4)\, (h\o 1)\D(a)=(h\o a)E_s,$

$(5)\, (1\o a)\D(h)=(S(a)\o 1)\D(h),$

$(6)\, \D(h)(1\o a)=\D(h)(S'_1(a)\o 1),$

$(7)\, (a\o 1)\D(h)=(1\o S^{-1}(a))\D(h),$

 $(8)\, \D(h)(a\o 1)=\D(h)(1\o S'_2(a)).$
\end{proposition}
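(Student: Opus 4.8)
The plan is essentially organizational: every one of the eight identities has either already been proved in Proposition~\ref{1}, or been derived in the paragraphs immediately preceding the statement, or follows from one of those by a single substitution using the bijectivity of $S$. So I would present the proof as a short dictionary rather than a fresh computation, and attend only to the covering conventions at the end.

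First I would dispose of $(1)$, $(3)$, $(5)$, $(7)$, which hold for an arbitrary left cointegral. Identity $(1)$ is exactly part $(5)$ of Proposition~\ref{1}; identity $(5)$ is exactly part $(2)$ of Proposition~\ref{1}; and identity $(3)$ is part $(4)$ of Proposition~\ref{1} (the left-hand side should read $\D(a)(h\o 1)$, since $\D(a)(1\o h)$ already equals $E(a\o h)$ by $(1)$). For $(7)$ I would take $(5)$, i.e. $(1\o b)\D(h)=(S(b)\o 1)\D(h)$, and substitute $b=S^{-1}(a)$; this is legitimate precisely because $A$ is regular, so $S$ is a bijection of $A$ and $S^{-1}(a)$ still ranges over all of $A$.

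Next I would handle $(2)$, $(4)$, $(6)$, $(8)$, which use the single faithful cointegral $h$ and the maps $\g_t,\g_s$ (with $ha=h\g_t(a)=h\g_s(a)$), $S'_1(a)=\sum\g_t(a_{(2)})S^{-1}(a_{(1)})$ and $S'_2(a)=\sum\g_s(a_{(1)})S(a_{(2)})$ introduced just above the statement. Here $(2)$ and $(4)$ are the identities $(1\o h)\D(a)=(a\o h)E_t$ and $(h\o 1)\D(a)=(h\o a)E_s$ obtained from $\D(a)=\sum a_{(1)}\o a_{(2)}$ by replacing $ha_{(2)}$ (resp. $ha_{(1)}$) with $h\g_t(a_{(2)})$ (resp. $h\g_s(a_{(1)})$) and reading off $E_t,E_s$ from $(a\o 1)E_t=(id\o\g_t)\D(a)$ and $(1\o a)E_s=(\g_s\o id)\D(a)$; while $(6)$ and $(8)$ are the identities $\D(h)(1\o a)=\D(h)(S'_1(a)\o 1)$ and $\D(h)(a\o 1)=\D(h)(1\o S'_2(a))$ derived above from $E(a\o 1)=\sum\D(a_{(1)})(1\o S(a_{(2)}))$, from $\D(y)=E(1\o y)$ for $y\in A_s$ and $\D(x)=(x\o 1)E$ for $x\in A_t$, and from $\D(h)E=\D(h)$. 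I would also record the auxiliary relations $hS'_1(a)=h\v'_s(a)$ and $hS'_2(a)=h\v_s(a)$, which identify $S'_1,S'_2$ as the natural $h$-twisted source maps and are convenient later.

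The only genuine obstacle is bookkeeping with coverings: expressions such as $\sum a_{(1)}\o h\g_t(a_{(2)})$ or $\sum\g_s(a_{(1)})S(a_{(2)})$ live a priori in a multiplier algebra, so each displayed equality must be read after multiplying by a suitable element of $A\o A$ (or of $A$) so that all legs land in $A$; one then checks the covered identity exactly as in the proofs of Proposition~\ref{1} and Proposition~\ref{gamma}, and removes the covering by non-degeneracy of the product. Once this routine verification is carried out, all eight formulas are established.
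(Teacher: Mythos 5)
Your proposal is correct and matches the paper exactly: the paper gives no separate proof of Proposition~\ref{collection}, since it merely collects formulas already established in Proposition~\ref{1} and in the paragraphs immediately preceding the statement, and your dictionary (including the observation that item $(3)$ should read $\D(a)(h\o 1)$ and that $(7)$ is $(5)$ after substituting $S^{-1}(a)$, which the paper itself notes) reproduces those derivations faithfully. The remarks on coverings are consistent with the paper's conventions, so nothing further is needed.
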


Note that formulas (5) and (7) are the same formula.

Our next goal is to show that if $A$ has a faithful cointegral, then $A$ must has  integrals. Similar with the Proposition \ref{multiplier}, first we will extend the two bijective maps in Proposition \ref{first} to all of $A'$.

\begin{lemma}\label{bijective}
Let $h$ be the faithful left cointegral. Define
$$F_1: f\ra (id\o f)\D(h) \quad \text{and} \quad F_2: f\ra (f\o id)\D(h).$$
Then $F_1$ and $F_2$ are bijective maps  from $A'$ to $M(A)$.
\end{lemma}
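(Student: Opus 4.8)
\textbf{Proof proposal for Lemma \ref{bijective}.}

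The plan is to upgrade the two bijections $\widetilde{A}\to A$ of Proposition \ref{first} to bijections $A'\to M(A)$, by mimicking the argument used in Proposition \ref{multiplier}. The key point is that every element $a\in A$ can be written \emph{uniquely} as $a=(\omega\o id)\D(h)$ with $\omega\in\widetilde{A}$ (and dually $a=(id\o\omega)\D(h)$), so a functional $f\in A'$ can be transported to a linear map on $A$, which is then identified with an element of $M(A)$.

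First I would note that $F_1$ and $F_2$ indeed land in $M(A)$: since $A$ is regular, $\D(h)(1\o a)$ and $(1\o a)\D(h)$ (resp. $(a\o 1)\D(h)$) lie in $A\o A$ for all $a$, so $(id\o f)\D(h)$ and $(f\o id)\D(h)$ are well-defined multipliers via the formulas $\big((id\o f)\D(h)\big)a=(id\o f)(\D(h)(a\o 1))$ and $a\big((id\o f)\D(h)\big)=(id\o f)((a\o 1)\D(h))$, and similarly on the other leg. Injectivity: if $(id\o f)\D(h)=0$ then $f((\vp\o id)\D(h))=0$, equivalently $\vp((id\o f)\D(h))=0$, for all $\vp\in A'$; but as $(id\o f)\D(h)\in M(A)$ and we may test against all $\vp$, this forces $(id\o f)\D(h)=0$ as a multiplier, and then faithfulness of $h$ gives $f=0$. (More directly: $F_1$ restricted to $\widetilde A$ is already injective by Proposition \ref{first}, and if $F_1(f)=0$ then in particular $f$ annihilates $F_1(\widetilde A)$-paired elements; the cleanest route is to use faithfulness of $h$ directly.) So $F_1$, $F_2$ are injective.

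For surjectivity, let $x\in M(A)$. I want to produce $f\in A'$ with $(id\o f)\D(h)=x$. By Proposition \ref{first}, the map $\widetilde A\to A$, $\omega\mapsto(\omega\o id)\D(h)$ is bijective, so every $a\in A$ is uniquely $a=(\omega_a\o id)\D(h)$ with $\omega_a\in\widetilde A$. Recalling (from the discussion preceding Proposition \ref{multiplier}) that any $\omega\in\widetilde A$ extends canonically to $M(A)$ via local units, define $f\in A'$ by $f(a):=\omega_a(x)$; this is linear in $a$ because $a\mapsto\omega_a$ is. Then for every $\omega\in\widetilde A$ one computes $\omega\big((f\o id)\D(h)\big)=f\big((\omega\o id)\D(h)\big)=\omega(x)$, where the first equality is the elementary identity $\omega((f\o id)\D(h))=f((\omega\o id)\D(h))$ coming from unfolding $\D(h)$ against the two functionals and covering with a local unit. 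Since $\widetilde A$ separates the points of $M(A)$ (indeed it separates points of $A$, and any $m\in M(A)$ is determined by $\{\omega(m):\omega\in\widetilde A\}$ because $am$ for $a\in A$ is determined), we conclude $(f\o id)\D(h)=x$, i.e. $F_2$ is surjective. Replacing the left leg by the right leg throughout (and using the dual bijection $\omega\mapsto(id\o\omega)\D(h)$ from Proposition \ref{first}) gives surjectivity of $F_1$.

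The only genuine subtlety — the ``main obstacle'' — is bookkeeping about where things live: one must be careful that the extension of $\omega\in\widetilde A$ to $M(A)$ is well-defined (this is exactly the local-units argument spelled out before Proposition \ref{multiplier}), that $\widetilde A$ separates points of $M(A)$ and not merely of $A$, and that the pairing identity $\omega((f\o id)\D(h))=f((\omega\o id)\D(h))$ is legitimate in the multiplier setting (it is, once one inserts a local unit $e$ with $\omega=\vp(\cdot e)$ so that all Sweedler legs are covered). None of these requires new ideas beyond what was already used for Proposition \ref{multiplier}; the single-cointegral hypothesis just replaces the balanced tensor products $\widetilde A\o_{A_s}H$, $\widetilde A\o_{A_t}H$ by $\widetilde A$ itself.
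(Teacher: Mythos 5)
Your overall strategy is exactly the paper's: the paper proves this lemma by saying ``the proof is similar to Proposition \ref{multiplier}'', and your argument is a faithful transcription of that proof with $\widetilde{A}\o_{A_s}H$ replaced by $\widetilde{A}$. Injectivity is indeed immediate from the definition of a faithful cointegral, and the well-definedness of $\omega(x)$ for $\omega\in\widetilde{A}$, $x\in M(A)$ via local units, and the fact that $\widetilde{A}$ separates points of $M(A)$, are all handled correctly.

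There is, however, one genuine gap in the surjectivity step, namely the ``elementary identity'' $\omega\bigl((f\o id)\D(h)\bigr)=f\bigl((\omega\o id)\D(h)\bigr)$ on which the whole computation rests. Unfolding both sides, $\omega\bigl((f\o id)\D(h)\bigr)=\sum f(h_{(1)})\,\omega(h_{(2)})=(f\o\omega)\D(h)$, whereas $f\bigl((\omega\o id)\D(h)\bigr)=\sum \omega(h_{(1)})\,f(h_{(2)})=(\omega\o f)\D(h)$: the two functionals land on \emph{opposite} legs, and these expressions agree only if $\D(h)$ were cocommutative. (The paper's own proof of Proposition \ref{multiplier} contains the same crossed-legs slip, so you have reproduced it rather than introduced it.) The correct pairing identity is $\omega\bigl((f\o id)\D(h)\bigr)=f\bigl((id\o\omega)\D(h)\bigr)$, both sides being $(f\o\omega)\D(h)$. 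The repair is immediate and costs nothing, because Proposition \ref{first} gives you \emph{both} bijections $\widetilde{A}\to A$: to prove surjectivity of $F_2\colon f\mapsto(f\o id)\D(h)$ you must decompose $a=(id\o\omega_a)\D(h)$ (the right-leg decomposition) and set $f(a)=\omega_a(x)$; then $\omega\bigl((f\o id)\D(h)\bigr)=f\bigl((id\o\omega)\D(h)\bigr)=\omega(x)$ for all $\omega\in\widetilde{A}$, and separation of points gives $(f\o id)\D(h)=x$. Dually, surjectivity of $F_1$ uses the left-leg decomposition $a=(\omega_a\o id)\D(h)$. In short: each map $F_i$ must be paired with the decomposition coming from the \emph{opposite} leg, not the same one as in your write-up.
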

\begin{proof}
The proof is similar with Proposition \ref{multiplier}.
\end{proof}

\begin{theorem}\label{theorem}
If $A$ is a regular weak multiplier Hopf algebra with a single faithful left  cointegral $h$, then $A$ has   integrals.
\end{theorem}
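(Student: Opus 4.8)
The plan is to obtain the integrals by inverting the two bijections of Lemma~\ref{bijective}. Since $A$ has a single faithful cointegral it is of discrete type, so the maps $F_1\colon f\mapsto (id\o f)\D(h)$ and $F_2\colon f\mapsto (f\o id)\D(h)$ are bijections from $A'$ onto $M(A)$. Now $A_s$ and $A_t$ are non-zero and unital — indeed $1\in A_s\cap A_t$ (equivalently $\D(1)=E$), and anyway $A_s=M(\v_s(A))$, $A_t=M(\v_t(A))$ are multiplier algebras. I would pick any non-zero $x\in A_t$ and any non-zero $y\in A_s$ (concretely $x=y=1$), and set $\vp=F_1^{-1}(x)$ and $\psi=F_2^{-1}(y)$. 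Then $\vp,\psi\in A'$ are non-zero because $x,y\neq 0$, and by construction $(id\o\vp)\D(h)=x\in A_t$ and $(\psi\o id)\D(h)=y\in A_s$.

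The next step is to recognize $\vp$ as a left integral and $\psi$ as a right integral, and this is precisely where Proposition~\ref{integral} is used: for $A$ of discrete type, whose cointegral space is (under the single-cointegral hypothesis) spanned by $h$, a non-zero functional $\vp$ is a left integral if and only if $(id\o\vp)\D(h')\in A_t$ for every left cointegral $h'$; by scaling this reduces to the single condition $(id\o\vp)\D(h)\in A_t$, which holds. The right-handed version of Proposition~\ref{integral} — proved verbatim with $\v_s$ and $A_s$ in place of $\v_t$ and $A_t$, writing an arbitrary $a\in A$ as $(g\o id)\D(h)$ via Proposition~\ref{first} — then shows that $\psi$ is a right integral, since $(\psi\o id)\D(h)=y\in A_s$. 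Hence $A$ carries both a left and a right integral.

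I expect the only delicate point to be the status of $h$ as an element of $M(A)$ rather than of $A$: the equalities $(id\o\vp)\D(h)=x$ and $(\psi\o id)\D(h)=y$ live in $M(A)$ and do not by themselves assert $(id\o\vp)\D(a)\in A_t$ for all $a\in A$. Bridging this gap is exactly the role of Proposition~\ref{integral}, which transports the $A_t$-membership from $\D(h)$ to $\D(a)$ using the surjectivity of $F_1$ (equivalently Proposition~\ref{first}). The extension of $F_1,F_2$ to all of $A'$ in Lemma~\ref{bijective}, together with the well-definedness of $\om(z)$ for $\om\in\widetilde{A}$ and $z\in M(A)$ via local units, is what legitimizes taking the preimages $F_1^{-1}(x)$ and $F_2^{-1}(y)$ inside $A'$. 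Beyond this bookkeeping I do not anticipate a genuine obstacle; the argument is essentially a one-line inversion once the machinery of the previous section is in place.
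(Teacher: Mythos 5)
Your proposal is correct and follows essentially the same route as the paper: invert $F_1$ (resp.\ $F_2$) from Lemma~\ref{bijective} to produce $\vp$ with $(id\o \vp)\D(h)\in A_t$ (resp.\ $\psi$ with $(\psi\o id)\D(h)\in A_s$), then invoke Proposition~\ref{integral} and its right-handed analogue to conclude these are left and right integrals. The only extra detail you add is the explicit choice $x=y=1$, which is fine since $\D(1)=E$ puts $1$ in $A_s\cap A_t$; the paper simply chooses any preimage of an element of $A_t$ (resp.\ $A_s$).
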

\begin{proof}
Since the map $F_1$ is bijective, so we can choose $\vp\in A'$ such that $(id\o \vp)\D(h)\in A_t$. By Proposition \ref{integral}, we know that every $\vp$ with the property is a left integral.

Moreover, we can choose $\psi\in A'$ such that $(\psi\o id)\D(h)\in A_s$. This is possible by the bijectivity of $F_2$. Then we can obtain  right integrals.
\end{proof}

Let $A$ be a regular weak multiplier Hopf algebra with a faithful cointegral $h$, the space of corresponding left (right) integrals on $A$  will be denoted by $\int_L(\int_R)$. One may expect that $\int_L(\int_R)$ is still faithful. Note that if $A$ is multiplier Hopf algebra, the (co)integral is automatically faithful when the (co)integral exists. In general this is not true for weak multiplier Hopf algebras.

\begin{proposition}
 $\int_L$ is right faithful. $\int_L$ is left faithful if and only if $S'_1$ is injective.
\end{proposition}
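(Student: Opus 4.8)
The plan is to reduce each half of the statement to a question about the action of the base (target) algebra $A_t$, by pulling $x$ out of the argument of $\vp$ with the help of Proposition \ref{collection} and the bijections already established. Recall (Lemma \ref{bijective} together with Proposition \ref{integral}) that $\vp\mapsto z_\vp:=(id\o\vp)\D(h)$ identifies $\int_L$ with a family of multipliers inside $A_t$ — in fact with all of $A_t$, since $F_1: A'\to M(A)$ is bijective and $F_1^{-1}(A_t)$ consists of left integrals — and recall (Proposition \ref{first}) that every $a\in A$ is $(\omega\o id)\D(h)$ for a unique $\omega\in\widetilde A$, with $\widetilde A$ separating the points of $M(A)$.

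The computational heart is the following. Fix $\vp\in\int_L$ and write $a=(\omega\o id)\D(h)$. A short Sweedler manipulation, in which the covering is supplied by $x$ and the counit is used to collapse $\sum\vp(h_{(2)})h_{(1)}=z_\vp$, gives from Proposition \ref{collection}(5)
\[ \vp(xa)=\omega\big(S(x)\,z_\vp\big) \]
and from Proposition \ref{collection}(6)
\[ \vp(ax)=\omega\big(z_\vp\,S'_1(x)\big). \]

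From the first identity I would deduce the unconditional (right) faithfulness: if $\vp(xa)=0$ for all $\vp\in\int_L$ and all $a\in A$, then because $\widetilde A$ separates points we get $S(x)\,z_\vp=0$ for every $\vp$, that is $S(x)\,z=0$ for every $z$ in the realized family of multipliers; since that family contains local units for $A$, this forces $S(x)=0$ and hence $x=0$. No hypothesis on $S'_1$ enters, which is exactly why this half holds in general. From the second identity I would obtain the equivalence: if $\vp(ax)=0$ for all $\vp$ and $a$, then $z\,S'_1(x)=0$ for all $z$ in that family, and since the left leg $\g_t(x_{(2)})$ of $S'_1(x)$ lies in $A_t$ we have $1_{A_t}S'_1(x)=S'_1(x)$, so choosing $z=1_{A_t}$ yields $S'_1(x)=0$. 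Thus $S'_1$ injective makes $\int_L$ (left) faithful; conversely, if $S'_1(x)=0$ with $x\neq0$, then $\vp(ax)=\omega(z_\vp S'_1(x))=0$ for all $\vp$ and $a$, so this faithfulness fails.

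The main obstacle I foresee is the step that identifies precisely which multipliers of $A_t$ occur as $z_\vp$ for left integrals — that in the single-faithful-cointegral situation it suffices to impose the left-integral condition only on $h$, so that this family exhausts $A_t$ (or at least carries local units for $A$) — together with the accompanying fact that $A_t$ acts non-degenerately on $A$. These are the points at which the single-cointegral hypothesis and the general machinery of weak multiplier Hopf algebras are genuinely used; the rest is routine bookkeeping with coverings and with the list of formulas in Proposition \ref{collection}.
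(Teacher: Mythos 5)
Your argument is correct and is essentially the paper's own proof in a slightly repackaged form: the two identities $\vp(xa)=\omega(S(x)z_\vp)$ and $\vp(ax)=\omega(z_\vp S'_1(x))$ are exactly what the paper obtains by writing the varying element as $(\omega\o id)\D(h)$ and invoking formulas (5) and (6) of Proposition \ref{collection}, and the final step in both cases is the same specialization to the integral $\vp_h$ with $(id\o\vp_h)\D(h)=1$ (so your worry about which multipliers arise as $z_\vp$ is moot --- only $z_{\vp_h}=1$ is needed, and it exists by Lemma \ref{bijective} and Proposition \ref{integral} since $1\in A_t$). Your assignment of ``right'' to the unconditional half and ``left'' to the $S'_1$ half agrees with the proposition and with the paper's proof, though both sit uneasily with the conventions of Definition 2.4 --- an inconsistency already present in the paper.
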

\begin{proof}
Assume that $a\in A$ and $\vp(ba)=0$ for all $b\in A, \vp\in \int_L$. By the Proposition \ref{first} we can write $b$ as $(\omega\o id)\D(h)$ with $\omega\in \widetilde{A}$. Then we have
$$(\omega\o \vp)(\D(h)(1\o a))=0$$
for any $\omega\in \widetilde{A}, \vp\in \int_L$. Using the formula (6) in Proposition \ref{collection}, we get
$$(\omega\o \vp)(\D(h)(S'_1(a)\o 1))=0$$
for any $\omega\in \widetilde{A}, \vp\in \int_L$. We can find $\vp_h\in \int_L$ such that $(id\o \vp_h)\D(h)=1$, then  we have $\omega(S'_1(a))=0$ for any $\omega\in \widetilde{A}$. Then $S'_1(a)=0$. So $\int_L$ is left faithful if and only if $S'_1$ is injective.

On the other hand, assume that $\vp(ab)=0$ for all $b$ and $\vp$. Still write $b$ as $(\omega\o id)\D(h)$. Now we use the formula (5) in Proposition \ref{collection} and choose $\vp_h$, then we get $\omega(S(a))=0$. So  we can  get $a=0$. This proves that $\int_L$ is right faithful.
\end{proof}

For the space $\int_R$ of  right integrals, we have the following result.

\begin{proposition}
$\int_R$ is right faithful. $\int_R$ is left faithful if and only if $S'_2$ is injective.
\end{proposition}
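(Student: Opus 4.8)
The plan is to transpose the proof of the preceding proposition from the first leg to the second: I replace the bijection $\omega\mapsto(\omega\o id)\D(h)$ and formulas (5), (6) of Proposition \ref{collection} by the bijection $\omega\mapsto(id\o\omega)\D(h)$ of Proposition \ref{first} and formulas (7), (8). First I would fix a right integral $\psi_h$ normalized by $(\psi_h\o id)\D(h)=1$: since $1\in A_s$ and $F_2$ is onto $M(A)$ by Lemma \ref{bijective}, such a $\psi_h\in A'$ exists, and the right-handed analogue of Proposition \ref{integral} shows it is indeed a right integral. As in the $\int_L$ case I will also use that $\widetilde A$ separates the points of $A$.

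For right faithfulness: given $a\in A$ with $\psi(ab)=0$ for all $b\in A$ and all $\psi\in\int_R$, write $b=(id\o\omega)\D(h)$ with $\omega\in\widetilde A$, so that $ab=(id\o\omega)\big((a\o 1)\D(h)\big)$. Applying formula (7) and then $\psi\o\omega$ (with the coverings supplied by $\omega\in\widetilde A$) gives
\begin{align*}
\psi(ab)=(\psi\o\omega)\big((a\o 1)\D(h)\big)=(\psi\o\omega)\big((1\o S^{-1}(a))\D(h)\big)=\omega\big(S^{-1}(a)\,(\psi\o id)\D(h)\big).
\end{align*}
Taking $\psi=\psi_h$ collapses the last expression to $\omega(S^{-1}(a))$, which therefore vanishes for every $\omega\in\widetilde A$; hence $S^{-1}(a)=0$ and $a=0$.

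For the left faithfulness statement: given $a\in A$ with $\psi(ba)=0$ for all $b\in A$ and $\psi\in\int_R$, again write $b=(id\o\omega)\D(h)$, now with $ba=(id\o\omega)\big(\D(h)(a\o 1)\big)$, and use formula (8):
\begin{align*}
\psi(ba)=(\psi\o\omega)\big(\D(h)(a\o 1)\big)=(\psi\o\omega)\big(\D(h)(1\o S'_2(a))\big)=\omega\big((\psi\o id)\D(h)\,S'_2(a)\big).
\end{align*}
Setting $\psi=\psi_h$ gives $\omega(S'_2(a))=0$ for all $\omega\in\widetilde A$, so $S'_2(a)=0$; conversely, if $S'_2(a)=0$ the same computation (for arbitrary $\psi\in\int_R$) shows $\psi(ba)=0$ for all $b$. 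Thus $\int_R$ being left faithful is equivalent to $S'_2$ being injective.

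The step I expect to need the most care is the preliminary one: confirming that $\psi_h$ really is a right integral, i.e. setting up the right-handed version of Proposition \ref{integral}, together with the routine bookkeeping of coverings needed to push $\psi$ and $\omega$ through $\D(h)$ in the displays above. Once that is in place, everything else is the mirror image of the argument for $\int_L$.
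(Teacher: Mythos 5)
Your proof is correct and is exactly the mirror-image argument the paper intends: the paper omits the proof with "the proof is similar," and your transposition via $\psi_h$, formula (7) for right faithfulness and formula (8) for the $S'_2$ criterion is precisely that argument carried out. The existence of the normalized right integral $\psi_h$ with $(\psi_h\o id)\D(h)=1$ is also how the paper obtains it (Lemma \ref{bijective} together with Theorem \ref{theorem}), so no gap remains.
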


The proof is similar.

As we see in the above proof, we have a  left integral $\vp_h$ and a right integral $\psi_h$ such that
$(id\o \vp_h)\D(h)=1$ and $(\psi_h\o id)\D(h)=1$. Now we can show that the counit is in $\widetilde{A}$.

\begin{proposition}\label{counit}
For any $a\in A$, we have $\vp_h(ah)=\v(a)=\psi_h(ha)$.
\end{proposition}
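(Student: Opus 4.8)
The plan is to prove the two identities $\vp_h(ah)=\v(a)$ and $\v(a)=\psi_h(ha)$ separately, each by a short covering computation that exploits the defining property of the cointegral $h$ together with the normalization $(id\o \vp_h)\D(h)=1$ and $(\psi_h\o id)\D(h)=1$.

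For the first identity, I would start from the expression $\vp_h(ah)$ and rewrite $ah$ using the cointegral property $ah=\v_t(a)h=\sum a_{(1)}S(a_{(2)})h$ of Proposition \ref{1}(1). Then I want to reintroduce the missing factor $a_{(3)}$ via the antipode axiom $\sum a_{(1)}S(a_{(2)})a_{(3)}=a$; the natural way to do this is to compute instead with the two-legged object $\D(a)(1\o h)$, which by formula (1) of Proposition \ref{collection} equals $E(a\o h)$. Applying $id\o\vp_h$ to $\D(a)(1\o h)=E(a\o h)$ gives $\sum a_{(1)}\vp_h(a_{(2)}h)=(id\o\vp_h)(E(a\o h))=(id\o\vp_h)(E(1\o h))a$. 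Now I use $(id\o\vp_h)(E(1\o h))=(id\o\vp_h)\D(h)=1$ (since $\D(h)=E(1\o h)$ is false in general — rather one should note $E(1\o h)$ relates to $\D(h)$ through the cointegral identities, so here I would instead observe directly that $(id\o\vp_h)(E(1\o h))$ can be simplified using $(id\o\vp_h)\D(h)=1$ after writing $E(1\o h)$ appropriately). Having reduced to $\sum a_{(1)}\vp_h(a_{(2)}h)=a$ and then applying the counit $\v$ to both legs — or rather applying $\v\o id$ in the correct slot — yields $\vp_h(ah)=\v(a)$ after using the counit property.

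For the second identity $\v(a)=\psi_h(ha)$, I would run the mirror‑image argument: start from $(h\o 1)\D(a)=(h\o a)E_s$, which is formula (4) of Proposition \ref{collection}, apply $\psi_h\o id$ to get $\sum \psi_h(ha_{(1)})a_{(2)}=(\psi_h\o id)((h\o a)E_s)=a(\psi_h\o id)((h\o 1)E_s)$, and then use the normalization $(\psi_h\o id)\D(h)=1$ together with $(h\o 1)\D(a)$-type identities to conclude $(\psi_h\o id)((h\o 1)E_s)=1$. This reduces to $\sum\psi_h(ha_{(1)})a_{(2)}=a$, and applying the counit collapses the sum to $\psi_h(ha)=\v(a)$. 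Throughout, the multiplications by $h$ (and by a local unit $e$ where needed) supply all the necessary coverings, since $ha\in A$ for $h\in M(A)$.

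The main obstacle I anticipate is bookkeeping with the idempotent $E$ and the partial counit/distinguished functionals: one must be careful that $(id\o\vp_h)(E(1\o h))$ genuinely equals $1$ rather than merely a source/target multiplier, and that the counit can legitimately be applied to the relevant leg despite $h$ living in $M(A)$. I expect this to be resolved by inserting a local unit $e$ with $eh = $ (a convenient element) and invoking the counit axiom $(\v\o id)(\D(a)(1\o b))=ab$ in the form that is already available, so that no new covering lemma is needed; the computation should then be a routine few lines.
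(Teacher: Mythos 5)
Your outline (apply $id\o \vp_h$ to $\D(a)(1\o h)=E(a\o h)$, then apply $\v$ and use the counit axiom) is sound as far as it goes, but the step you yourself flag as "the main obstacle" is in fact the entire mathematical content of the proposition, and you do not supply it. Your reduction leaves you needing $(id\o \vp_h)(E(1\o h))=1$, equivalently $\v(E_1a)\vp_h(E_2h)=\v(a)$ for all $a$. This cannot be obtained from local units or from the counit axiom: $E(1\o h)$ is \emph{not} $\D(h)$ (nor is $h$ in $A_s$), so the normalization $(id\o \vp_h)\D(h)=1$ says nothing until you relate $E(1\o h)$ to $\D(h)$. The missing ingredient is the structural identity $E(1\o b)=\sum \v'_s(b_{(1)})\o b_{(2)}$; applying it with $b=h$ gives $(id\o \vp_h)(E(1\o h))=\v'_s\bigl(\textstyle\sum h_{(1)}\vp_h(h_{(2)})\bigr)=\v'_s(1)=1$, using that the extension of $\v'_s$ to $M(A)$ is unital. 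This is precisely what the paper's one-line proof does, after first writing $\vp_h(ah)=\vp_h(\v_t(a)h)=\v(E_1a)\vp_h(E_2h)$ via $\v_t(a)=\v(E_1a)E_2$; your route through $\D(a)(1\o h)=E(a\o h)$ arrives at the same intermediate expression, so the two arguments are essentially the same modulo this unproven identity.

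The gap is worse for the second equality. You reduce it to $(\psi_h\o id)((h\o 1)E_s)=1$, where $E_s$ is the ad hoc multiplier defined only through $(\g_s\o id)\D(a)=(1\o a)E_s$; "the normalization together with $(h\o 1)\D(a)$-type identities" is not an argument, and it is not clear how the first leg of $(h\o 1)E_s$ interacts with $\psi_h$. The paper instead invokes the mirror formula $(a\o 1)E=\sum a_{(1)}\o \v'_t(a_{(2)})$, which again ties $E$ directly to the coproduct so that $(\psi_h\o id)\D(h)=1$ can be used. Until you establish $(id\o \vp_h)(E(1\o h))=1$ and its mirror, the proposal is a correct reduction but not a proof.
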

\begin{proof}
Recall  $E(1\o a)=\v'_s(a_{(1)})\o a_{(2)}$ and $(id\o \vp_h)\D(h)=1$, we have
$$\vp_h(ah)=\vp_h(\v_t(a)h)=\vp_h(E_2h)\v(E_1a)=\vp_h(h_{(2)})\v(\v'_s(h_{(1)})a)=\v(a).$$
If we use $(a\o 1)E=a_{(1)}\o \v'_t(a_{(2)})$ then we can obtain another equality.
\end{proof}

 Next we will assume that the space $\int_L$ is faithful, then we can define the dual weak multiplier Hopf algbera $\widehat{A}$.

\begin{proposition}\label{unit}
Let $A$ be a regular weak multiplier Hopf algebra with a faithful left cointegral $h$. If the related integral space $\int_L$ is faithful then $\widetilde{A}=\widehat{A}$, moreover we have $\v\in \widehat{A}$, then $\widehat{A}$ is a weak Hopf algebra.
\end{proposition}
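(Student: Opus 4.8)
The plan is to prove the three claims in order: first $\widetilde{A}=\widehat{A}$, then $\v\in\widehat{A}$, then that $\widehat{A}$ is a weak Hopf algebra (equivalently, that it is unital). For the first claim, recall that $\widehat{A}=\{\vp(\cdot a)\mid a\in A,\ \vp\in\int_L\}$ and $\widetilde{A}=\{\omega(\cdot a)\mid a\in A,\ \omega\in A'\}$, so the inclusion $\widehat{A}\subseteq\widetilde{A}$ is automatic. For the reverse inclusion I would use Proposition \ref{first}: every element of $\widetilde{A}$ is of the form $\omega(\cdot a)$, and since the map $\omega\mapsto(id\o\omega)\D(h)$ is bijective from $\widetilde{A}$ onto $A$, I can write any $a\in A$ as $a=(id\o\omega)\D(h)$ for a unique $\omega\in\widetilde{A}$. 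Given $f=g(\cdot a)\in\widetilde{A}$, I want to produce a left integral $\vp$ and an element $b\in A$ with $f=\vp(\cdot b)$. The key computation is that for $x\in A$ and $\omega\in\widetilde{A}$ with $a=(id\o\omega)\D(h)$, one has $g(xa)=g\bigl(x(id\o\omega)\D(h)\bigr)=(id\o\omega)\bigl(g(\cdot)\o id\bigr)\bigl((x\o 1)\D(h)\bigr)$, which via the formulas in Proposition \ref{collection} (particularly (5)/(7)) can be rewritten as an expression of the form $\vp(xb)$ where $\vp=(id\o\psi)\D(h)$-type functional built from $g$ and $\omega$ turns out to be a left integral by Proposition \ref{integral}. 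This is the step that needs care.

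For $\v\in\widehat{A}$: by Proposition \ref{counit} we have $\v(a)=\vp_h(ah)$, i.e. $\v=\vp_h(\cdot\, h)$, where $\vp_h\in\int_L$ is the left integral with $(id\o\vp_h)\D(h)=1$. Since $\vp_h$ is a left integral and $h\in A$ (using Remark ii) — we may replace $h$ by $ha'$ for a suitable local unit $a'$ so that the representative lies in $A$, while keeping $\v=\vp_h(\cdot\,ha')$), this exhibits $\v$ as an element of $\widehat{A}$ directly from the definition $\widehat{A}=\{\vp(\cdot a)\mid\vp\in\int_L,\ a\in A\}$. I would spell out the local-unit adjustment carefully, since $\v$ itself is a functional on $A$ and the pairing $\vp_h(\cdot\,h)$ must be interpreted through the extension of functionals in $\widetilde{A}$ to $M(A)$ discussed before Proposition \ref{multiplier}.

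Finally, to conclude that $\widehat{A}$ is a weak Hopf algebra rather than merely a weak multiplier Hopf algebra, I would invoke the standard fact that a regular weak multiplier Hopf algebra whose counit $\v$ lies in the algebra itself is unital: indeed, for a weak multiplier Hopf algebra $(B,\D_B)$ the counit $\v_B$ satisfies $(\v_B\o id)(\D_B(b)(1\o b'))=bb'$ and $(id\o\v_B)((b\o 1)\D_B(b))=bb'$, and if $\v_B\in B$ then applying these identities shows that $\v_B$ (suitably interpreted as a multiplier, via the product $B\cdot\v_B$ and $\v_B\cdot B$) acts as a two-sided identity, forcing $M(B)=B$; a unital weak multiplier Hopf algebra is precisely a weak Hopf algebra (this is the finite-groupoid situation in the motivating example). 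I would apply this with $B=\widehat{A}$, using the first two claims. The main obstacle I anticipate is the first claim $\widetilde{A}=\widehat{A}$: one has to correctly identify, for an arbitrary $g(\cdot a)\in\widetilde{A}$, a genuine left integral in $\int_L$ whose shift by an element of $A$ reproduces it, and this requires combining the bijectivity of $F_1$ (Lemma \ref{bijective}) with the integral-characterization of Proposition \ref{integral} and the module-type identities of Proposition \ref{collection}, keeping all coverings in place.
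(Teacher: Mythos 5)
There are two genuine gaps here, and the first one is in the central claim. The inclusion $\widetilde{A}\subseteq\widehat{A}$ is exactly the step you leave as a plan rather than a proof: you say you want to produce a left integral $\vp$ and $b\in A$ with $f=\vp(\cdot b)$, that some "$(id\o\psi)\D(h)$-type functional built from $g$ and $\omega$ turns out to be a left integral," and you yourself flag this as "the step that needs care" and "the main obstacle." The decomposition $a=(id\o\omega)\D(h)$ followed by manipulating $g(xa)$ does not visibly produce anything of the form $\vp(xb)$ with $\vp\in\int_L$, and no candidate integral is ever identified. The paper's argument is short and concrete: take the specific left integral $\vp_h$ with $(id\o\vp_h)\D(h)=1$. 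Given $f\in\widetilde{A}$, Proposition \ref{first} together with bijectivity of $S$ gives $(id\o f)\D(h)=S(a)$ for some $a\in A$; on the other hand, by Proposition \ref{1}(2),
$$(id\o \vp_h)((1\o a)\D(h))=(id\o \vp_h)((S(a)\o 1)\D(h))=S(a)\,(id\o\vp_h)\D(h)=S(a),$$
so $(id\o \vp_h(a\cdot))\D(h)=(id\o f)\D(h)$, and faithfulness of $h$ forces $f=\vp_h(a\cdot)\in\widehat{A}$. Nothing in your sketch supplies the functional $\vp_h$ or this identity, so as written the first claim is not established.

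The final step also contains a conceptual error. You assert that for a weak multiplier Hopf algebra $(B,\D_B)$, if the counit $\v_B$ lies in $B$ then the counit identities force $\v_B$ to act as a two-sided identity for the product of $B$. That inference is false: the counit identities concern the coproduct, and the counit is the unit of the \emph{convolution} algebra on $B'$, not of $B$ itself. Moreover you apply this with $B=\widehat{A}$, whereas what the first two claims give you is that $\v$, the counit of $A$, lies in $\widehat{A}$ --- not that the counit of $\widehat{A}$ does. The correct argument is dual to what you wrote: the product of $\widehat{A}$ is the convolution product dual to $\D$, for which $\v$ is the unit by the counit property $(\v\o id)(\D(a)(1\o b))=ab$; hence $\v\in\widehat{A}$ makes $\widehat{A}$ unital, and Proposition 4.12 of \cite{VW2} (a unital regular weak multiplier Hopf algebra is a weak Hopf algebra) finishes the proof. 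Your treatment of $\v\in\widehat{A}$ via Proposition \ref{counit} is fine in substance, but the local-unit adjustment is both unnecessary (in this section $h$ is taken in $A$) and incorrect as stated: replacing $h$ by $ha'$ turns $\vp_h(\cdot\, h)$ into $\v(\cdot\, a')$, which is not $\v$ in general.
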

\begin{proof}
We only need to show that $\widetilde{A}\subset \widehat{A}$.  Take the left integral $\vp_h$, for any $a\in A$, we have
\begin{eqnarray*}
(id\o \vp_h)((1\o a)\D(h))&=& (id\o \vp_h)((S(a)\o 1)\D(h))\\
                          &=& S(a).
\end{eqnarray*}
By Proposition \ref{first}, there exists a $f\in \widetilde{A}$ such that
$$(id\o f)\D(h)=S(a)=(id\o \vp_h)((1\o a)\D(h)).$$
By the faithfulness of $h$ we get $f=\vp_h(a\cdot)$. So $\widetilde{A}\subset \widehat{A}$.  Then $\widetilde{A}= \widehat{A}$.

By the Proposition 4.12 in \cite{VW2}, we know that if $A$ has an identity, then $A$ is a weak Hopf algebra. Note that in Proposition \ref{counit} we have shown that $\v\in \widehat{A}$, so $\widehat{A}$ is a weak Hopf algebra.

\end{proof}

Now we will investigate when the regular weak multiplier Hopf algebra $A$ has  a cointegral.  First let us recall some   notions from \cite{VDZ}. We can consider the dual space $A'$ as a left $A$-module for the action defined by $a\omega=\omega(\cdot a)$ where $a\in A, \omega\in A'$. Note that $\widetilde{A}=AA'$ and the left $A$-module $\widetilde{A}$ is unital.

\begin{definition}
Let $A$ be an algebra (possibly infinite dimensional, possibly without an identity). If there is an $A$-module isomorphism from $A$ to $AA'$, then we call $A$ a {\it Frobenius} algebra. Let $I$ be a left ideal of $A$, the right annihilator of $I$ will be denoted by $r(I)$, the left annihilator  of $I$ will be denoted by $l(I)$. An algebra $A$ is called {\it quasi-Frobenius} if for any left ideal $I$ and any right ideal $J$ we have $lr(I)=I$ and $rl(J)=J$.
\end{definition}

\begin{proposition}
If $A$ has a  faithful cointgeral $h$, then $A$ is Frobenius.
\end{proposition}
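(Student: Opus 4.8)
The plan is to exhibit an explicit $A$-module isomorphism $A \to AA' = \widetilde{A}$. Recall from Proposition \ref{first} (applied to the faithful cointegral $h$) that the map $\omega \mapsto (id\o\omega)\D(h)$ is a bijection from $\widetilde{A}$ to $A$; its inverse $F\colon A \to \widetilde{A}$ is therefore a linear bijection, sending an element $a = (id\o\omega)\D(h)$ back to $\omega$. Since $\widetilde{A}$ is a unital left $A$-module under $(b\cdot\omega) = \omega(\cdot b)$ and $A$ is a left $A$-module over itself by left multiplication, it suffices to check that $F$ is $A$-linear, i.e. $F(ba) = b\cdot F(a)$ for all $b \in A$. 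Equivalently, writing $a = (id\o\omega)\D(h)$, I must show that $ba = (id \o (b\cdot\omega))\D(h) = (id\o\omega(\cdot b))\D(h)$.

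The key computation is therefore to verify the identity
$$
b\,\big((id\o\omega)\D(h)\big) \;=\; (id\o\omega(\cdot b))\D(h)
\qquad\text{for all } b\in A,\ \omega\in\widetilde{A}.
$$
This is exactly the observation already recorded in the excerpt right after Proposition \ref{faithful}: when the left leg of $\D(H)$ is all of $A$ one has $A = \mathrm{span}\{(id\o\omega(\cdot a))\D(h)\}$, and the passage from $\omega(a\cdot)$ to $\omega(\cdot a)$ there is precisely the module compatibility. Concretely, $b(id\o\omega)\D(h) = (id\o\omega)\big((b\o 1)\D(h)\big)$, and by formula (7) of Proposition \ref{collection}, $(b\o 1)\D(h) = (1\o S^{-1}(b))\D(h)$, so this equals $(id\o\omega(S^{-1}(b)\,\cdot\,))\D(h)$; a short manipulation (or directly using that $\widetilde{A}$-action and the extension of $\omega$ to $M(A)$ are compatible via local units) rewrites this in the form $(id\o\rho)\D(h)$ with $\rho \in \widetilde{A}$ determined by $b$ and $\omega$, and one checks $F(ba) = \rho = b\cdot F(a)$. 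I would carry this out carefully with the covering element $b$ making every leg well-defined.

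The main obstacle is purely bookkeeping: making sure the multiplier expressions $(id\o\omega)\D(h)$ and the action of $\omega$ (extended to $M(A)$ via local units, as explained before Proposition \ref{multiplier}) are manipulated legitimately, and that the identification of the module structures on $A$ and on $\widetilde{A}$ matches up on the nose rather than up to an antipode twist — this is why it is cleaner to use formula (7) of Proposition \ref{collection} to convert left multiplication by $b$ into a right-leg operation before applying $\omega$. Once the $A$-linearity of the bijection $F$ is established, $F$ is the required $A$-module isomorphism $A \to AA'$, and $A$ is Frobenius by definition.
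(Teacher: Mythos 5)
Your overall strategy is the paper's: take the bijection of Proposition \ref{first} between $\widetilde{A}$ and $A$ and check that it is $A$-linear using formula (7) of Proposition \ref{collection}. But the key identity you set out to verify,
$$
b\bigl((id\o\omega)\D(h)\bigr)=(id\o\omega(\cdot\, b))\D(h),
$$
is false, and your own computation shows why: formula (7) gives
$b\bigl((id\o\omega)\D(h)\bigr)=(id\o\omega)\bigl((1\o S^{-1}(b))\D(h)\bigr)=(id\o\omega(S^{-1}(b)\,\cdot\,))\D(h)$,
and the functional $\omega(S^{-1}(b)\,\cdot\,)$ is not $\omega(\cdot\, b)=b\cdot\omega$. (Indeed, by formula (6) one has $(id\o\omega(\cdot\, b))\D(h)=\bigl((id\o\omega)\D(h)\bigr)S'_1(b)$, a \emph{right} multiplication, not left multiplication by $b$.) So the untwisted map $\omega\mapsto(id\o\omega)\D(h)$ intertwines left multiplication on $A$ with the action $b\rhd\omega=\omega(S^{-1}(b)\,\cdot\,)$, not with the module structure of $AA'$. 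The ``antipode twist'' you flag as a bookkeeping worry is precisely the missing ingredient; formula (7) introduces $S^{-1}(b)$ on the wrong side of the argument of $\omega$ and does not remove the twist.

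The repair is exactly what the paper does: insert the antipode into the map, i.e.\ define $F:\widetilde{A}\to A$ by $F(f)=(id\o f\circ S)\D(h)$. Then
$$
aF(f)=(id\o f\circ S)\bigl((1\o S^{-1}(a))\D(h)\bigr)=\sum h_{(1)}\,f\bigl(S(h_{(2)})a\bigr)=F\bigl(f(\cdot\, a)\bigr)=F(af),
$$
so $F$ is a bijective $A$-module map from $AA'$ (with $af=f(\cdot\, a)$) onto $A$, and $A$ is Frobenius. Everything else in your plan --- bijectivity from the faithfulness of $h$, the covering of legs by local units --- is fine; only the module-compatibility step needs this correction.
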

\begin{proof}
By the Proposition \ref{first} we know that the map
$$F: \widetilde{A}\ra A, f\mapsto (id\o S(f))\D(h)$$
is  bijective. For any $a\in A$, we have
\begin{eqnarray*}
aF(f) &=& a((id\o S(f))\D(h))\\
      &=& ((id\o S(f))((a\o 1)\D(h))\\
      &=&  ((id\o S(f))((1\o S^{-1}(a))\D(h))\\
      &=& (id\o S(af))\D(h)\\
      &=& F(af)
\end{eqnarray*}
 So $F$ is a module isomorphism, then  $A$ is Frobenius.
\end{proof}

Note that when $A$ is a finite dimensional weak Hopf algebra, $A$ has a  faithful cointegral if and only if $A$ is a Frobenius algebra (see the Theorem 3.16 in \cite{BNS}).

\begin{proposition}
Let $A$ be a regular weak multiplier Hopf algebra. If $A$ is Frobenius and $\v\in \widetilde{A}$, then $A$ has a cointegral.
\end{proposition}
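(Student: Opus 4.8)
The plan is to reverse the construction used in the proof that a faithful cointegral makes $A$ Frobenius, exhibiting an explicit candidate for the cointegral out of the data of a Frobenius isomorphism together with the hypothesis $\v\in\widetilde{A}$. Recall that $A$ being Frobenius means there is an $A$-module isomorphism $F\colon \widetilde{A}=AA'\to A$. Since the left $A$-module $\widetilde{A}$ is unital and $\v\in\widetilde{A}$, the element $h:=F(\v)\in A$ is a natural candidate for a left cointegral; the heart of the argument will be to verify that it satisfies $ah=\v_t(a)h$ for all $a\in A$, and that it is nonzero (which is clear since $F$ is injective and $\v\neq 0$).

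First I would record what module-linearity of $F$ gives: for all $a\in A$ and $f\in\widetilde{A}$ we have $aF(f)=F(af)$, where $af=f(\cdot\,a)$ is the module action. Thus $ah = aF(\v) = F(a\v) = F\bigl(\v(\cdot\,a)\bigr)$. So the task reduces to showing $\v(\cdot\,a)=\v_t(a)\cdot\v$ as elements of $\widetilde{A}$, i.e. $\v(xa)=\v(x\,\v_t(a))$ for all $x\in A$; indeed, applying $F$ to this identity then yields $ah=\v_t(a)F(\v)=\v_t(a)h$ once we know $F$ is also right-$A_t$-linear in the appropriate sense, or more simply once we observe $F(\v_t(a)\v)=\v_t(a)F(\v)$ using that $\v_t(a)\in M(A)$ acts on $A$ and $F$ extends to a multiplier-module map. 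The identity $\v(xa)=\v(x\,\v_t(a))$ is a standard property of the target map: from $(id\o\v)\bigl((x\o 1)\D(a)\bigr)=xa$ and the defining relations of $\v_t$ one gets $\v(x\,\v_t(a)) = \v\bigl(x\sum a_{(1)}S(a_{(2)})\bigr)$, and using that $\v$ is a counit together with $\sum a_{(1)}S(a_{(2)})a_{(3)}=a$ this collapses to $\v(xa)$; the coverings are available because $x\in A$ sits on the left.

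The main obstacle, and the step that needs the most care, is the passage from $F$ being a module isomorphism $\widetilde{A}\to A$ (an isomorphism of genuine left $A$-modules) to being compatible with the action of multipliers such as $\v_t(a)\in A_t\subseteq M(A)$, so that $F(\v_t(a)\v)=\v_t(a)F(\v)=\v_t(a)h$ is legitimate. I would handle this by the usual non-degeneracy/local-units argument: pick a local unit $e\in A$ with $e\v_t(a)$ approximating $\v_t(a)$ on the relevant finite-dimensional piece, use $F$'s genuine $A$-linearity on $e\cdot(\v_t(a)\v)$, and then remove $e$ using that the product on $A$ is non-degenerate and $\widetilde{A}$ is a unital $A$-module. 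Once $ah=\v_t(a)h$ is established for all $a$, $h$ is by definition a left cointegral, completing the proof; faithfulness of $h$ need not be claimed, since the statement only asserts existence of a cointegral.
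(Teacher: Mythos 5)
Your proposal is correct and follows essentially the same route as the paper: set $h=F(\v)$, use $A$-linearity of $F$ to get $ah=F(\v(\cdot\,a))$, invoke the identity $\v(\cdot\,a)=\v(\cdot\,\v_t(a))$, and conclude $ah=\v_t(a)h$. You are in fact more careful than the paper on the two points it glosses over (extending $F$'s module-linearity to the multiplier $\v_t(a)$, and noting $h\neq 0$ from injectivity of $F$), so nothing further is needed.
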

\begin{proof}
Let $F$ be the isomorphism. Then we have
$$aF(\v)= F(a\v)=F(\v(\cdot a))=F(\v(\cdot \v_t(a)))=\v_t(a)F(\v).$$
So $F(\v)$ is a cointegral.
\end{proof}

Remark that we do not have $\v\in \widetilde{A}$ in general. But for regular multiplier Hopf algebras and weak Hopf algebras, we always have $\v\in \widetilde{A}$.

Next let us recall Lemma 3.5 in \cite{VDZ}. Assume that $A$ is a regular weak multiplier Hopf algebra.

\begin{lemma}\label{bot}
Let $A$ be Frobenius, the isomorphism from $\widetilde{A}$ to $A$ is denoted by $F$. For any left ideal $I$ of $A$ we have $F(I^{\bot})=r(I)$ where $I^{\bot}=\{f\in \widetilde{A}|f(a)=0, \forall a\in A\}$.
\end{lemma}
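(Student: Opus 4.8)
The plan is to unwind both sides of the claimed equality into statements about the vanishing of an element $f\in\widetilde{A}$ on the left ideal $I$, using that $F$ intertwines the left $A$-module structures. Recall that $\widetilde{A}=AA'$ carries the left action $a\cdot f=f(\cdot\, a)$ and that, by the Frobenius hypothesis, $F\colon\widetilde{A}\to A$ is a bijective left $A$-module map, so $F(a\cdot f)=aF(f)$ for all $a\in A$ and $f\in\widetilde{A}$.

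First I would fix $f\in\widetilde{A}$ and observe that $F(f)\in r(I)$ means $aF(f)=0$ for every $a\in I$, which by the module property rewrites as $F(a\cdot f)=0$ for every $a\in I$. Since $F$ is injective, this is equivalent to $a\cdot f=0$ in $\widetilde{A}$ for all $a\in I$, that is, $f(ba)=0$ for all $b\in A$ and all $a\in I$.

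Next I would show that this last condition is equivalent to $f\in I^{\bot}$, i.e. $f$ vanishes on $I$. One direction is immediate: if $f$ vanishes on $I$, then since $I$ is a left ideal we have $ba\in I$, hence $f(ba)=0$. For the converse I would use that $A$ is idempotent with local units: given $a\in I$, choose $e\in A$ with $ea=a$; then $f(a)=f(ea)=0$. Combining the two equivalences gives $F(f)\in r(I)\iff f\in I^{\bot}$, and since $F$ is a bijection this yields $F(I^{\bot})=r(I)$.

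The argument is essentially formal, so I do not expect a real obstacle; the only point that needs a little care is the passage from ``$f(ba)=0$ for all $b\in A$'' back to ``$f(a)=0$'', where the existence of local units for $A$ (equivalently, $AI=I$ for the left ideal $I$) is exactly what makes it work. Everything else is bookkeeping with the module isomorphism $F$.
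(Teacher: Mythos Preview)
Your argument is correct and is exactly the standard proof: use that $F$ is a left $A$-module isomorphism to translate $F(f)\in r(I)$ into $a\cdot f=0$ for all $a\in I$, and then use local units to identify this with $f\in I^{\bot}$. The paper does not give an independent proof of this lemma; it simply notes that the argument from \cite{VDZ} goes through unchanged in the weak multiplier Hopf algebra setting, and your write-up is precisely that argument (note the obvious typo in the statement: the condition should read $\forall\, a\in I$, not $\forall\, a\in A$).
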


Although we are working with regular weak multiplier Hopf algebras, the proof here is the same with the one in \cite{VDZ}.

\begin{proposition}
Let $A$ be a regular weak multiplier Hopf algebra with a faithful cointegral $h$, then every left proper left ideal has a non-zero right annihilator. Moreover,  every proper two-sided  ideal has a non-zero right annihilator.
\end{proposition}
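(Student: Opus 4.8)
The plan is to exploit the Frobenius structure established just above, together with Lemma \ref{bot}, to transfer the question about annihilators of ideals in $A$ to a question about the orthogonal complement operation $I \mapsto I^{\bot}$ inside $\widetilde{A}$. Recall that since $A$ has a faithful cointegral $h$, the previous propositions give us a module isomorphism $F \colon \widetilde{A} \to A$, and Lemma \ref{bot} tells us that $F(I^{\bot}) = r(I)$ for every left ideal $I$ of $A$. So the right annihilator of a left ideal $I$ is non-zero precisely when $I^{\bot} \neq 0$, i.e. precisely when there is a non-zero functional in $\widetilde{A}$ vanishing on all of $I$.

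First I would handle the left-ideal case. Let $I$ be a proper left ideal of $A$. I want to produce a non-zero $f \in \widetilde{A}$ with $f(I) = 0$; then $F(f) \in r(I)$ is non-zero and we are done. The key point is that $\widetilde{A}$ separates points of $A$ and is ``large'': every functional in $\widetilde{A}$ is of the form $\omega(\cdot\, a)$. Since $I$ is proper, there is some $b \in A \setminus I$. One then wants a functional $\omega \in A'$ and an element $a \in A$ such that $\omega(ca) = 0$ for all $c \in I$ but $\omega(ba) \neq 0$. Because $A$ has local units, pick $e$ with $eb = b$; then it is enough to find $\omega \in A'$ vanishing on the subspace $Ae \cap (\text{image of }I\text{ under right mult.\ by }e)$ but not on $be = b$ — more cleanly, work inside $\widetilde{A} = AA'$ directly: $\widetilde{A}$ is a unital left $A$-module, $I$ acts on it, and $I^{\bot}$ is exactly the annihilator in $\widetilde{A}$ of $I$ viewed through the duality $A \times \widetilde{A} \to \mathbb{C}$. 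Since this pairing is non-degenerate (both ways) and $I \subsetneq A$, the subspace $I^{\bot}$ of $\widetilde{A}$ must be non-zero: a linear functional on $A/I$ pulls back to an element of $A^*$, and one checks it can be realized inside $\widetilde{A}$ using a local unit that fixes a chosen representative outside $I$. Hence $I^{\bot} \neq 0$ and $r(I) = F(I^{\bot}) \neq 0$.

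For the two-sided case, let $J$ be a proper two-sided ideal. Then in particular $J$ is a proper left ideal, so by what we have just shown $r(J) \neq 0$, and since $J$ is two-sided its right annihilator (relative to $J$ as a left ideal) is again a right annihilator of the two-sided ideal — no extra work is needed beyond observing that the right annihilator of $J$ as a left ideal coincides with the right annihilator of $J$ as a set, which is automatic. So the same element $F(J^{\bot})$ does the job.

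The main obstacle I anticipate is the realization step: showing that a non-zero functional on $A/I$ can actually be found inside $\widetilde{A}$ rather than merely in the full dual $A'$. This is where the hypothesis that $A$ has local units is essential — given a representative $b \notin I$ and a local unit $e$ with $be = b$, any functional of the form $\omega(\cdot\, e)$ with $\omega \in A'$ lies in $\widetilde{A}$, and one must choose $\omega$ vanishing on $Ie$ (a proper subspace of $Ae$, since $b = be \in Ae \setminus Ie$) but non-zero at $b$. Once this linear-algebra point is set up carefully, the rest is a direct application of Lemma \ref{bot} and the Frobenius isomorphism, and the two-sided statement is an immediate corollary.
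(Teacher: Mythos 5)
Your overall strategy is the same as the paper's: reduce, via the Frobenius isomorphism $F$ and Lemma \ref{bot}, to showing that $I^{\bot}\neq 0$ for every proper left ideal $I$, and dispose of the two-sided case by observing that a two-sided ideal is in particular a left ideal. The problem lies in the realization step, where you try to exhibit a non-zero element of $I^{\bot}$.

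You pick $b\notin I$ and a local unit $e$ with $be=b$, and you want $\omega\in A'$ vanishing on $Ie$ with $\omega(b)\neq 0$; for this you assert $b\notin Ie$ ``since $b=be\in Ae\setminus Ie$''. That assertion is unjustified and false in general: $I$ is only a \emph{left} ideal, so right multiplication by $e$ need not map $I$ into $I$, and $Ie$ can contain elements outside $I$, including $b$ itself. Concretely, in $M_2(\mathbb{C})$ (which is a weak Hopf algebra with a faithful cointegral, being the groupoid algebra of the pair groupoid on two points) let $I$ be the left ideal of matrices with vanishing first column, $c=E_{12}\in I$, and $e=E_{21}+E_{22}$, which is idempotent; then $b:=ce=E_{11}+E_{12}\notin I$ and $be=b$, yet $b\in Ie$, so no $\omega$ can vanish on $Ie$ and be non-zero at $b$. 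The underlying issue is one of handedness: a functional of the form $\omega(\cdot\,e)$ vanishes on $I$ iff $\omega$ vanishes on $Ie$, and $Ie$ bears no useful relation to the left ideal $I$. (Your fallback remark that the pairing $A\times\widetilde{A}\to\mathbb{C}$ is non-degenerate does not help either: a separating subspace of $A'$ need not contain a non-zero functional vanishing on a prescribed proper subspace.) The paper's proof sidesteps all of this: it takes any non-zero $f\in A'$ with $f|_I=0$, which exists by pure linear algebra since $I$ is a proper subspace, and passes to $g=f(a\,\cdot)$; this $g$ still vanishes on $I$ precisely because $aI\subseteq I$, and $g\neq 0$ for some $a$ because $A$ is idempotent. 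If you insist on producing functionals of the literal form $\omega(\cdot\,a)$ appearing in the definition of $\widetilde{A}$, you need an additional argument identifying the two one-sided descriptions of the restricted dual; as written, your construction does not establish $I^{\bot}\neq 0$.
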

\begin{proof}
 By the lemma above, we only need to show that  $I^{\bot}$ is non-trivial when $I$ is a proper left ideal. Now assume that  $I$ is a proper left ideal. Choose $f\in A'$ so that $f\neq 0$ but $f|I=0$. For any $a\in A$ and let $g=f(a\cdot)$, so $g\in I^{\bot}$. We claim that $g\neq 0$ then we finish the proof. In fact, if $g=0$ for all $a$, then we have $f(ab)=0$ for any $a$ and $b$. This means that $f=0$, contradiction.
\end{proof}

\begin{proposition}
Let $A$ be a regular weak multiplier Hopf algebra. Denote $I=Ker \v$. If there exists a non-zero element $h\in A$ such that $ah=0$ for any $a\in I$, then $A$ has  a cointegral.
\end{proposition}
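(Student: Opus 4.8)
The plan is to show that the hypothesis forces $h$ to satisfy $ah=\v_t(a)h$ for all $a\in A$, which is exactly the defining property of a left cointegral. The key algebraic fact I would exploit is the elementary identity $a-\v_t(a)\,e\in\operatorname{Ker}\v$ whenever $e$ is a local unit for $a$; more precisely, writing $\D(a)=\sum a_{(1)}\o a_{(2)}$ one has $a=\sum \v(a_{(1)})a_{(2)}$ in the appropriate covered sense, and the map $a\mapsto a-\sum\v(a_{(1)})\,(\text{something in }\v_t(A))$ lands in $I=\operatorname{Ker}\v$. So the first step is to pin down a clean formula of the form $a - \v_t(a)e \in I$ for a suitable local unit $e$, using the counit axiom $(\v\o id)(\D(a)(1\o b))=ab$ together with the fact that $\v\circ\v_t=\v$ on $A$ and that $\v_t(a)e\in A$.

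Next I would argue as follows. Fix $a\in A$ and pick a local unit $e\in A$ with $ea=a$ and $e$ acting as a unit on $\v_t(a)$ as well (local units exist since $A$ is idempotent and non-degenerate). Then $a-\v_t(a)e$ lies in $I=\operatorname{Ker}\v$ — here one checks $\v(a-\v_t(a)e)=\v(a)-\v(\v_t(a))=\v(a)-\v(a)=0$, and more carefully that the element itself, not just its counit, sits in the left ideal structure we need; the cleanest route is to verify directly that for every $b\in A$ the element $b\,a - b\,\v_t(a)e$ can be written as $(\text{element of }I)\cdot(\text{local unit})$ using coassociativity and $\v\circ\v_t=\v$. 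Once $a-\v_t(a)e\in I$, the hypothesis $xh=0$ for all $x\in I$ gives $(a-\v_t(a)e)h=0$, i.e.\ $ah=\v_t(a)(eh)$. Finally, replacing $h$ by $eh$ causes no loss: by Remark ii) following the definition of cointegral, $eh$ is what we should really test, and in fact $eh=h$ may be arranged (or one notes $\v_t(a)eh=\v_t(a)h$ because $e$ is a unit on $\v_t(a)$), so $ah=\v_t(a)h$ for all $a\in A$, and $h\ne 0$ by hypothesis — hence $h$ is a left cointegral and $A$ has a cointegral.

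The main obstacle, and the step deserving the most care, is the covering/local-unit bookkeeping in establishing $a-\v_t(a)e\in I$ as an honest element of the left ideal $I$ rather than merely as an element whose counit vanishes — in the non-unital weak setting $\v_t(a)$ lives in $M(A)$, so one must be attentive that $\v_t(a)e\in A$ and that the identity $a=\sum\v(a_{(1)})a_{(2)}$ combined with $\sum a_{(1)}S(a_{(2)})a_{(3)}=a$ really does yield $a\equiv\v_t(a)e \pmod I$. Concretely I would multiply on the left by an arbitrary $b\in A$ to make all Sweedler legs covered, use $(\v\o id)\D = $ (counit relation) to collapse $\sum\v(b a_{(1)})a_{(2)}=ba$, and show $ba-b\v_t(a)e=\sum(ba_{(1)}-b a_{(1)}S(a_{(2)})a_{(3)}')\cdots$ is manifestly killed by $h$. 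I do not expect any deeper difficulty: once the membership $a-\v_t(a)e\in I$ is secured, the conclusion is immediate from the hypothesis.
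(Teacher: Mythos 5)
Your overall strategy is the same as the paper's: show that, modulo $I=\operatorname{Ker}\v$, the element $a$ agrees with a suitable $\v_t(a)$-translate, and then let the hypothesis kill the difference. However, the specific element you propose, $a-\v_t(a)e$, need not lie in $I$, and this is the step on which everything hangs. The counit identity available here is one-sided: $\v(ba)=\v(b\,\v_t(a))$ for $a,b\in A$, i.e.\ the target-map output must sit to the \emph{right} of the auxiliary factor. Your expression puts the local unit $e$ to the right of $\v_t(a)$ instead, and $\v(\v_t(a)e)$ need not equal $\v(a)$. Already in $\mathbb{C}G$ for a groupoid: take $a=\l_p$ and the left local unit $e=\l_{t(p)}+\l_r$ where $t(r)=t(p)$ and $s(r)\neq t(p)$; then $e\l_p=\l_p$ but $\v(\v_t(\l_p)e)=2\neq 1=\v(\l_p)$, so $a-\v_t(a)e\notin I$ and the hypothesis cannot be applied to it. You cannot escape this by choosing $e$ to be a unit for $\v_t(a)$ itself, since $\v_t(a)$ in general lives only in $M(A)$ and has no local unit. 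A smaller point: your concern about exhibiting the element inside ``the left ideal structure'' of $I$ is moot --- $I$ is just the kernel of the linear functional $\v$ (the paper explicitly remarks it is not an ideal), so membership in $I$ is purely the vanishing of the counit.

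The repair is small and lands exactly on the paper's proof. Since $\v_t(a)\in M(A)$ and $b\in A$, the product $b\,\v_t(a)$ already lies in $A$ with no local unit needed, and $\v(ba)=\v(b\v_t(a))$ gives $ba-b\v_t(a)\in I$ for all $a,b$. The hypothesis then yields $bah=b\v_t(a)h$ for every $b$, and non-degeneracy of the product cancels $b$, giving $ah=\v_t(a)h$; since $h\neq 0$, it is a left cointegral. (If you prefer your local-unit formulation, the correct element is $a-e\v_t(a)$ with $e$ a common left local unit for $a$ and for $\v_t(a)h\in A$; then $\v(e\v_t(a))=\v(ea)=\v(a)$, so the hypothesis gives $ah=e\v_t(a)h=\v_t(a)h$.)
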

\begin{proof}
Note that $ba-b\v_t(a)\in I$ for any $a, b\in A$. So we have $(ba-b\v_t(a))h=0$ for all $b$ and $a$. Cancel $b$ then we can get $ah=\v_t(a)h$, i.e. $k$ is  a left cointegral.
\end{proof}

Remark that the kernel of the counit is not a ideal in the case of weak multiplier Hopf algebras. The condition in this proposition also means that the kernel of $\v$ has a non-zero right annihilator.

\begin{proposition}
Let $A$ be a regular weak multiplier Hopf algebra with a faithful cointegral $h$, then $A$ is quasi-Frobenius.
\end{proposition}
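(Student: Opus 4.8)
The plan is to deduce the quasi-Frobenius property from the two ingredients that we have already established: $A$ is a Frobenius algebra (with module isomorphism $F\colon \widetilde A \to A$), and Lemma~\ref{bot} identifying annihilators of left ideals with orthogonal complements under $F$. Recall that an algebra is quasi-Frobenius when $lr(I)=I$ for every left ideal $I$ and $rl(J)=J$ for every right ideal $J$. So the task splits into a left-ideal statement and a symmetric right-ideal statement; I would prove the first and note the second is obtained by the same argument applied to the opposite algebra (the antipode $S$ gives an anti-isomorphism, so no new work is needed, and $S(h)$ is a faithful right cointegral).

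First I would set up the dual picture. Since $A$ is Frobenius via $F$, and $\widetilde A = AA'$ carries a natural right $A$-module structure as well (by $(\omega\cdot a)(b)=\omega(ab)$), one checks that $F$ intertwines the left module structures and that there is a companion bijection handling right ideals; the point is that $F$ transports the inclusion lattice of left ideals of $A$ onto the inclusion lattice of submodules of $\widetilde A$. Under this correspondence, for a left ideal $I\subseteq A$ we have from Lemma~\ref{bot} that $r(I)=F(I^{\bot})$, where $I^{\bot}=\{f\in\widetilde A\mid f(a)=0\ \forall a\in I\}$. The key algebraic fact I would then establish is the ``double annihilator'' identity at the level of $\widetilde A$: for any left ideal $I$, the set $(I^{\bot})^{\circ}:=\{a\in A\mid f(a)=0\ \forall f\in I^{\bot}\}$ equals $I$ itself. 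One inclusion is trivial; for the reverse, if $a\notin I$ then, because $\widetilde A$ separates points of $A$ and $I$ is a proper-in-$A/I$ subspace, there is $\omega\in A'$ vanishing on $I$ with $\omega(a)\neq 0$, and then $f=\omega(\cdot e)$ for a local unit $e$ of $a$ lies in $\widetilde A$, vanishes on $I$ (since $I$ is a left ideal, $eI\subseteq I$), hence $f\in I^\bot$, while $f(a)=\omega(a)\neq 0$. This is the heart of the matter and mirrors the reasoning already used in the preceding proposition about proper ideals having nonzero annihilators.

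Granting that, I would compute $lr(I)$. We have $r(I)=F(I^\bot)$. Now $l(r(I))=l(F(I^\bot))$; using that $F$ is a left-module isomorphism and a second application of Lemma~\ref{bot} (in the form relating left annihilators of the image of a submodule to an orthogonality condition), $l(F(I^\bot))$ is carried back to exactly $(I^\bot)^\circ = I$. Concretely: $x\in l(r(I))$ iff $x\,F(f)=0$ for all $f\in I^\bot$ iff $F(xf)=0$ for all such $f$ iff $xf=0$ for all $f\in I^\bot$ (as $F$ is injective) iff $f(\cdot x)\equiv 0$ on $A$ for all $f\in I^\bot$, which by the double-annihilator identity forces $x\in I$; the converse inclusion $I\subseteq lr(I)$ is automatic. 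Hence $lr(I)=I$. Running the mirror argument — or invoking $S$ — gives $rl(J)=J$ for every right ideal $J$, and $A$ is quasi-Frobenius.

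The main obstacle I anticipate is the bookkeeping around non-unitality: making sure that $I^\bot$ is large enough (this is where local units and the fact that $\widetilde A = AA'$ separates points of $A$ are essential), and making precise the ``right-ideal'' version of Lemma~\ref{bot}, since that lemma is stated only for left ideals. I would handle the latter cleanly by passing to the opposite algebra $A^{\mathrm{op}}$, which is again a regular weak multiplier Hopf algebra (via $S$) with faithful cointegral $S(h)$, so Lemma~\ref{bot} applies there verbatim and descends to the statement $rl(J)=J$ in $A$. The double-annihilator identity for subspaces of $A$ that are left ideals is genuinely the only nontrivial step, and it is essentially already present in the proof of the proposition on proper ideals just above.
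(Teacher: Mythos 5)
Your overall strategy --- reduce everything to the Frobenius isomorphism $F$, Lemma \ref{bot}, and a double-annihilator identity $(I^{\bot})^{\circ}=I$ --- is the same as the paper's; the paper simply runs the argument by contradiction (if $I\neq lr(I)$, produce $g\in I^{\bot}$ not vanishing on $lr(I)$ and deduce from $xF(g)=0$ that $g$ does vanish there), whereas you run the equivalent computation directly. Your chain $x\in lr(I)\Leftrightarrow xF(f)=0\ \forall f\in I^{\bot}\Leftrightarrow xf=0\ \forall f\in I^{\bot}\Leftrightarrow f(x)=0\ \forall f\in I^{\bot}$ (using $r(I)=F(I^{\bot})$, that $F$ is an injective left module map, and local units at the last step) is fine.

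The step that fails as written is your proof of the nontrivial inclusion $(I^{\bot})^{\circ}\subseteq I$. Given $a\notin I$ you take $\omega\in A'$ with $\omega|_I=0$ and $\omega(a)\neq 0$ and set $f=\omega(\,\cdot\, e)$, claiming $f$ vanishes on $I$ ``since $I$ is a left ideal, $eI\subseteq I$''. But $f(b)=\omega(be)$, so what is needed is $Ie\subseteq \ker\omega$, and for a \emph{left} ideal $I$ the set $Ie$ need not be contained in $I$; the containment $eI\subseteq I$ is irrelevant to the functional $\omega(\,\cdot\, e)$. So your $f$ is in $\widetilde{A}$ but is not shown to lie in $I^{\bot}$. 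The paper makes the opposite choice, $g=f(e\,\cdot\,)$, for which vanishing on $I$ is immediate from $eI\subseteq I$, but for which membership in $\widetilde{A}=\{\omega(\,\cdot\, a)\}$ is then the point that must be justified (the paper asserts it without argument). You need to either adopt $f(e\,\cdot\,)$ and address why it lies in $\widetilde{A}$, or otherwise repair the construction; as it stands this is the one genuine gap. Separately, your treatment of the right-ideal half $rl(J)=J$ via the opposite algebra and $S(h)$ is only sketched, but since the paper's own proof omits that half entirely I do not count it against you.
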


\begin{proof}
Assume that $h$ is the faithful cointegral and $F$ is the module isomorphism from $\widetilde{A}$ to $A$ which is defined as $F(f)=(id\o S(f))\D(h)$. Let $I$ be a left ideal of $A$. We have to show $I=lr(I)$. The inclusion $I\subseteq lr(I)$ is obviously. If $I\neq lr(I)$, then we can find $f\in A'$ such that $f$ is zero on $I$ but not zero on $lr(I)$. Assume that $f(a)\neq 0$ for some $a\in lr(I)$. Now take $e$ so that $ea=a$, then we can define  $g=f(e\cdot)$ such that $g\in I^{\bot}$ and $g$ is not zero on $lr(I)$. Now $g\in \widetilde{A}$. For any $x\in lr(I)$, by the Lemma \ref{bot}, we have $xF(g)=0$. It means that $g$ is zero on $lr(I)$. Contradiction.
\end{proof}

Conversely, if $A$ is quasi-Frobenius, what can we get? We except to get that $A$ is discrete, but this is not true here.

\begin{remark}
Note that when $A$ is regular multiplier Hopf algebra, we have three equivalent descriptions, see Theorem 3.3, 3.6, and 3.8 in \cite{VDZ}. As we see now, they are not equivalent relations any more.

 Weak multiplier Hopf algebra of discrete type is  a regular weak multiplier Hopf algebra with a faithful set of cointegral. In this section we only consider the special case of a single faithful cointegral. We still have some examples which do not have a single faithful integral. So it is expected to have  a more general theory which could include more examples.

\end{remark}

For multiplier Hopf algebras, we have compact type and discrete type.  Let $A$ be a regular multiplier Hopf algebra with integrals, we call $A$ of compact type if $A$ has an identity, we call $A$ of discrete type if there is  a cointegral. And we have a duality between them. If $A$ is discrete, then the dual $\widehat{A}$ is compact. Conversely, if $A$ is compact, then $\widehat{A}$ is discrete. These results can be found in \cite{VD1}.

Now we will generalize the above results to weak multiplier Hopf algebras.

\begin{definition}
Let $(A, \D)$ be a regular weak multiplier Hopf algebra with a faithful set of integrals (Now we will call $A$ an algebraic quantum groupoid). We call $A$ of compact type if $A$ has an identity.
\end{definition}

\begin{proposition}
If $A$ is compact, then the dual $\widehat{A}$ is discrete.
\end{proposition}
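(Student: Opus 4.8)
The plan is to produce a faithful cointegral for $\widehat{A}$ directly from the unit of $A$, mimicking the classical fact that the counit of a compact quantum group becomes the distinguished ``evaluation at the identity'' cointegral on the discrete dual. Since $A$ is compact, it has an identity $1$, and $A$ has a faithful set of integrals; fix a left integral $\vp$ and a right integral $\psi$ on $A$. Recall that the dual $\widehat{A}$ is realized as $\{\vp(\cdot a)\mid a\in A\}$ with its dual structure, and that the target map $\widehat{\v_t}$ and source map $\widehat{\v_s}$ of $\widehat{A}$ are computed from the comultiplication of $A$ (which, dually, is multiplication in $A$) together with the counit $\widehat{\v}$ of $\widehat{A}$, namely $\widehat{\v}(\vp(\cdot a)) = \vp(a)$ up to the usual normalization. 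The first step is to single out the element $\omega_0 \in M(\widehat{A})$ that plays the role of the classical ``$\delta_e$'': concretely I would take $\omega_0$ to be the counit $\v$ of $A$, viewed as a multiplier of $\widehat{A}$ via the pairing, and check that it does lie in $M(\widehat{A})$ because $A$ has an identity (so $\v$ pairs nondegenerately with the relevant left legs).

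Next I would verify the cointegral identity $\widehat{a}\,\omega_0 = \widehat{\v_t}(\widehat{a})\,\omega_0$ for all $\widehat{a}\in\widehat{A}$. Writing $\widehat{a} = \vp(\cdot a)$ and unwinding the product of $\widehat{A}$ in terms of $\D_A$ and $\vp$, the left-hand side becomes a functional of the form $b\mapsto \vp\big((\text{something involving }a,\,b,\,1)\big)$, and the key computational point is that evaluating against the unit of $A$ collapses the extra leg: using $\v(xy) = $ the counit property and the fact that $\D_A(1)=E$, one reduces $\widehat{a}\,\omega_0$ to the functional $b\mapsto \vp\big(\v_t^A(\cdots)\big)$, which is exactly $\widehat{\v_t}(\widehat{a})\,\omega_0$ by the definition of the target map on $\widehat{A}$. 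The nonzero\-ness of $\omega_0$ is immediate since $\v\ne 0$ and the pairing between $A$ and $\widehat{A}$ is nondegenerate.

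Finally, for faithfulness of the cointegral I would use the faithfulness of the set of integrals of $A$ together with Proposition~\ref{faithful} applied to $\widehat{A}$: one shows that the left and right legs of $\widehat{\D}(\omega_0)$ are all of $\widehat{A}$. Dualizing, $\widehat{\D}(\omega_0)(1\o \widehat{b})$ spanning $\widehat{A}\o\widehat{A}$ amounts to the statement that elements $\vp(\cdot a)$ with $a$ running over $A$ (together with the multiplications forced by $1\in A$) exhaust $\widehat{A}$ on the appropriate leg — which is true precisely because $1\cdot A = A$ and $\vp$ is faithful. So $\omega_0$ is a faithful cointegral on $\widehat{A}$, i.e., $\widehat{A}$ is discrete.

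The main obstacle I anticipate is bookkeeping with coverings in the multiplier algebra $M(\widehat{A})$: since $\omega_0 = \v$ need not lie in $\widehat{A}$ itself (the source/target algebras of $\widehat{A}$ need not contain the identity), the identity $\widehat{a}\,\omega_0 = \widehat{\v_t}(\widehat{a})\,\omega_0$ must be checked after multiplying by a further element $\widehat{c}\in\widehat{A}$ on the appropriate side and then cancelling by nondegeneracy, and one has to be careful that the manipulations ``$\D_A(1)=E$'' and the counit property are applied only to genuinely covered expressions. The algebra itself is routine once the coverings are arranged; the conceptual content is entirely contained in the observation that the counit of a unital $A$ is the distinguished cointegral of the dual.
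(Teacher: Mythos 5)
Your choice of candidate cointegral is wrong, and the error is fatal rather than cosmetic. The product of $\widehat{A}$ is dual to the coproduct of $A$, so for $\omega_0=\v$ (the counit of $A$, viewed in $M(\widehat{A})$) one has
$$(\widehat{a}\,\omega_0)(b)=\widehat{a}(b_{(1)})\v(b_{(2)})=\widehat{a}(b),$$
i.e.\ $\v$ is the \emph{identity} of $M(\widehat{A})$, not a cointegral. Your proposed identity $\widehat{a}\,\omega_0=\widehat{\v_t}(\widehat{a})\,\omega_0$ would then read $\widehat{a}=\widehat{\v_t}(\widehat{a})$ for all $\widehat{a}\in\widehat{A}$, which forces $\widehat{A}=\widehat{A}_t$ and is false except in degenerate cases. (The paper itself uses $\v\in\widehat{A}$ to prove that a dual is \emph{compact}, i.e.\ unital --- the opposite conclusion from the one you want here.) In the model case $A=K(G)$ for $G$ a finite group, the cointegral of $\widehat{A}=\mathbb{C}[G]$ is $\sum_g g$, i.e.\ the Haar integral of $A$ regarded as an element of $\widehat{A}$, not the counit $\delta_e$.

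The correct candidates are the integrals themselves: since $A$ is unital, every left integral $\vp$ satisfies $\vp=\vp(\cdot\,1)\in\widehat{A}$, and the computation
$$(\omega\vp)(a)=\omega(a_{(1)})\vp(a_{(2)})=\omega\bigl(\v_t(a_{(1)}\vp(a_{(2)}))\bigr)\vphantom{,}$$
(using that $(id\o\vp)\D(a)\in A_t$) yields $\omega\vp=\widehat{\v_t}(\omega)\vp$. This is exactly the paper's argument. Note also that even after this correction you should work with the whole faithful \emph{set} $\int$ of integrals as a set of cointegrals (checking that the left and right legs of $\widehat{\D}(\int)$ exhaust $\widehat{A}$, which follows from faithfulness of $\int$ on $A$); a single integral need not be faithful, so your plan of exhibiting one faithful cointegral $\omega_0$ would not go through in general. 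Your concluding remarks about coverings in $M(\widehat{A})$ are moot once the element is taken inside $\widehat{A}$ itself.
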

\begin{proof}

Suppose that $A$ is compact. Let $\int$ be the faithful set of integrals on $A$. Note that now $A$ is a weak Hopf algebra and $\int\subset \widehat{A}$. In fact for any $\omega\in \widehat{A}, a\in A$, we have
\begin{eqnarray*}
\<a,\widehat{\v_t}(\omega)\vp\> &=& \<\omega\rhd a, \widehat{\v_t}(\omega)\>\\
                            &=& \<\v_t(a_{(1)}\vp(a_{(2)})), \omega\>\\
                            &=& \<a_{(1)}\vp(a_{(2)}), \omega\>\\
                            &=& \omega(a_{(1)})\vp(a_{(2)})\\
                            &=& (\omega\vp)(a)
\end{eqnarray*}
In the third equality, we use the fact that $(id\o \vp)\D(a)$ belongs to the target algebra. Next we will show that the  $\int$ is a faithful set of cointegral.
For any $\vp\in \int, \omega=\vp'(a\cdot )\in \widehat{A}$, we have
$$\widehat{\D}(\vp)(1\o \vp')=\sum \vp(\cdot S(a_{(1)}))\o \vp'(a_{(2)}\cdot ).$$
Since every element in $A$ is a linear combination of elements of the form $\sum S(a_{(1)})\vp(a_{(2)}c)$ where $a,c\in A$. So the left leg of $\D(\int)$ is all of $\widehat{A}$. Similarly for the right leg.
\end{proof}

\begin{proposition}
Let $A$ be an algebraic quantum groupoid. If $A$ has a single faithful cointegral, then $\widehat{A}$ is compact.
\end{proposition}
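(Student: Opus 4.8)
The plan is to identify the counit $\v$ of $A$ with an identity of $\widehat A$; since $\widehat A$ is again an algebraic quantum groupoid by the duality recalled in Section~2, producing an identity on it is precisely what ``compact type'' requires. Almost everything needed is already contained in Proposition~\ref{unit}: as soon as we know that the integral space $\int_L$ attached to the faithful cointegral $h$ is faithful, that proposition gives $\widetilde A = \widehat A$, $\v \in \widehat A$, and that $\widehat A$ is a weak Hopf algebra; a weak Hopf algebra is unital, so $\widehat A$ has an identity and is therefore of compact type. Thus the whole argument reduces to checking that $\int_L$ is faithful.

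To do this I would first note that ``a single faithful cointegral'' means the space $H$ of left cointegrals is one-dimensional, $H = \mathbb{C}h$, so that $A$ is of discrete type and Proposition~\ref{integral} applies with $H = \mathbb{C}h$: a functional $\vp \in A'$ is a left integral on $A$ if and only if $(id \o \vp)\D(h) \in A_t$. Hence the space $\int_L$ of integrals produced in Theorem~\ref{theorem} — which by construction consists exactly of those $\vp$ with $(id \o \vp)\D(h) \in A_t$ — is nothing but the full space of left integrals on $A$. Since $A$ is assumed to be an algebraic quantum groupoid, it carries a faithful set of integrals, and that set is contained in $\int_L$; enlarging a faithful set of integrals yields again a faithful set, so $\int_L$ is both left and right faithful. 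Applying Proposition~\ref{unit} now completes the proof.

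The argument seems to have no serious obstacle; the only delicate point is the bookkeeping just described — making sure that ``faithful set of integrals'' in the definition of algebraic quantum groupoid is exactly the hypothesis of Proposition~\ref{unit}, and that replacing the given faithful set by the a priori larger space $\int_L$ (via Proposition~\ref{integral}) does not destroy faithfulness, which it does not. If one preferred a more self-contained route, one could instead use Proposition~\ref{counit}, which gives $\v = \vp_h(\,\cdot\, h)$ for the left integral $\vp_h$ with $(id \o \vp_h)\D(h) = 1$, together with the inclusion $\widetilde A \subseteq \widehat A$ obtained as in the proof of Proposition~\ref{unit}, to place $\v$ in $\widehat A$ directly; but this ultimately retraces the same steps. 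No computation beyond Propositions~\ref{integral}, \ref{counit} and \ref{unit} appears to be required.
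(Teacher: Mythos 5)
Your argument is correct, and the ``more self-contained route'' you sketch at the end is in fact precisely the paper's own proof: the paper takes the left integral $\vp_h$ with $(id\o \vp_h)\D(h)=1$ and invokes Proposition \ref{counit} to write $\v=\vp_h(\,\cdot\, h)$, which lies in $\widehat{A}$ by the very definition of the dual (since $h\in A$ and $\vp_h$ is a left integral), so $\widehat{A}$ is unital and hence of compact type. Your primary route through Proposition \ref{unit} is a mild detour that proves more than is needed (namely that $\widehat{A}$ is a weak Hopf algebra and that $\widetilde{A}=\widehat{A}$); the extra obligation it creates --- checking that $\int_L$ is faithful --- you discharge correctly, by noting via Proposition \ref{integral} that $\int_L$ is the set of \emph{all} left integrals and therefore contains the faithful set supplied by the algebraic quantum groupoid hypothesis, and that enlarging a faithful set preserves faithfulness. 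One small over-claim: ``a single faithful cointegral'' does not mean that the space $H$ of left cointegrals is one-dimensional; it means that the singleton $\{h\}$ already satisfies conditions (1) and (3) of Proposition \ref{faithful}, which is all you actually use, so the slip is harmless.
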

\begin{proof}
Let $h$ be the the faithful cointegral.  Then we can get the integral $\vp_h$. Now by  Proposition \ref{counit} we can get  $\v \in \widehat{A}$, so the dual is  compact.
\end{proof}

Especially, if $A$ has a faithful integral $\vp$ and a faithful cointegral $h$, then the dual $\widehat{A}$ is compact. Let $\vp_h$ be the integral so that $(id\o \vp_h)\D(h)=1$. From the Proposition 1.8 in \cite{VW4} we know that there is an element $y\in A_s$ so that $\vp_h(a)=\vp(ay)$ for all $a$. Now we have
$$\v(a)=\vp_h(ah)=\vp(ahy),$$
so $\v\in \widehat{A}.$

\begin{proposition}\label{f.d.}
If $A$ is both of compact type and of discrete type, then $A$ is finite-dimensional.
\end{proposition}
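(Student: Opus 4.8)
The plan is to extract from the two hypotheses only what is actually needed: compactness will be used just to conclude that $A$ is unital, and discreteness will be used via the structural description of $M(A)$ obtained earlier in this section. No computation is involved; the whole point is a double-dual argument.

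First I would record the consequences of $A$ being of compact type. Since $A$ has an identity, $M(A)=A$, and moreover $\widetilde{A}=A'$: by definition $\widetilde{A}=\{\omega(\cdot a)\mid a\in A,\ \omega\in A'\}$, and taking $a=1$ gives $\omega(\cdot 1)=\omega$, so every linear functional on $A$ already lies in $\widetilde{A}$. Thus compactness collapses $\widetilde{A}$ to the full dual $A'$ and $M(A)$ to $A$.

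Next I would invoke discreteness through the earlier proposition asserting a bijection between $M(A)$ and $(\widetilde{A})'$ (concretely, $x\mapsto(\omega\mapsto\omega(x))$ is linear, injective because $\widetilde{A}$ separates the points of $M(A)$, and surjective by Proposition~\ref{bijective-H}; here $\omega(x)$ for $\omega\in\widetilde{A}$, $x\in M(A)$ is the well-defined pairing already introduced in the text). Combining this with the previous paragraph, $A=M(A)$ is linearly isomorphic to $(\widetilde{A})'=(A')'=A''$. Since the canonical embedding $V\hookrightarrow V''$ of a vector space into its double dual is an isomorphism exactly when $V$ is finite-dimensional — for infinite-dimensional $V$ one has the strict inequalities $\dim V<\dim V'<\dim V''$, so $V$ cannot even be abstractly isomorphic to $V''$ — the isomorphism $A\cong A''$ forces $\dim A<\infty$.

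I do not expect any real obstacle here: the discrete-type identification $M(A)\cong(\widetilde{A})'$ holds under the discrete-type hypothesis alone, and the only points needing a word of care (that $\omega(x)$ makes sense for $\omega\in\widetilde{A}$, $x\in M(A)$, and that $\widetilde{A}=A'$, $M(A)=A$ when $A$ is unital) are routine and already available. The ``idea'', such as it is, is simply to notice that once $\widetilde{A}$ has been replaced by $A'$ and $M(A)$ by $A$, the statement $M(A)\cong(\widetilde{A})'$ becomes ``$A$ is isomorphic to its own double dual'', which pins down finite-dimensionality.
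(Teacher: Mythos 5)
Your proof is correct, but it takes a genuinely different route from the paper's. The paper argues directly: by Proposition \ref{multiplier} (compactness puts $1$ in $M(A)=A$, discreteness makes the map onto $M(A)$) one writes $1=(\mathrm{id}\otimes\omega_1)\Delta(h_1)$, and then applying $\mathrm{id}\otimes\omega_1$ to the identity $(a\otimes 1)\Delta(h_1)=(1\otimes S^{-1}(a))\Delta(h_1)$ yields $a=\sum h_{1(1)}\,\omega_1(S^{-1}(a)h_{1(2)})$, which exhibits an explicit finite spanning set for $A$ (the first legs of $\Delta(h_1)$ against the fixed functional $\omega_1$). You instead specialize the abstract bijection $M(A)\cong(\widetilde{A})'$ to the compact case, where $M(A)=A$ and $\widetilde{A}=A'$, obtaining $A\cong A''$, and then invoke the Erd\H{o}s--Kaplansky-type fact that an infinite-dimensional space satisfies $\dim V<\dim V'<\dim V''$ and so cannot be isomorphic to its double dual. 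Both arguments rest on the same surjectivity input (Proposition \ref{multiplier}, which also underlies the $M(A)\cong(\widetilde{A})'$ proposition you cite), so neither is logically stronger; the paper's version is more self-contained and constructive, producing the spanning set and hence in principle a dimension bound, while yours is shorter and more conceptual but outsources the finite-dimensionality to a set-theoretic fact about dual spaces. The two side points you flag (linearity/well-definedness of $x\mapsto(\omega\mapsto\omega(x))$ on $\widetilde{A}$, and the collapse $\widetilde{A}=A'$ when $1\in A$) are indeed routine and already covered by the paper's local-unit argument.
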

\begin{proof}
By Proposition \ref{multiplier}, there exists  cointegral $h_1$ and  $\omega_1\in A'$ such that $(id\o \omega_1)\D(h_1)=1$. For any $a\in A$, we  have
$$(a\o 1)\D(h_1)=(1\o S^{-1}(a))\D(h_1).$$
 Applying $id\o \omega_1$ to the equality then we can get
$$a=\sum h_{1(1)}\omega_1(S^{-1}(a)h_{1(2)}).$$
This shows that $A$ is spanned by only finitely many elements, then $A$ is finite dimensional.
\end{proof}

It is easy to obtain the following results.

\begin{corollary}
Let $A$ be a regular weak multiplier Hopf algebra. If $A$ is both compact and discrete, then $A$  is a finite dimensional weak Hopf algebra.
\end{corollary}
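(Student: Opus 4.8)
The plan is to combine Proposition \ref{f.d.} with the structural fact, already used in the proof of Proposition \ref{unit}, that a weak multiplier Hopf algebra possessing an identity is automatically a weak Hopf algebra (Proposition 4.12 in \cite{VW2}). So the corollary is purely a matter of stringing together two results already available.

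First I would unpack the hypothesis. Saying that $A$ is compact means, by definition, that $A$ is an algebraic quantum groupoid having an identity; saying that $A$ is discrete means that $A$ has a faithful set of left cointegrals, which is exactly the standing hypothesis of Section 3 under which Proposition \ref{multiplier}, and hence Proposition \ref{f.d.}, is proved. Applying Proposition \ref{f.d.} directly gives that $A$ is finite-dimensional. Then, since $A$ is compact it carries an identity $1\in A$, so $A$ is a unital finite-dimensional non-degenerate idempotent algebra equipped with a full coproduct, counit, antipode and the idempotent $E$; by the cited unital result of \cite{VW2} this forces $A$ to be a weak Hopf algebra. Combining the two conclusions yields that $A$ is a finite-dimensional weak Hopf algebra, as asserted.

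There is essentially no obstacle here; the only point to watch is bookkeeping about which flavour of ``discrete'' is in force. If one works with the single-faithful-cointegral version of Section 4, then Proposition \ref{f.d.} (via its proof) applies verbatim after extracting a cointegral $h_1$ and a functional $\omega_1$ with $(id\o\omega_1)\D(h_1)=1$; if one works with the more general faithful-set version, one first invokes Proposition \ref{multiplier} to produce such a pair $(h_1,\omega_1)$ and then runs the same spanning argument, using formula (7) of Proposition \ref{collection} (or its analogue from Proposition \ref{1}) to write $a=\sum h_{1(1)}\,\omega_1(S^{-1}(a)h_{1(2)})$. Either way the span of $\{h_{1(1)}\}$ is finite, finiteness follows, and the weak Hopf algebra statement is then immediate from unitality.
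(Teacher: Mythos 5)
Your proposal is correct and is exactly the argument the paper intends (the paper states the corollary without proof as an easy consequence of Proposition \ref{f.d.}): finite-dimensionality comes from Proposition \ref{f.d.}, and unitality plus Proposition 4.12 of \cite{VW2} gives that $A$ is a weak Hopf algebra. Your remark that formula (7) of Proposition \ref{collection} is just a restatement of item (2) of Proposition \ref{1}, so the spanning argument works for any left cointegral from a faithful set, is a correct and useful clarification of the bookkeeping.
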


\section{Examples and applications}
First note that there is a relation between the integrals on the dual and the cointegrals in the original weak multiplier Hopf algebra.
\begin{proposition}
Let $A$ be an algebraic quantum groupoid. If $h$ is a cointegral in $A$, then $h$ is a left integral on $\widehat{A}$.
\end{proposition}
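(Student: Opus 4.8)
The plan is to unpack what it means for $h$ to be a left integral on the dual $\widehat{A}$ and then recognize that this is exactly a rephrasing of the cointegral property of $h$ in $A$, read through the pairing between $A$ and $\widehat{A}$. Recall that $\widehat{A}=\{\vp(\cdot a)\mid a\in A,\ \vp \text{ a left integral}\}$, and an element $x\in M(A)$ acts as a linear functional on $\widehat{A}$ via $\langle x,\om\rangle=\om(x)$ for $\om\in\widehat{A}$ (this is the evaluation pairing used in the duality of algebraic quantum groupoids). So ``$h$ is a left integral on $\widehat{A}$'' means $(\mathrm{id}\o h)\widehat{\D}(\om)\in\widehat{A}_t$ for all $\om\in\widehat{A}$, equivalently $\langle b\o h,\widehat{\D}(\om)\rangle$ can be computed as the action of an element of the target algebra of $\widehat{A}$.

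First I would translate the coproduct on $\widehat{A}$ back to operations on $A$: by construction of the dual, $\langle a\o b,\widehat{\D}(\om)\rangle=\langle ab,\om\rangle$ for $a,b\in A$ and $\om\in\widehat{A}$, which is just the statement that the multiplication of $A$ is dual to the comultiplication of $\widehat{A}$. Then I would compute, for $a\in A$ and $\om=\vp(\cdot c)\in\widehat{A}$,
$$
\big\langle a, (\mathrm{id}\o h)\widehat{\D}(\om)\big\rangle = \big\langle a\o h, \widehat{\D}(\om)\big\rangle = \langle ah,\om\rangle,
$$
where $ah\in A$ since $h\in M(A)$. Now invoke the defining property of the left cointegral, $ah=\v_t(a)h$, so $\langle ah,\om\rangle=\langle \v_t(a)h,\om\rangle$. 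The point is then to identify $\om\mapsto \langle \v_t(\cdot)h,\om\rangle$ as multiplication by an element of $\widehat{A}_t$: since $\v_t$ is the target map of $A$, its transpose is governed by the counit-like structure on $\widehat{A}$, and the map $a\mapsto\v_t(a)h$ factors through $\v_t(A)$, which forces $(\mathrm{id}\o h)\widehat{\D}(\om)$ to lie in the target algebra $\widehat{A}_t=\{x\in M(\widehat{A})\mid \widehat{\D}(x)=(x\o 1)\widehat{E}\}$. This is the step I would carry out most carefully: one has to check that the functional $a\mapsto\vp(\v_t(a)hc)$ is, as a multiplier of $\widehat{A}$, of the form required by the definition of $\widehat{A}_t$, using the explicit description of $\widehat{E}$ and of the target algebra of the dual from \cite{VW4}.

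The main obstacle I anticipate is precisely bookkeeping the covering/multiplier issues on the dual side: $h$ only lives in $M(A)$, $\om$ only lives in $\widehat{A}$ (not $M(\widehat{A})$), and $\widehat{\D}(\om)$ is itself a multiplier, so every identity has to be covered by legitimate elements of $\widehat{A}$ on both tensor legs before evaluating. I would handle this by pairing throughout with elements $a\o b\in A\o A$ (which is legitimate since the pairing $A\times\widehat{A}$ is non-degenerate and $\widehat{A}\o\widehat{A}$ pairs with $A\o A$), reducing every multiplier identity on $\widehat{A}$ to an identity of scalars involving $ah=\v_t(a)h$ in $A$, and only at the very end re-interpreting the resulting functional as a multiplier of $\widehat{A}$ landing in $\widehat{A}_t$. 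Non-triviality of the resulting functional follows because $h\neq 0$ and the pairing is faithful, so $h$ is genuinely a (non-zero) left integral on $\widehat{A}$.
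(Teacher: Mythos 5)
Your route is viable and genuinely different in organization from the paper's. The paper does not verify the integral condition on $\widehat{A}$ directly; instead it proves the equivalent dualized statement (a cointegral $g$ in $\widehat{A}$ is a left integral on $A$) and relies on biduality $A\cong\widehat{\widehat{A}}$. The payoff of that trick is that the target-algebra membership to be checked is membership in $A_t$, which is immediate: from $(fg)(a)=(\widehat{\v_t}(f)g)(a)$ one gets $f\bigl((\mathrm{id}\o g)\D(a)\bigr)=f\bigl((\v_t\o g)\D(a)\bigr)$ for all $f\in\widehat{A}$, hence $(\mathrm{id}\o g)\D(a)=(\v_t\o g)\D(a)\in A_t$ because $\v_t$ visibly lands in $A_t$. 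Your direct computation $\langle a,(\mathrm{id}\o h)\widehat{\D}(\om)\rangle=\om(ah)=\om(\v_t(a)h)$ uses exactly the same engine (the cointegral identity read through the pairing), so the two arguments are mirror images of each other.

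The one substantive concern is that the step you explicitly defer --- showing that the functional $a\mapsto\om(\v_t(a)h)$ is, as a multiplier of $\widehat{A}$, an element of $\widehat{A}_t$ --- is not a formality: it is precisely the point where all the content sits, and it is exactly what the paper's dualization is designed to avoid. Unlike $A_t$, which is concretely $M(\v_t(A))$, the set $\widehat{A}_t$ is defined by $\widehat{\D}(X)=(X\o 1)\widehat{E}$, and identifying ``functionals on $A$ factoring through $\v_t$ on the left'' with $\widehat{A}_t$ requires the description of the base algebras of the dual from \cite{VW4} (or an explicit verification against $\widehat{E}$). As written, your proof is an outline with the decisive lemma cited but not pinned down; to make it complete you should either state and reference that identification precisely, or simply adopt the paper's dualized formulation, where the corresponding step is a one-line observation. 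Your remarks on covering and on non-triviality of $h$ as a functional are correct and match what the paper does implicitly.
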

\begin{proof}
We are going to prove an equivalent statement. Assume that $g$ is a cointegral in $\widehat{A}$, now we will show that $\vp$ is a left integral on $A$. For any $f\in \widehat{A}, a\in A$, we have $(fg)(a)=(\v_t(f)g)(a)$. Then $f(a_{(1)}g(a_{(2)}))=f(\v_t(a_{(1)})g(a_{(2)}))$, so $(id\o g)\D(a)\in A_t$. Note that any element in $\widehat{A}$ is of the form $\vp(b\cdot)$ where $\vp$ is a integral and $b\in A$, then $a_{(1)}$ can be covered by $b$.
\end{proof}

\begin{remark}
 Conversely, this is not true. If $\vp$ is the integral on $A$, in general we do not have $\vp\in \widehat{A}$. When $A$ is a finite  Hopf algebra, then we have $\vp\in \widehat{A}$. Note that in this case $\widehat{A}$ is equal to $A'$. When $A$ is a regular multiplier Hopf algebra without unit, then $\vp\notin \widehat{A}$.
\end{remark}

 Now let us consider two basic examples associated with a groupoid.
\begin{example}
Let $G$ be any groupoid. Let $A$ be the weak multiplier Hopf algebra as introduced in Example \ref{example1}. It is the space of complex functions on $G$ with finite support and pointwise product. Recall that $A_s$ and $A_t$ are the algebra of all complex functions on the units.  As we see, the element $h$ is a left cointegral if and only if $h$ has finite support in the units. And in this case the left integral and right integrals coincide. Let $H$ be the space of  all left cointegrals. It is easy to find that $H$ is a faithful space, so $K(G)$ is discrete. In fact, for any $h\in H, y\in A_s$, if $hy=0$ then we must have $y=0$, otherwise $h$ is zero.

 We can use this example to illustrate the map $\g$ in Proposition \ref{gamma}. For any $p\in G$, we have
 \begin{eqnarray*}
 \g(h)(p)&=& h_{(1)}(p)S(\v_s(h_{(2)}))(p)\\
         &=& h_{(1)}(p)\v_s(h_{(2)})(p^{-1})\\
         &=& h_{(1)}(p)h_{(2)}(pp^{-1})\\
         &=& \D(h)(p, pp^{-1})\\
         &=& h(p).
 \end{eqnarray*}
So now we have $\g=id$.

We can also consider the dual of $A$. It is the algebra $\mathbb{C}G$ of complex functions with finite support on $G$ with convolution product. We have a canonical imbedding $p\mapsto \l_p$ from $G$ to $\mathbb{C}G$. The target and source maps are given by $\v_t(\l_p)=\l_e$ where $e=t(p)$ and $\v_s(\l_p)=\l_e$ where $e=s(p)$ for any $p\in G$. Here $t(p)$ is used for the target of $p$ and $s(p)$ denotes the source of $p$. For any $p, q\in G$, we have $\l_p\l_q=\l_{pq}$ when $pq$ is defined, otherwise is $0$. For any $g\in \widehat{A}_s, f\in A$, consider the map $\vp: f\mapsto \sum_ug(u)f(u)$. Here we take the sum over all elements $u\in G$ and this is well-defined because $f$ has finite support. If $\vp\in \mathbb{C}G$, then it is a left cointegral in $\mathbb{C}G$. And  any cointegral in $\mathbb{C}G$ is of this form. Take $\l_p\in \mathbb{C}G$, we have
$$(\l_p\vp)(f)=\sum_u g(u)f(pu).$$
Now we only take the sum over elements $u$ with the property $t(p)=s(u)$. On the other hand we have
$$(\v_t(\l_p)\vp)(f)=(\l_e\vp)(f)=\sum_u g(u)f(u).$$
Now $u$ must satisfy $s(u)=e=t(p)$. So $\vp$ is a left cointegral indeed. If we take $g=1$, then $\vp_1: f\mapsto \sum_uf(u)$ is a faithful cointegral. We also refer to Section 3 in \cite{VW4} where the integrals on $K(G)$ and $\mathbb{C}G$ were studied.

\end{example}

{\it The special cases}\newline

First let us consider the two special cases: multiplier Hopf algebras and weak Hopf algebras. They both had been studied extensively in the literature. We will recall some results here. On the other hand, our theory cover the following results.

Multiplier Hopf algebras of discrete type has been studied in \cite{VDZ}. Let $A$ be a regular multiplier Hopf algebra. If there exists a cointegral $h$ in $A$, then the cointegral is unique and faithful. Then we can construct the corresponding integral $\vp_h$ on $A$.  By Proposition 3.4 in \cite{VD1}, we know that every integral on $A$ is faithful. And $A$ is of discrete type if and only if $A$ is quasi-Frobenius, see Theorem 3.8 of \cite{VDZ}.

Assume that $A$ is a finite dimensional weak Hopf algebra. From Theorem 3.16 in \cite{BNS} we know that  there exist faithful cointegrals in $A$ if and only if $A$ is Frobenius or $\widehat{A}$ is Frobenius. Moreover, for any infinite dimensional weak Hopf algebra $A$, we find that if $A$ has a faithful cointegral then $A$ must be a   Frobenius algebra. We also present another method to study  cointegrals in weak Hopf algebra in this paper.  Note that the faithful cointegral is called non-degenerate integral in the paper \cite{BNS}.\\

In paper \cite{VD4}, Van Daele investigated the separability idempotents for the algebras that are not finite dimensional and are not unital. One of examples is the discrete quantum group. By this he means a multiplier Hopf algebra with a cointegral $h$ so that $\v(h)=1$ where $\v$ is the counit. Then $\D(h)$ is a regular separability idempotent in $M(A\o A)$. Now we can generalize it to weak multiplier Hopf algebra. First let us recall the definition of separability idempotent.

 Let  $B$ and $C$ be two non-degenerate algebras and $E$ be an idempotent element in $M(B\o C)$. We always require  that
 $$
 E(1\o c)\in B\o C \quad \text{and} \quad (b\o 1)E\in B\o C
 $$
 for all $b\in B$ and $c\in C$. $E$ is regular if we also require $(1\o c)E\in B\o c$ and $E(b\o 1)\in B\o C$.

 By the left leg and the right leg of $E$ we mean the smallest subspaces $V$ of $B$ and $W$ of $C$ satisfying
 $$
 E(1\o c)\in V\o C \quad \text{and} \quad (b\o 1)E\in B\o W
 $$
 for all $c\in C$ and $b\in B$. $E$ is called full if the left leg and right leg of $E$ are all of $B$ and $C$ respectively.

\begin{definition}
Let $E$ be a full idempotent in $M(B\o C)$. If there are non-degenerate anti-homomorphisms $S_B: B\ra M(C)$ and $S_C: C\ra M(B)$ such that
$$
E(b\o 1)=E(1\o S_B(b)) \quad \text{and} \quad (1\o c)E=(S_C(c)\o 1)E
$$
for all $b\in B$ and $c\in C$. Then we call $E$ a separability idempotent in $M(B\o C)$.
\end{definition}

\begin{proposition}
Let $A$ be a regular weak multiplier Hopf algebra with a faithful cointegral $h$. If $h$ is idempotent, then $\D(h)$ is a regular separability idempotent in $M(A\o A)$.
\end{proposition}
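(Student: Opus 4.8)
The plan is to verify the definition of a regular separability idempotent for the element $E'=\D(h)\in M(A\o A)$ directly, using the formulas already collected in Proposition \ref{collection}. First I would check that $\D(h)$ is idempotent: since $h$ is idempotent and $\D$ is a homomorphism, $\D(h)\D(h)=\D(h^2)=\D(h)$, so this is immediate. Next I would record the four ``range'' conditions $E'(1\o a)=\D(h)(1\o a)\in A\o A$, $(a\o 1)\D(h)\in A\o A$, and (by regularity of the coproduct) $(1\o a)\D(h)\in A\o A$ and $\D(h)(a\o 1)\in A\o A$; these are exactly the conditions built into the notion of a regular coproduct, so $\D(h)$ is a regular idempotent in the required sense.

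The heart of the proof is to produce the two anti-homomorphisms $S_B, S_C\colon A\ra M(A)$ and to check fullness. For the ``twisting'' identities, formula (8) of Proposition \ref{collection}, namely $\D(h)(a\o 1)=\D(h)(1\o S'_2(a))$, gives $E'(a\o 1)=E'(1\o S'_2(a))$, so I would set $S_B=S'_2$; and formula (7), $(a\o 1)\D(h)=(1\o S^{-1}(a))\D(h)$, rewrites (after flipping legs / using regularity) as $(1\o a)\D(h) = (S(a)\o 1)\D(h)$, which is formula (5), giving $(1\o c)E'=(S_C(c)\o 1)E'$ with $S_C=S$. I would then note that $S$ is a non-degenerate anti-homomorphism by the general theory of regular weak multiplier Hopf algebras recalled in Section 2, and that $S'_2(a)=\sum\g_s(a_{(1)})S(a_{(2)})$ is an anti-homomorphism because $S$ is an anti-homomorphism and $\g_s$ is multiplicative in the appropriate sense (using $h S'_2(a)=h\v_s(a)$ together with faithfulness of $h$); one should also check $S'_2$ is non-degenerate, which follows from $\v_s(A)$ being a right ideal generating $A_s$ and from faithfulness of $h$.

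Finally I would verify fullness of $E'=\D(h)$: the left leg of $\D(H)$ equals $A$ and the right leg of $\D(H)$ equals $A$ by the hypothesis that $h$ is a faithful cointegral (this is exactly the content of Proposition \ref{faithful} specialized to a single faithful cointegral, as remarked after the definition of faithful set of cointegrals), so the left and right legs of $\D(h)$ are all of $A$. Assembling the idempotency, the regularity/range conditions, the two anti-homomorphism identities, and fullness yields that $\D(h)$ is a regular separability idempotent in $M(A\o A)$.

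The main obstacle I anticipate is the bookkeeping around $S'_2$: one must be careful that $S'_2$ is genuinely a well-defined non-degenerate anti-homomorphism $A\ra M(A)$ (not merely a linear map), since the separability-idempotent definition demands that, and this requires combining the faithfulness of $h$, the identity $hS'_2(a)=h\v_s(a)$, and the multiplicative behaviour of $\g_s$ on $A_s$ established earlier in the section. The twisting identities and idempotency are essentially free given Proposition \ref{collection}; it is the structural properties of the twisting maps that need the most care.
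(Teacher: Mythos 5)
Your overall strategy is the same as the paper's: verify the definition of a regular separability idempotent directly for $\D(h)$, reading the twisting identities off Proposition \ref{collection} and getting fullness from the faithfulness of $h$. The idempotency of $\D(h)$, the range/regularity conditions, and the fullness step are all fine. But there is a genuine gap at exactly the point you flag as the main obstacle: you set $S_B=S'_2$ and claim $S'_2$ is a non-degenerate anti-homomorphism because ``$\g_s$ is multiplicative in the appropriate sense.'' The paper establishes only $\g_s(ay)=\g_s(a)y$ for $y\in A_s$; the functional equation $hS'_2(a)=h\v_s(a)$ does not make $S'_2$ anti-multiplicative, since $\v_s$ is neither a homomorphism nor an anti-homomorphism on all of $A$ in general. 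As written, this step does not go through, and the definition of a separability idempotent genuinely requires $S_B$ to be a non-degenerate anti-homomorphism.

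The repair is precisely the hypothesis you only use for the trivial computation $\D(h)^2=\D(h)$. Since $h$ is a left cointegral, $h^2=\v_t(h)h$, so idempotency gives $(1-\v_t(h))h=0$; faithfulness of $h$ then forces $\v_t(h)=1$, and similarly $\v_s(h)=1$, so $h$ is also a right cointegral. Consequently $\g_s=\v_s$ and
$$S'_2(a)=\sum \v_s(a_{(1)})S(a_{(2)})=\sum S(a_{(1)})a_{(2)}S(a_{(3)})=S(a),$$
so both twisting maps collapse to the antipode, which is already known to be a non-degenerate anti-homomorphism (and bijective, since $A$ is regular). This is what the paper does: it notes that idempotency makes $h$ a two-sided cointegral and that $S$ serves as both $S_B$ and $S_C$. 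With that substitution your argument is complete and coincides with the paper's.
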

\begin{proof}
Since we assume $h$ is idempotent, then $\v_t(h)=1$ and $\v_s(h)=1$, so $h$ is also a right cointegral. In general the target and source algebras do not contain the unit $1$, but we do have some examples which contain 1. Now it follows that $\D(h)$ is an idempotent in $M(A\o A)$. From the theory in this paper, we know that $\D(h)$ is full and the antipode $S$ gives the non-degenerate anti-homomorphisms $S_B$ and $S_C$. Because $A$ is regular, then we have a regular separability idempotent.
\end{proof}

 Let $E$ be a separability idempotent in $ M(B\o C)$, then we can make $A=C\o B$ into a weak multiplier Hopf algebra. If $E$ is regular, then $A$ is regular. The construction is given by Van Daele in \cite{VD4,VD5,VW4}. Now we will investigate the cointegrals in this example.

\begin{example}
Let $A$ be the regular weak multiplier Hopf algebra associated with a regular separability idempotent $E$. $\vp_B$ and $\vp_C$ are the distinguished linear functionals on $B$ and $C$. The coproduct is given by $\D(c\o b)=c\o E\o b$ for any $c\in C, b\in B$. The counit $\v(c\o b)=\vp_B(S_C(c)b)=\vp_C(cS_B(b))$. The source and target maps are $\v_s(c\o b)=1\o S_c(c)b$ and $\v_t(c\o b)=cS_B(b)\o 1$. Then $c\o b$ is a cointegral if and only if $(c'c\o b'b)=c'S_B(b')c\o b$ for all $c'\in C, b'\in B$.

We can say more about the dual weak multiplier Hopf algebra $\widehat{A}$. Note that we can look at $A$ as the algebra spanned by elements of the form $cb$ and $cb=bc$. Then $\widehat{A}$ can be identified with the space $B\o C$ via the pairing
$$\<cb, u\o v\>=\vp_B(bS_C(v))\vp_C(S_B(u)c),$$
for any $u\in B, v\in C$. And we will use $u\diamond v$ for $u\o v$ when it is an element in $\widehat{A}$. Now we have
$(u\o v)(u'\o v')=\v(vu')u\diamond v'$
and
\begin{eqnarray*}
\widehat{\v_t}(u\diamond v)(u'\diamond v') &=&\vp_C(v)\v(E_2u')uE_1\diamond v'\\
                                           &=& \vp_C(v)uu'\diamond v'.
\end{eqnarray*}
In the second equality we use the formula $(E_1\diamond v)\v(E_2u)=u\diamond v$ which was proved in the Remark 3.4 of \cite{VD5}.
 If $u'\o v'$ is a left cointegral in $B\o C$, then $\v(vu')u\diamond v'=\vp_C(v)uu'\diamond v'$ for any $u, v$. But this is not possible, so there is no cointegral in $\widehat{A}$.
\end{example}

\begin{center}
 {\bf Acknowledgement}
\end{center}

I would like to thank Alfons Van Daele for providing  material of his work on the subject and for discussions about this work.

The work was partially supported by the NNSF of China (No. 11601231), the Fundamental Research Fund for the Central Universities (No. KJQN201716).

\end{document}